\newtheorem{theorem}{Theorem}[section]
\newtheorem{lemma}[theorem]{Lemma}
\newtheorem{corollary}[theorem]{Corollary}
\numberwithin{equation}{section}
\numberwithin{figure}{section}
\renewcommand{\geq}{\geqslant}
\renewcommand{\leq}{\leqslant}
\renewcommand\section{\@startsection {section}{1}{\z@}%
                                   {-3.5ex \@plus -1ex \@minus -.2ex}%
                                   {1.3ex \@plus.2ex}%
                                   {\normalfont\large\scshape}}
\title{ \vspace{-5ex}\bf \Large Geodesic Rosen continued fractions}
\author{Ian Short and Mairi Walker\footnote{Department of Mathematics and Statistics, The Open University, Milton Keynes, MK7 6AA, United Kingdom 17~April~2015}}
\date{\vspace{-5ex}}
\begin{document}

\maketitle

\begin{abstract}
We describe how to represent Rosen continued fractions by paths in a class of graphs that arise naturally in hyperbolic geometry. This representation gives insight into Rosen's original work about words in Hecke groups, and it also helps us to identify  Rosen continued fraction expansions of shortest length.
\end{abstract}

\section{Introduction}\label{section 1}

In 1954, Rosen \cite{Ro1954} introduced a class of continued fractions now known as \emph{Rosen continued fractions} in order to study Hecke groups. Since then a rich literature on Rosen continued fractions has developed, including works on Diophantine approximation \cite{Le1985,Na2010,Ro1954}, the metrical theory of Rosen continued fractions \cite{BuKrSc2000,DaKrSt2009,KrScSm2010}, dynamics and geometry on surfaces associated to Hecke groups \cite{ArSc2013,MaMu2010,MaMuSt2012,MaSt2008,RoSc1992,ScSh1995}, and most recently on transcendence results for Rosen continued fractions \cite{BuHuSc2013}. This list of subjects and citations is by no means exhaustive. Here we describe how to represent Rosen continued fractions by paths in certain graphs of infinite valency that arise naturally in hyperbolic geometry. This perspective sheds light on Rosen's work, and allows us to tackle problems about the length of Rosen continued fractions, in a similar manner to the approach for integer continued fractions found in \cite{BeHoSh2012}.

Let us first introduce Hecke groups and discuss some of their geometric properties, before defining Rosen continued fractions. For a more detailed exposition of Hecke groups, the reader should consult \cite{BeKn2008,Ev1973,Le1964,ScSh1997}. Given an integer $q\geq 3$, the \emph{Hecke group}\, $G_q$ is the Fuchsian group generated by the M\"obius transformations
\[
\sigma(z)=-\frac{1}{z}\quad\text{and}\quad \tau_q(z)=z+\lambda_q,
\]
where $\lambda_q=2\cos (\pi/q)$. (From now on, for simplicity, we omit the subscript $q$ from $\tau_q$ and other elements of $G_q$ that depend on $q$.) The generators $\sigma$ and $\tau$ satisfy the relations $\sigma^2=(\tau\sigma)^q=I$, where $I$ is the identity transformation, and all other relations in $\sigma$ and $\tau$ are consequences of these two. It follows that $G_q$ is isomorphic as a group to the free product of cyclic groups $C_2*C_q$. Of particular importance among the Hecke groups is $G_3$, which is the \emph{modular group}, that is, the group of M\"obius transformations $z\mapsto (az+b)/(cz+d)$, where $a$, $b$, $c$, and $d$ are integers and $ad-bc=1$.

It is often more convenient to work with an alternative pair of generators of $G_q$, namely $\tau$ and $\rho$, where
\[
\rho(z)=\tau\sigma(z)=\lambda_q-\frac{1}{z}.
\]
The map $\rho$ is an elliptic M\"obius transformation of order $q$ with fixed points $e^{i\pi/q}$ and $e^{-i\pi/q}$. In the upper half-plane $\mathbb{H}$, which is a standard model of the hyperbolic plane, the hyperbolic quadrilateral $D$ that has vertices $i$, $e^{i\pi/q}$, $\lambda_q+i$, and $\infty$ (an ideal vertex) is a fundamental domain for $G_q$, with side-pairing transformations $\tau$ and $\rho$. The quadrilateral $D$, with $q=5$, is shown in Figure~\ref{figure 1}(a).

\begin{figure}[ht]
\centering
\includegraphics{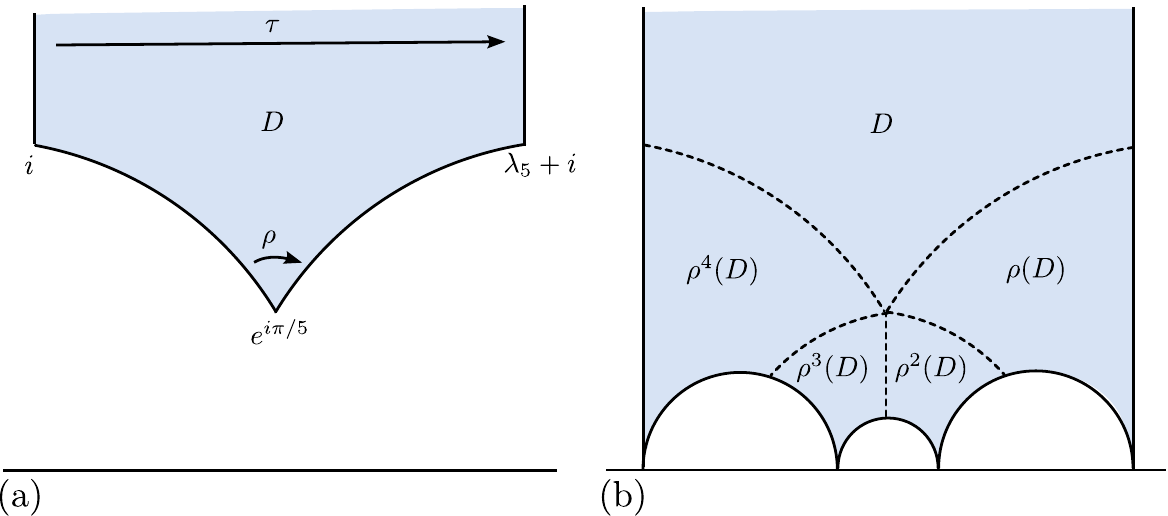}
\caption{Fundamental domains for $G_5$ (left) and $\Gamma_5$ (right)}
\label{figure 1}
\end{figure}

Let $\Gamma_q$ denote the group generated by the involutions $\rho^i \sigma \rho^{-i}$, $i=0,\dots,q-1$, which is a normal subgroup of $G_q$ of index $q$. A fundamental domain $E$ for $\Gamma_q$ is given by $E=\bigcup_{i=0}^{q-1}\rho^i(D)$, as shown in Figure~\ref{figure 1}(b). This fundamental domain is an ideal hyperbolic $q$-gon and its images under $\Gamma_q$ tessellate the hyperbolic plane by ideal hyperbolic $q$-gons. The skeleton of this tessellation is a connected plane graph, which we call a \emph{Farey graph}, and denote by $\mathcal{F}_q$. The vertices of $\mathcal{F}_q$ are the ideal vertices of the tessellation; they all belong to the ideal boundary $\mathbb{R}\cup\{\infty\}$ of $\mathbb{H}$, and in fact they form a countable, dense  subset of $\mathbb{R}\cup\{\infty\}$. They are the full collection of parabolic fixed points of $G_q$. The edges of $\mathcal{F}_q$ are the sides of the ideal $q$-gons in the tessellation, and the faces of $\mathcal{F}_q$ are the ideal  $q$-gons themselves. Part of $\mathcal{F}_5$ is shown in Figure~\ref{figure 2}.
% note: by definition, the faces are open, as they are images of the fundamental domain

\begin{figure}[ht]
\centering
\includegraphics{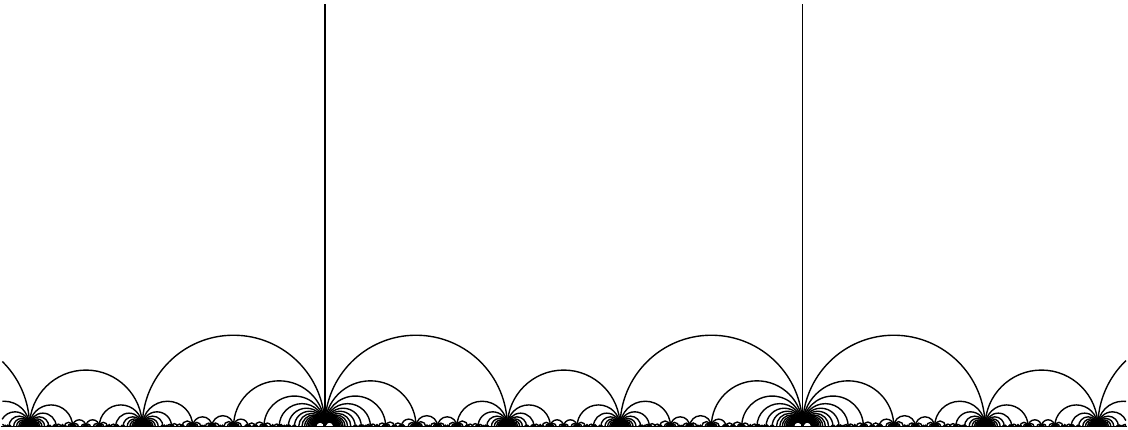}
\caption{The Farey graph $\mathcal{F}_5$}
\label{figure 2}
\end{figure}

The term ``Farey graph'' is motivated by the $q=3$ case because the graph $\mathcal{F}_3$ is often given that name. The graph $\mathcal{F}_3$ is the skeleton of a tessellation of the hyperbolic plane by ideal triangles, the vertices of which are the rational numbers and $\infty$. It has been used already to study continued fractions, in works such as \cite{BeHoSh2012,KaUg2007,Mo1982,Se1985} and \cite[Chapter~19]{Sc2011}.  The Farey graphs (for all values of $q$) also arise in subjects involving hyperbolic geometry that are not directly related to continued fractions; for example, they form a class of universal objects in the theory of maps on surfaces (see \cite{IvSi2005,JoSiWi1991,Si1988}).

Rosen \cite{Ro1954} observed that words in the generators $\sigma$ and $\tau$ of $G_q$ give rise to continued fractions of a particular type, which can be used to study $G_q$. These continued fractions, which are now known as  \emph{Rosen continued fractions}, are expressions of the form
\[
b_1\lambda_q +\cfrac{-1}{b_2\lambda_q
          + \cfrac{-1}{\raisebox{-1ex}{$b_3\lambda_q+\dotsb$}}}\,,
\]
where the entries $b_i$ are integers. This Rosen continued fraction is said to be either \emph{infinite} or \emph{finite} depending on whether the sequence $b_1,b_2,\dotsc$ is infinite or finite, respectively. We denote it by $[b_1,b_2,\dotsc]_q$ in the former case, and by $[b_1,\dots,b_n]_q$ in the latter case. The \emph{value} of a finite Rosen continued fraction $[b_1,\dots,b_n]_q$ is the number that you obtain by evaluating that expression. The \emph{convergents} of a finite or infinite Rosen continued fraction are the values of $[b_1,\dots,b_m]_q$ for $m=1,2,\dotsc$. If the sequence of convergents of an infinite Rosen continued fraction converges to a point $x$ in $\mathbb{R}\cup\{\infty\}$, then we say that the continued fraction \emph{converges} and has \emph{value} $x$. Sometimes we abuse notation and write $[b_1,b_2,\dotsc]_q$ and $[b_1,\dots,b_n]_q$ for the values of the continued fractions (rather than the formal expressions that they are supposed to represent).  There are various other definitions of Rosen continued fractions in the literature, all similar to our definition, and some equivalent. When $q=3$ we see that $\lambda_q=1$, so the corresponding Rosen continued fractions have integer coefficients `along the bottom''. 

The main purpose of this paper is to describe a correspondence between paths in Farey graphs and Rosen continued fractions. In more detail, consider a path in $\mathcal{F}_q$ that starts at $\infty$ and ends at some vertex $y$, such as the path shown in Figure~\ref{figure 3}. %
\begin{figure}[ht]
\centering
\includegraphics{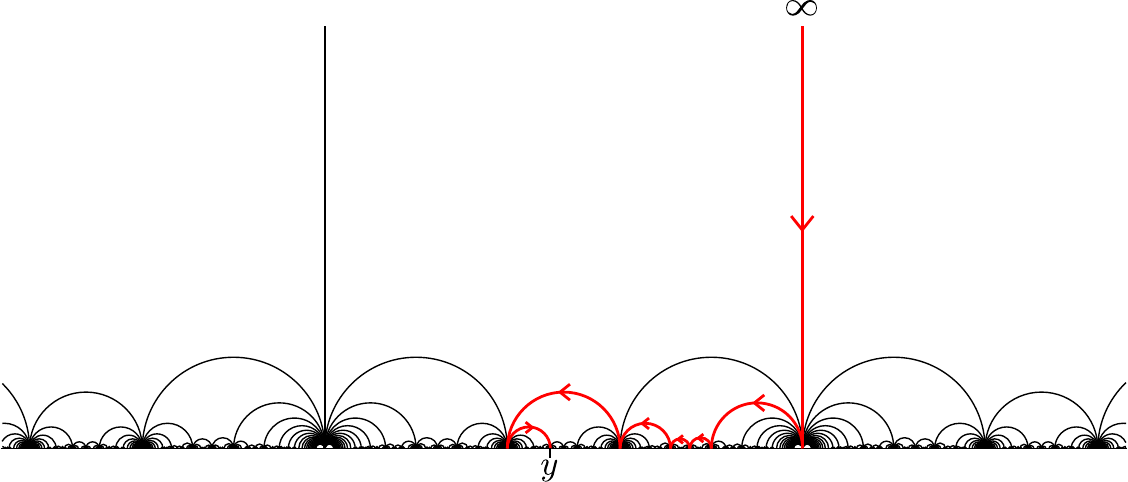}
\caption{A path in $\mathcal{F}_5$ from $\infty$ to a vertex $y$}
\label{figure 3}
\end{figure}%
Let the vertices of the path be $\infty,v_1,v_2,\dots,v_n=y$, in that order. We will show (Theorem~\ref{theorem 4}) that there is a unique finite Rosen continued fraction whose convergents are the vertices $v_1,\dots,v_n$. Some of Rosen's techniques from \cite{Ro1954} can be explained geometrically using this correspondence, as we will see later.  We also establish some results about ``shortest'' Rosen continued fractions, with an approach similar to that found in \cite{BeHoSh2012} for integer continued fractions. We now discuss these results.

 A Rosen continued fraction \emph{expansion} of a number $x$ is a Rosen continued fraction with value $x$. Each real number has infinitely many Rosen continued fraction expansions. For example, in the familiar case $q=3$ in which  Rosen continued fractions have integer coefficients, there are numerous algorithms that give rise to different continued fraction expansions. The most well known of these is Euclid's algorithm, which gives the regular continued fraction expansion with positive integer coefficients. A similar algorithm is the nearest-integer algorithm (which we will discuss in detail in Section~\ref{section 3}). It has long been known (see \cite[page 168]{Pe1950}) that among all integer continued fraction expansions of a rational number, the one arising from the nearest-integer algorithm has the least number of terms.

Rosen observed that there is a version of the nearest-integer algorithm that gives rise to Rosen continued fraction expansions. Our first result says (in part) that applying this algorithm to a vertex $y$ of $\mathcal{F}_q$ gives a Rosen continued fraction expansion of $y$ with the least possible number of terms. We say that a finite Rosen continued fraction  $[b_1,\dots,b_n]_q$ with value  $y$ is a \emph{geodesic} Rosen continued fraction if every other Rosen continued fraction expansion of $y$ has at least $n$ terms. We use the phrase ``geodesic'' because $[b_1,\dots,b_n]_q$ corresponds to a path in $\mathcal{F}_q$ between $\infty$ and $y$ with the least possible number of edges (a geodesic path). We say that an infinite Rosen continued fraction $[b_1,b_2,\dotsc]_q$ is a \emph{geodesic} Rosen continued fraction if $[b_1,\dots,b_m]_q$ is a geodesic Rosen continued fraction for $m=1,2,\dotsc$.

\begin{theorem}\label{theorem 1}
For each integer $q\geq 3$, the nearest-integer algorithm applied to any real number gives rise to a geodesic Rosen continued fraction.
\end{theorem}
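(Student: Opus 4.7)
The strategy is to use Theorem~\ref{theorem 4} to translate the statement into a claim about shortest paths in the Farey graph. Under that dictionary a Rosen continued fraction expansion of a vertex $y$ of $\mathcal{F}_q$ is geodesic precisely when the associated path from $\infty$ to $y$ realises the graph distance $d_{\mathcal{F}_q}(\infty,y)$. Since infinite expansions are declared geodesic whenever all their truncations are, it suffices to treat the finite case and show that if $[b_1,\dots,b_n]_q$ is the nearest-integer expansion of a vertex $y$ of $\mathcal{F}_q$, then $d_{\mathcal{F}_q}(\infty,y)=n$.

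I would prove this by induction on $n$. A preliminary observation, which I would establish first by inspecting the $G_q$-orbit of the fundamental domain $E$, is that the neighbours of $\infty$ in $\mathcal{F}_q$ are exactly the vertices $\{c\lambda_q:c\in\mathbb{Z}\}$; this handles the case $n=1$ directly. For the inductive step let $b_1\lambda_q$ be the neighbour of $\infty$ closest to $y$ on the real line, and set $T=\sigma\tau^{-b_1}$. As an element of $G_q$, $T$ permutes the tessellation and so acts as an automorphism of $\mathcal{F}_q$; by direct computation it sends $b_1\lambda_q\mapsto\infty$ and $y\mapsto y_1$, where $y_1=1/(b_1\lambda_q-y)$ is the value of the tail $[b_2,\dots,b_n]_q$. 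By construction this tail is itself the nearest-integer expansion of $y_1$, so the inductive hypothesis gives $d_{\mathcal{F}_q}(\infty,y_1)=n-1$. Transporting back through $T^{-1}$ then yields $d_{\mathcal{F}_q}(b_1\lambda_q,y)=n-1$, and prepending the edge $[\infty,b_1\lambda_q]$ gives the upper bound $d_{\mathcal{F}_q}(\infty,y)\leq n$.

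The difficult half, which I expect to be the main obstacle, is the matching lower bound: no path from $\infty$ to $y$ has fewer than $n$ edges. Equivalently, $d_{\mathcal{F}_q}(c\lambda_q,y)\geq n-1$ for every integer $c$, so that no first move to a neighbour of $\infty$ other than $b_1\lambda_q$ can outperform the nearest-integer choice. My plan is to prove this by a planarity/exchange argument in $\mathbb{H}$: the hyperbolic geodesic edges $[\infty,c\lambda_q]$ partition $\mathbb{H}$ into vertical strips, and since the tessellation is planar no edge of $\mathcal{F}_q$ crosses any of these walls, so a path of length $m$ from $\infty$ to $y$ that first moves to $c\lambda_q$ with $c\neq b_1$ must re-enter the strip containing $y$ through either $b_1\lambda_q$, $(b_1\pm 1)\lambda_q$, or some wall-endpoint at $\infty$. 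A careful case analysis, using the action of $\tau$ to symmetrise among the strips, then shows that such a path may be modified without increasing its length so that its first edge is $[\infty,b_1\lambda_q]$, after which $T$ and the inductive hypothesis force $m\geq n$. Making this exchange fully rigorous—particularly when the path wanders out of and re-enters the target strip many times—is where the technical weight of the proof will lie.
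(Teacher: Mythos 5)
Your overall strategy---translate to paths via Theorem~\ref{theorem 4}, handle the upper bound by conjugating the tail back to $\infty$ with an element of $G_q$ and inducting, and isolate the lower bound as the hard step---matches the architecture of the paper's proof, which also reduces everything to the statement that the nearest-integer first step never increases the distance to $y$. The upper-bound half of your induction is fine. The gap is in the lower bound, and it is a genuine one: your proposed tool (the vertical walls $[\infty,c\lambda_q]$, which no edge of $\mathcal{F}_q$ crosses) only proves that every path from $\infty$ to $y$ must pass through one of the \emph{two} multiples of $\lambda_q$ bracketing $y$, say $k\lambda_q$ and $(k+1)\lambda_q$. That is the easy half (it is Theorem~\ref{theorem 6} of the paper, specialised to $x=\infty$). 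It does not address the actual difficulty, which is to show that the \emph{nearer} of these two bracketing vertices satisfies $d_q(b_1\lambda_q,y)=d_q(\infty,y)-1$, i.e.\ that a path through the farther bracket can never be strictly shorter. Your walls cannot see this, because both brackets lie on the boundary of the same strip; the comparison between them has to happen \emph{inside} the face of $\mathcal{F}_q$ incident to $\infty$, $k\lambda_q$ and $(k+1)\lambda_q$, and you have not said what forces the exchange there.

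The missing idea (Lemma~\ref{lemma 3} and Lemma~\ref{lemma 5} in the paper) is the vertex of that face opposite $\infty$, which an explicit computation of the face's vertices places at $w=(k+1/2)\lambda_q$; this is exactly why ``nearest integer multiple'' is the right choice. If a geodesic leaves $\infty$ through the farther bracket $\beta$, then since $y$ lies on the opposite side of $\{\infty,w\}$ from $\beta$, a further separation argument (Lemma~\ref{lemma 6} applied to the pair $\{\infty,w\}$ rather than to your walls) forces the geodesic to pass through $w$, and from $w$ the two brackets are equidistant along the face (Corollary~\ref{corollary 3}); hence the nearer bracket is also at distance $d_q(\infty,y)-1$. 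When $q$ is odd the face has an opposite \emph{edge} rather than an opposite vertex, and one must adjoin the adjacent face across that edge to form a $2q$-gon before running the same argument---a case distinction your sketch does not mention. Without this mechanism, the assertion that a wandering path ``may be modified without increasing its length so that its first edge is $[\infty,b_1\lambda_q]$'' is precisely the statement to be proved, not a consequence of planarity alone.
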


Our next theorem gives bounds on the maximum number of finite geodesic Rosen continued fraction expansions of a vertex of $\mathcal{F}_q$. To explain the notation of this theorem, we first introduce informally a concept that will later be made precise. Given any vertex $y$ of $\mathcal{F}_q$, we can ``shade in'' each of the faces of $\mathcal{F}_q$ that separates $\infty$ from $y$, as shown in Figure~\ref{figure 7}. %
\begin{figure}[ht]
\centering
\includegraphics{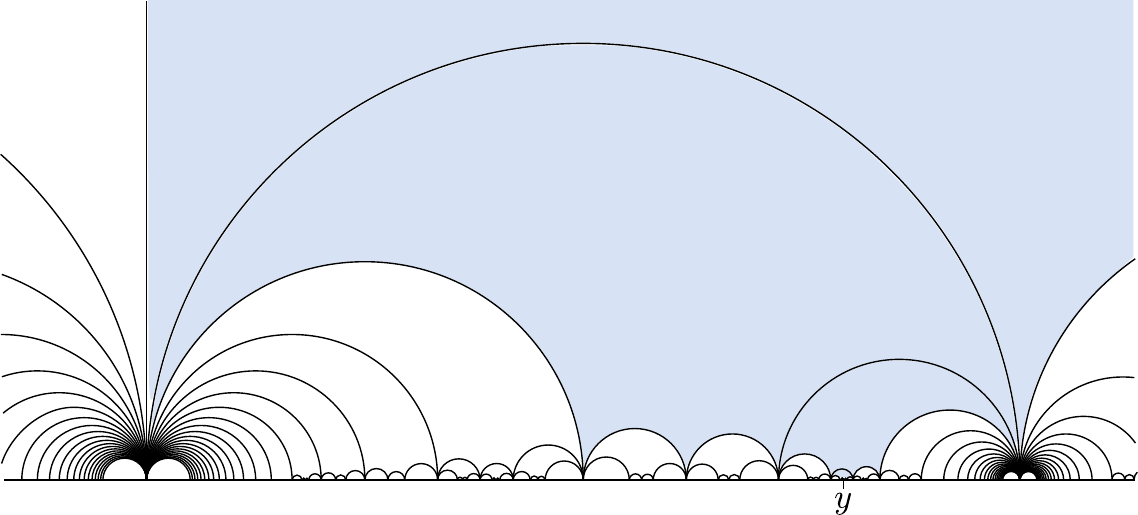}
\caption{The ideal $q$-gons that separate $\infty$ and $y$ are shaded}
\label{figure 7}
\end{figure}%
This results in a chain of $q$-gons, with $\infty$ a vertex of the first $q$-gon and $y$ a vertex of the last $q$-gon. There are other ways of describing the sequence of $q$-gons that arises in the chain; for instance, they are exactly those $q$-gons in the tessellation that intersect the vertical hyperbolic line between $\infty$ and~$y$.

It is often clearer to represent the chain of $q$-gons by polygons of a similar Euclidean size, such as those shown in Figure~\ref{figure 6} (the first four $q$-gons of this chain match those of Figure~\ref{figure 7}).

\begin{figure}[ht]
\centering
\includegraphics{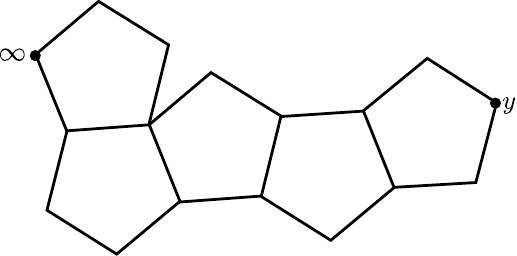}
\caption{A chain of $5$-gons connecting $\infty$ and $y$}
\label{figure 6}
\end{figure}

We define $D(\infty,y)$ to be the number of $q$-gons in this chain. We also define $F_n$ to be the $n$th term of the Fibonacci sequence, which is given by $F_0=1$, $F_1=2$, $F_2$=3, $F_3=5$, and so forth (note the unusual choice of indices).

\begin{theorem}\label{theorem 2}
Suppose that $y$ is a vertex of $\mathcal{F}_q$, and $D(\infty,y)=n$. Then there are at most $F_{n}$ finite geodesic Rosen continued fraction expansions of $y$.
\end{theorem}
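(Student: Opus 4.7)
The plan is to convert Theorem~\ref{theorem 2} into a counting problem about shortest paths in the Farey graph $\mathcal{F}_q$, and then establish a Fibonacci recursion by induction on $n = D(\infty, y)$.

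First, I would invoke Theorem~\ref{theorem 4} to translate the problem: finite Rosen continued fraction expansions of $y$ correspond bijectively to paths in $\mathcal{F}_q$ from $\infty$ to $y$, and under this correspondence geodesic expansions are precisely shortest paths. So it suffices to bound the number of shortest paths from $\infty$ to $y$ in $\mathcal{F}_q$ by $F_n$.

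Second, I would prove a localization lemma asserting that every shortest path from $\infty$ to $y$ uses only edges of the chain polygons $P_1,\dots,P_n$. The argument is geometric: the chain consists of exactly those $q$-gons crossed by the vertical hyperbolic segment from $\infty$ to $y$, and because distinct edges of $\mathcal{F}_q$ never cross in $\mathbb{H}$, any path using an edge outside the chain would be forced to leave and then re-enter the strip swept out by the chain, strictly increasing its edge-count. In particular the first vertex $v_1$ of any shortest path is a neighbor of $\infty$ inside $P_1$, so there are at most two candidates for $v_1$.

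Third, I would set up induction on $n$. Let $g(n)$ denote the supremum, over configurations $(v,y)$ with $D(v,y) = n$, of the number of shortest paths from $v$ to $y$ in $\mathcal{F}_q$. The base cases $g(1) = 1 = F_0$ (the only shortest path is the single edge) and $g(2) \leq 2 = F_1$ (at most two length-two paths, one through each vertex of $e_1$) are immediate. For the inductive step I would partition the shortest paths according to whether $v_1$ lies on the shared edge $e_1 = P_1 \cap P_2$. If $v_1 \in e_1$, then $v_1 \in P_2$ and the chain from $v_1$ to $y$ has length $n-1$, so the tail of the path is controlled by $g(n-1)$. If $v_1 \notin e_1$, then a local analysis of the configuration of $P_1, P_2, P_3$ around $v_1$ shows that the shortest continuation from $v_1$ must bypass $P_2$ and reach $P_3$ in exactly one more edge, so the residual chain has length at most $n-2$ and the tail is controlled by $g(n-2)$. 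Combining these contributions with the Fibonacci recursion $F_n = F_{n-1} + F_{n-2}$ yields $g(n) \leq F_n$.

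The main obstacle is making the second branch of the inductive step rigorous. When $q \geq 5$ it can happen that neither neighbor of $\infty$ in $P_1$ lies on $e_1$, and one must analyse the configuration of $P_1, P_2, P_3$ around the non-shared neighbor and verify that any shortest continuation reaches $P_3$ in precisely one extra edge. A secondary subtlety is ensuring that the two possible choices of $v_1$ do not inflate the bound: this is where the dichotomy, one branch reducing the chain length by one and the other by two, is essential, together with the observation that each branch contributes its respective inductive bound exactly once. The modular case $q = 3$ is simpler because the two neighbors of $\infty$ in $P_1$ automatically lie on $e_1$, so only the first branch occurs.
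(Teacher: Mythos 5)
Your high-level strategy---translate to counting geodesic paths via Theorem~\ref{theorem 4}, localize geodesics to the chain $P_1,\dots,P_n$, and induct on $n$ to extract a Fibonacci recursion---is exactly the paper's (Theorems~\ref{theorem 9} and~\ref{theorem 10}). The genuine gap is in your inductive step: partitioning the geodesic paths according to whether the \emph{first} vertex $v_1$ lies on the shared edge $e_1=\{a_1,b_1\}$ does not produce the recursion $F_n=F_{n-1}+F_{n-2}$. In the branch $v_1\notin e_1$, the claim that the continuation ``must bypass $P_2$ and reach $P_3$ in exactly one more edge'' is false: take $q=10$ with $e_1$ the edge of $P_1$ opposite $\infty$; from $v_1$ the path still needs several further edges inside $P_1$ before it even reaches $e_1$, and the $q$-chain from $v_1$ to $y$ is still $P_1,\dots,P_n$, so $D(v_1,y)=n$ and no reduction occurs---your recursion degenerates to $g(n)\le g(n-1)+g(n)$. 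The branch $v_1\in e_1$ also overcounts: for $q=3$ both neighbours of $\infty$ in $P_1$ lie on $e_1$, so that branch alone contributes $2g(n-1)=2F_{n-1}>F_n$.

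The decomposition that works (and that the paper uses) is by which way around $P_1$ the path travels, not by its first vertex. With $\mu_1$, $\nu_1$ the two arcs of $P_1$ from $x$ to $a_1$ and to $b_1$, and $|\mu_1|\le|\nu_1|$: if $|\mu_1|<|\nu_1|-1$ then every geodesic uses $\mu_1$ and $N(x,y)=N(a_1,y)\le F_{n-1}$; otherwise the paths using $\mu_1$ number at most $N(a_1,y)\le F_{n-1}$, while a geodesic using $\nu_1$ cannot afterwards pass through $a_1$ and is therefore forced to continue around $P_2$ all the way to $b_2$, a vertex \emph{two} polygons deep with $D(b_2,y)=n-2$, contributing at most $N(b_2,y)\le F_{n-2}$. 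It is this forcing of the initial segment down to $b_2$, rather than any one-edge jump to $P_3$, that yields the $F_{n-2}$ term. Separately, your localization argument (``edges do not cross'') really only shows that every path meets $\{a_i,b_i\}$ for each $i$ (Lemma~\ref{lemma 6}); confining a geodesic to the chain itself requires the $y$-parent machinery of Corollary~\ref{corollary 1} and Lemma~\ref{lemma 4}.
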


We prove this theorem in Section~\ref{section 6}, and show that the bound $F_{n}$ can be attained when $n$ is even. For odd values of $n$, we will give better bounds than~$F_{n}$.

Our third theorem  gives necessary and sufficient conditions for $[b_1,\dots,b_n]_q$ to be a geodesic Rosen continued fraction. For now we only state a result in which $q$ is even; a similar if slightly more complicated theorem when $q$ is odd is given in Section~\ref{section 7a}. To  formulate our result concisely, we use the notation $1^{[d]}$ to mean the sequence consisting of $d$ consecutive $1$s. Also, given a sequence $x_1,\dots,x_n$, we write $\pm (x_1,\dots,x_n)$ to mean one of the two sequences $x_1,\dots,x_n$ or $-x_1,\dots,-x_n$.

\begin{theorem}\label{theorem 3}
Suppose that $q=2r$, where $r\geq 2$. The continued fraction $[b_1,\dots,b_n]_q$ is a geodesic Rosen continued fraction if and only if  the sequence $b_2,\dots,b_n$ has no terms equal to $0$ and contains no subsequence of consecutive terms either of the form $\pm  1^{[r]}$ or of the form
\[
\pm (1^{[r-1]},2,1^{[r-2]},2,1^{[r-2]},\dots,1^{[r-2]},2,1^{[r-1]}).
\]
\end{theorem}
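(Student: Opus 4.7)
The plan is to translate the statement into a question about geodesic paths in $\mathcal{F}_q$ using Theorem~\ref{theorem 4}. If $\infty = v_0, v_1, \dots, v_n$ denotes the path of convergents of $[b_1,\dots,b_n]_q$, then $[b_1,\dots,b_n]_q$ is a geodesic Rosen continued fraction exactly when this path is a shortest path from $\infty$ to $v_n$ in $\mathcal{F}_q$. The task then becomes translating minimality of the path into an avoidance condition on the coefficients.

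The first step is a local dictionary between coefficients and geometry at each interior vertex $v_i$. The incoming edge $[v_{i-1},v_i]$ lies in two $q$-gon faces of $\mathcal{F}_q$, and $b_{i+1}$ records how we turn at $v_i$ relative to $v_{i-1}$: $b_{i+1}=0$ means $v_{i+1}=v_{i-1}$; $b_{i+1}=\pm 1$ means that the incoming and outgoing edges are adjacent edges of one of these two common faces, with sign selecting which one; and $|b_{i+1}|\geq 2$ means the outgoing edge lies on a face that meets $v_i$ but not $v_{i-1}$. Consequently, any run of $k$ consecutive same-sign $1$s in $b_i,\dots,b_{i+k-1}$ (with $i\geq 2$) forces the $k+1$ consecutive edges $[v_{i-2},v_{i-1}],\dots,[v_{i+k-2},v_{i+k-1}]$ to lie on a single $q$-gon face.

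With this dictionary, the ``only if'' direction amounts to exhibiting a strict shortcut whenever a forbidden pattern appears. A run $\pm 1^{[r]}$ uses $r+1$ of the $q=2r$ edges of a single face; replacing these by the complementary $r-1$ edges of the same face saves two steps. For the longer pattern $\pm(1^{[r-1]},2,1^{[r-2]},2,\dots,2,1^{[r-1]})$, I would interpret each $2$ as crossing into the next face of a chain of $q$-gons, and each block of $1$s as tracing exactly half or one short of half of the boundary of a face; the resulting zigzag sweeps over slightly more than half of a chain of consecutive faces, and can be replaced by a strictly shorter subpath running along the opposite sides of the same chain, again saving exactly two steps. A backtrack $b_i=0$ for $i\geq 2$ obviously shortens the path.

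For the converse (``if'') direction, I would induct on $n$, trimming any matched prefix that a hypothetical shorter path shares with ours. Using $D(\infty, v_n)$ as a bookkeeping device, absence of the first forbidden pattern forces the path to use at most $r$ edges of each face it meets, and absence of the longer pattern rules out the staircase configurations in which the near-half traversals of several consecutive faces combine into a beatable route. The main obstacle is this converse direction, specifically the handling of the second pattern: the difficulty is showing that these two families of obstructions already detect every non-geodesic. I expect the cleanest route is to describe an explicit reduction algorithm that turns any non-geodesic path into a strictly shorter one whose coefficients realise one of the two forbidden patterns, and then to invert the process to conclude that any non-geodesic path must itself contain such a pattern.
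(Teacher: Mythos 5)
Your forward (``only if'') direction is essentially correct and complete in outline: the local dictionary you describe is the function the paper calls $\phi$ (Lemma~\ref{lemma 7}), and your geometric shortcuts --- replacing $r+1$ edges of a face by the complementary $r-1$ edges, and replacing the zigzag over a chain of faces by the opposite outer path --- save exactly two steps each, which is precisely what the paper encodes algebraically via the group identities ${T_1}^r=\sigma\tau^{-1}\sigma{T_{-1}}^{r-2}\tau^{-1}$ (Lemma~\ref{lemma 17}) and Lemma~\ref{lemma 16}. The paper itself remarks that this half could equally be done geometrically, so no complaint there.

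The converse is where there is a genuine gap, and you have correctly located it but not filled it. What is missing is the structural theorem that makes the ``if'' direction work: one must show that a path which does not backtrack and is not geodesic necessarily contains a \emph{minimal circuitous subpath}, i.e.\ the longer of the two outer boundary paths of the $q$-chain between its endpoints, and then classify those subpaths exactly. The paper does this in three steps that your sketch does not supply. First, any geodesic, and any minimal non-geodesic subpath, is confined to the $q$-chain between its endpoints (Lemma~\ref{lemma 4}, Theorem~\ref{theorem 9}, and the corresponding argument inside Theorem~\ref{theorem 16}); second, a path in the chain that is not an outer path must cross one of the separating edges $\{a_i,b_i\}$ and hence meets every competing path (Lemma~\ref{lemma 9}), forcing a minimal non-geodesic to be an outer path; third, the minimality hypothesis pins down the segment lengths to exactly $|\mu_1|,\dots,|\mu_m|=r,r-1,\dots,r-1,r$ (Lemmas~\ref{lemma 10}--\ref{lemma 12} and Theorem~\ref{theorem 17}), which Lemma~\ref{lemma 15} then converts into the coefficient pattern $1^{[r-1]},2,1^{[r-2]},\dots,2,1^{[r-1]}$. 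Your proposed alternative --- an explicit reduction algorithm whose inverse would exhibit the forbidden pattern --- is not obviously workable: a shortening move applied to an arbitrary non-geodesic path need not act locally on the coefficient string, so ``inverting the process'' does not by itself show that the \emph{original} coefficients contain one of the two patterns. Without the confinement-to-the-chain and minimal-circuitous-path classification (or an equivalent), the converse remains unproved.
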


The number of $2$s in the sequence above can be any positive integer.

In geometric terms, the theorem says that the path corresponding to a Rosen continued fraction is a geodesic path unless it either doubles back on itself or takes ``the long way round'' the outside of a chain of $q$-gon (see Sections~\ref{section 7a} and~\ref{section 7b}). These possibilities are illustrated when $q=4$ in Figure~\ref{figure 19}.%
\begin{figure}[ht]
\centering
\includegraphics{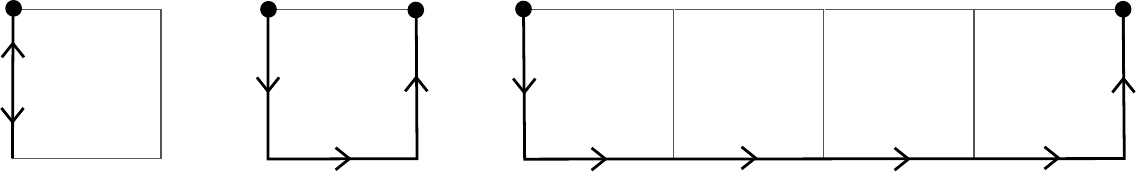}
\caption{A path in $\mathcal{F}_4$ that is not a geodesic path must contain a subpath of type similar to one of these.}
\label{figure 19}
\end{figure}

Sections~\ref{section 2} to \ref{section 7b} are about finite Rosen continued fractions, but in Section~\ref{section 8} we turn our attention to infinite Rosen continued fractions. By representing continued fractions by paths in Farey graphs we obtain the following theorem, which gives remarkably mild sufficient conditions for an infinite Rosen continued fraction to converge.

\begin{theorem}\label{theorem 13}
If the sequence of convergents of an infinite Rosen continued fraction does not contain infinitely many terms that are equal, then the continued fraction converges.
\end{theorem}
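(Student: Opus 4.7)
The plan is to use the correspondence from Theorem~\ref{theorem 4} to translate the question into one about infinite walks in $\mathcal{F}_q$, and then to argue by contradiction using the local finiteness of the ideal $q$-gon tessellation of $\mathbb{H}$. Let $v_0=\infty,v_1,v_2,\dotsc$ be the convergents of $[b_1,b_2,\dotsc]_q$. Applying Theorem~\ref{theorem 4} to each initial segment $[b_1,\dots,b_n]_q$ shows that consecutive convergents $v_{n-1}$ and $v_n$ are adjacent vertices of $\mathcal{F}_q$, so the convergents trace an infinite walk in $\mathcal{F}_q$ starting at $\infty$. The hypothesis is precisely that no vertex of $\mathcal{F}_q$ occurs in this walk infinitely often, and the goal is to show that $(v_n)$ converges in $\mathbb{R}\cup\{\infty\}$ with its circle topology.

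Suppose for contradiction that $(v_n)$ does not converge. Then its set of accumulation points in $\mathbb{R}\cup\{\infty\}$ contains two distinct points $x$ and $y$. Choose a hyperbolic geodesic $\gamma$ in $\mathbb{H}$ whose two ideal endpoints are distinct from $x,y$ and separate $x$ from $y$, and choose small disjoint closed sub-arcs $A_x\ni x$ and $A_y\ni y$ of $\mathbb{R}\cup\{\infty\}$ lying in opposite components of the complement of the endpoints of $\gamma$. Infinitely many convergents lie in $A_x$ and infinitely many in $A_y$, so there is an infinite set of indices $n$ for which $v_n$ and $v_{n+1}$ lie on opposite sides; for each such $n$, the edge $e_n=[v_n,v_{n+1}]$ of $\mathcal{F}_q$ is a hyperbolic line with one endpoint in $A_x$ and one in $A_y$. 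Since the space of such hyperbolic lines is compact, all of these edges cross $\gamma$ inside a fixed compact sub-arc $K$ of $\gamma$.

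Now I invoke the standard fact that the tessellation of $\mathbb{H}$ by ideal $q$-gons is locally finite, so only finitely many of its faces meet $K$, and consequently only finitely many edges of $\mathcal{F}_q$ meet $K$. Hence some single edge $e$ of $\mathcal{F}_q$ coincides with $e_n$ for infinitely many indices $n$, and since $e$ has only two endpoints, one of them must appear among the $v_n$ infinitely often. This contradicts the hypothesis, completing the proof.

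The step I expect to require the most care is the geometric assertion that hyperbolic geodesics joining $A_x$ to $A_y$ all cross a common compact sub-arc of $\gamma$: this relies on $A_x$ and $A_y$ being separated from the endpoints of $\gamma$, and follows from the compactness of the set of geodesics in $\mathbb{H}$ with prescribed compact sets of endpoints. The local finiteness statement for the tessellation is a standard consequence of the proper discontinuity of the $G_q$-action on $\mathbb{H}$, so once the geometric separation is set up carefully the contradiction is immediate.
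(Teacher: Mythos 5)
Your overall strategy---extract two distinct accumulation points $x$ and $y$ of a non-convergent sequence in the compact space $\mathbb{R}_\infty$, separate them, and force infinitely many repeated vertices---is the same as the paper's, but there is a genuine gap at the key step. From the fact that infinitely many convergents lie in $A_x$ and infinitely many lie in $A_y$ you may correctly conclude that for infinitely many $n$ the vertices $v_n$ and $v_{n+1}$ lie in opposite components of $\mathbb{R}_\infty$ minus the endpoints of $\gamma$; but you then assert that each such edge $e_n$ has one endpoint \emph{in $A_x$} and one \emph{in $A_y$}. That does not follow: the walk can travel from $A_x$ to $A_y$ through many intermediate vertices, and the single edge on which it crosses over $\gamma$ can have both of its endpoints arbitrarily close to one of the two ideal endpoints of $\gamma$. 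Consequently the points where the $e_n$ meet $\gamma$ need not lie in any compact sub-arc $K$, the compactness of the set of geodesics joining $A_x$ to $A_y$ is irrelevant (the $e_n$ are not such geodesics), and the local-finiteness argument collapses: you cannot conclude that only finitely many distinct edges occur among the $e_n$.

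The paper repairs exactly this defect by choosing the separating curve combinatorially rather than metrically. By its Lemma~\ref{lemma 8}, for $x\neq y$ not adjacent vertices there are two vertices $u,v$ of a single face of $\mathcal{F}_q$ such that $x$ and $y$ lie in different components of $\mathbb{R}_\infty\setminus\{u,v\}$; by Lemma~\ref{lemma 6} no edge of $\mathcal{F}_q$ crosses the hyperbolic line from $u$ to $v$ transversally, since that line either lies inside a face or is itself an edge. Hence every crossing of the walk must pass \emph{through} $u$ or $v$, so one of these two vertices is visited infinitely often---no compactness is needed. (A separate short argument handles the case where $x$ and $y$ are adjacent vertices of $\mathcal{F}_q$, where a separating pair avoiding both may not exist; the paper reduces to $x=\infty$, $y=0$ and again applies Lemma~\ref{lemma 6}.) To fix your write-up, replace the arbitrary geodesic $\gamma$ by such a face diagonal or edge; as it stands, the step from ``$v_n$ and $v_{n+1}$ lie on opposite sides'' to ``$e_n$ joins $A_x$ to $A_y$'' is false and the proof does not go through.
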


There is an obvious converse to this theorem: if an infinite Rosen continued fraction converges to some value $x$, then it can only have infinitely many convergents equal to some vertex $y$ if $x=y$. For example, $0,1,0,1/2,0,1/3,\dotsc$ is the sequence of convergents of the integer continued fraction $[0,-1,0,-2,0,-3,\dotsc]_3$, and this sequence converges to the vertex $0$ of $\mathcal{F}_3$.

\section{Paths in Farey graphs}\label{section 2}

In this section we describe in more detail the correspondence between Rosen continued fractions and paths in Farey graphs. The procedure is similar to that found in \cite{BeHoSh2012}. For now we concentrate only on finite continued fractions and finite paths; near the end of the paper we will consider infinite continued fractions and paths.

Let us begin by  introducing some notation and terminology from  graph theory. In any graph $X$, two vertices $u$ and $v$ that are connected by an edge are said to be \emph{adjacent} or \emph{neighbours}, and we write $u\sim v$. We denote the edge incident to $u$ and $v$ by $\{u,v\}$ (or $\{v,u\}$). A \emph{path} in $X$ is a sequence of vertices $v_0,v_1,\dots,v_n$, where  $v_{i-1}\sim v_i$ for $i=1,\dots,n$. We represent this path by $\langle v_0,v_1,\dots,v_n\rangle$. The pairs $\{v_{i-1},v_i\}$ are called the \emph{edges} of the path. We define the \emph{length} of the path to be $n$ (this is one less than the number of vertices; it corresponds to the number of edges of the path). A \emph{subpath} of $\langle v_0,v_1,\dots,v_n\rangle$ is a path of the form $\langle v_i,\dots,v_j\rangle$, where $0\leq i<j\leq n$.

We focus on the Farey graphs $\mathcal{F}_q$, which have infinitely many vertices, and each vertex has infinitely many neighbours. There is an alternative way to define $\mathcal{F}_q$ to that given in the introduction. Let $L$ be the hyperbolic line in the upper half-plane $\mathbb{H}$ between (and including) $0$ and $\infty$. Under iterates of the map $\rho(z)=\lambda_q-1/z$, this hyperbolic line is mapped to each of the $q$ sides of the fundamental domain $E$ of the normal subgroup $\Gamma_q$ of $G_q$ which is shown in Figure~\ref{figure 1}(b). Since $G_q=\bigcup_{i=0}^{q-1} \Gamma_q\rho^i$, it follows that  $\mathcal{F}_q$ is the orbit of $L$ under $G_q$, and it could have been defined in this way. The transformation $\sigma(z)=-1/z$ maps $\infty$ to $0$, so the set of vertices of $\mathcal{F}_q$ is the orbit of $\infty$ under $G_q$. 

Using this description of $\mathcal{F}_q$, we can determine the neighbours of $\infty$ in $\mathcal{F}_q$. Let $\Lambda_q$ be the stabiliser of $\infty$ in $G_q$; this is the cyclic group generated by $\tau(z)=z+\lambda_q$. A vertex $v$ is a neighbour of $\infty$ if and only if $v=g(0)$ for some element $g$ of $\Lambda_q$. Therefore the neighbours of $\infty$ are the integer multiples of $\lambda_q$.

We can also use this alternative description of Farey graphs to determine the automorphism groups of these graphs. Here we consider an \emph{automorphism} of a graph $X$ to be a bijective map $f$ of the vertices of $X$ such that two vertices $u$ and $v$ are adjacent if and only if $f(u)$ and $f(v)$ are adjacent. Since $\mathcal{F}_q$ is the orbit of $L$ under $G_q$, it follows at once that each element of $G_q$ induces an automorphism of $\mathcal{F}_q$. The map $\kappa(z)=-\bar{z}$ also induces an automorphism of $\mathcal{F}_q$. It is an anticonformal transformation, so it reverses the cyclic order or vertices around faces, whereas the conformal transformations in $G_q$ preserve the cyclic order of vertices around faces. The group generated by $\kappa$ and $G_q$ is in fact the full group of automorphisms of $\mathcal{F}_q$ (we omit proof of this observation as we do not need it). 

We are now in a position to state the theorem that explains the correspondence between finite Rosen continued fractions and finite paths in Farey graphs. The $q=3$ case has been established already, in \cite[Theorem~3.1]{BeHoSh2012}, and our proof is similar, so we only sketch the details. In our sketch proof, and elsewhere in the paper, we use the following notation. Given a Rosen continued fraction $[b_1,\dots,b_n]_q$, we define a sequence of M\"obius transformations
\[
s_i(z) = b_i\lambda_q-\frac{1}{z},\quad\text{for $i=1,\dots,n$}.
\]
That is, $s_i=\tau^{b_i}\sigma$, using the generators $\sigma$ and $\tau$ of $G_q$. The convergents of $[b_1,\dots,b_n]_q$ are then given by $s_1\dotsb s_m(\infty)$, for $m=1,\dots,n$. (Here $s_1\dotsb s_m$ represents the composition of the functions $s_1,\dots,s_m$.)

\begin{theorem}\label{theorem 4}
Let $y$ be a vertex of $\mathcal{F}_q$.  The vertices $v_1,\dots,v_n$ of $\mathcal{F}_q$, with $v_n=y$, are the consecutive convergents of some Rosen continued fraction expansion of $y$ if and only if $\langle \infty,v_1,\dots,v_n\rangle$ is a path in $\mathcal{F}_q$ from $\infty$ to $y$.
\end{theorem}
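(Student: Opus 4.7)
The plan is to prove both directions by exploiting two facts established earlier in the paper: (i) the neighbours of $\infty$ in $\mathcal{F}_q$ are exactly the integer multiples of $\lambda_q$, and (ii) every element of $G_q$ induces an automorphism of $\mathcal{F}_q$. Since each $s_i=\tau^{b_i}\sigma$ lies in $G_q$, any composition $s_1\dotsb s_m$ also acts as an automorphism of $\mathcal{F}_q$; this is the workhorse of the argument.

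For the forward direction, suppose $[b_1,\dots,b_n]_q$ has convergents $v_1,\dots,v_n$, so that $v_m=s_1\dotsb s_m(\infty)$. To see that $\langle \infty,v_1,\dots,v_n\rangle$ is a path, I would check adjacency edge by edge. The first edge is immediate since $v_1=s_1(\infty)=b_1\lambda_q$, a neighbour of $\infty$ by (i). For $m\geq 2$, write $v_{m-1}=s_1\dotsb s_{m-1}(\infty)$ and $v_m=s_1\dotsb s_{m-1}(s_m(\infty))=s_1\dotsb s_{m-1}(b_m\lambda_q)$. Since $\infty$ and $b_m\lambda_q$ are adjacent in $\mathcal{F}_q$, applying the automorphism $s_1\dotsb s_{m-1}$ preserves adjacency and yields $v_{m-1}\sim v_m$.

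For the reverse direction, given a path $\langle \infty,v_1,\dots,v_n\rangle$ I construct $b_1,\dots,b_n$ inductively so that $v_m=s_1\dotsb s_m(\infty)$. Set $b_1$ to be the unique integer with $v_1=b_1\lambda_q$, which exists and is unique by (i). Inductively, suppose $b_1,\dots,b_{m-1}$ have been determined. The transformation $(s_1\dotsb s_{m-1})^{-1}$ is an automorphism of $\mathcal{F}_q$ sending $v_{m-1}$ to $\infty$, so it sends the neighbour $v_m$ of $v_{m-1}$ to some neighbour of $\infty$, which by (i) has the form $b_m\lambda_q$ for a unique integer $b_m$. Then $s_m(\infty)=b_m\lambda_q=(s_1\dotsb s_{m-1})^{-1}(v_m)$, so $v_m=s_1\dotsb s_m(\infty)$, completing the induction. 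The resulting continued fraction $[b_1,\dots,b_n]_q$ has the prescribed convergents, and its final convergent is $v_n=y$, as required. The same inductive step shows each $b_m$ is forced, which also gives the uniqueness noted in the introduction.

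I do not expect a genuine obstacle: the argument is essentially the $q=3$ proof of \cite[Theorem~3.1]{BeHoSh2012} transplanted to arbitrary $q$, with the only substantive input being the identification of the neighbours of $\infty$ in $\mathcal{F}_q$ via the stabiliser $\Lambda_q=\langle\tau\rangle$. The mildest point to be careful about is making sure that in the inductive step one quotes (i) for the vertex $\infty$ and uses that $(s_1\dotsb s_{m-1})^{-1}\in G_q$, rather than trying to identify the neighbours of a generic vertex directly; the automorphism viewpoint sidesteps that calculation entirely.
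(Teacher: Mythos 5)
Your proposal is correct and follows essentially the same route as the paper's sketch proof: both directions rest on the facts that elements of $G_q$ act as automorphisms of $\mathcal{F}_q$ and that the neighbours of $\infty$ are the integer multiples of $\lambda_q$, with the converse constructing $b_m$ as $(s_1\dotsb s_{m-1})^{-1}(v_m)/\lambda_q$ exactly as the paper does. The only cosmetic difference is that in the forward direction the paper transports the adjacent pair $\{0,\infty\}$ by $s_1\dotsb s_m$, whereas you transport $\{\infty,b_m\lambda_q\}$ by $s_1\dotsb s_{m-1}$ --- the same argument.
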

\begin{proof}[Sketch proof]
Suppose first that $v_1,\dots,v_n$ are the consecutive convergents of the Rosen continued fraction expansion $[b_1,\dots,b_n]_q$ of $y$. Then, for $m=2,\dots,n$,  $v_{m-1}$ and $v_m$ are the images of $0$ and $\infty$, respectively, under the transformation $s_1\dotsb s_m$ from $\mathcal{F}_q$, so they are adjacent. Therefore  $\langle \infty,v_1,\dots,v_n\rangle$ is a path in $\mathcal{F}_q$.

Conversely, suppose that $\langle\infty,v_1,\dots,v_n\rangle$ is a path from $\infty$ to $y$. We can construct a Rosen continued fraction $[b_1,\dots,b_n]_q$ inductively by defining $b_1=v_1/\lambda_q$ and $b_m=s_{m-1}^{-1}\dotsb s_1^{-1}(v_m)/\lambda_q$ for $m=2,\dots,n$. The convergents of this continued fraction are $v_1,\dots,v_n$.
\end{proof}

We call $\langle \infty, v_1,\dots,v_n\rangle$ the \emph{path of convergents} of $[b_1,\dots,b_n]_q$. There is a simple way to move between a Rosen continued fraction and its path of convergents, which we outline here, and which is illustrated in Figure~\ref{figure 8}. The integers $b_2,\dots,b_n$ of the expansion $[b_1,\dots,b_n]_q$ of a vertex $y$ encode a set of directions to navigate the corresponding path $\langle \infty, v_1,\dots,v_n\rangle$. In brief, to navigate the path you should upon reaching $v_{i-1}$ take the ``$b_i$th right turn'' to get to $v_i$ (which is a left turn if $b_i$ is negative).

\begin{figure}[ht]
\centering
\includegraphics{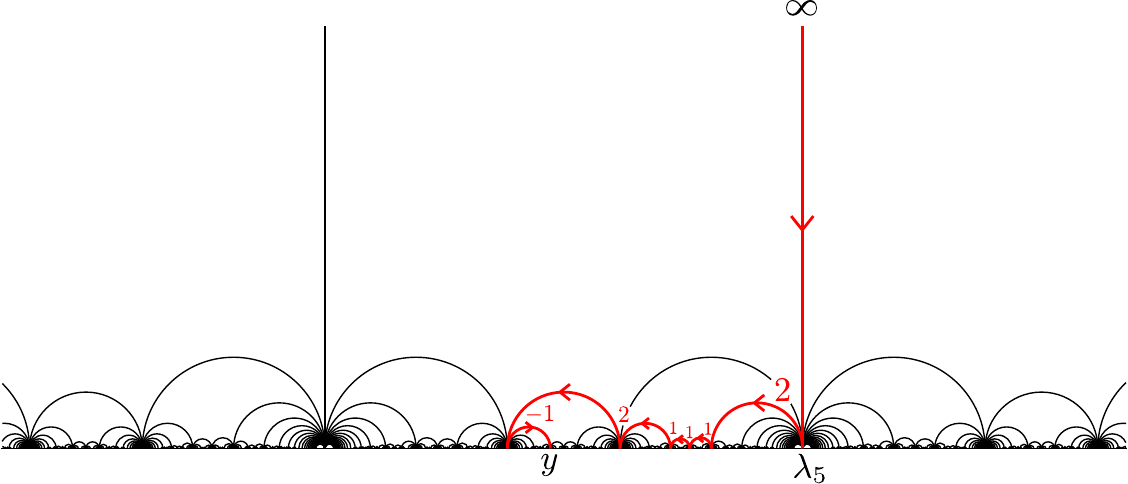}
\caption{The route of the path tells us that $y=[1,2,1,1,1,2,-1]_5$}
\label{figure 8}
\end{figure}

Let us describe this procedure in more detail. Suppose that $a$, $b$, and $c$ are vertices of $\mathcal{F}_q$ such that $a\sim b$ and $b\sim c$. We are going to define an integer-valued function $\phi(a,b,c)$. Suppose first that $b=\infty$; then $\phi(a,\infty,c)=(c-a)/\lambda_q$. Now suppose that $b\neq\infty$. In this case we choose an element $f$ of $G_q$ such that $f(b)=\infty$ and define $\phi(a,b,c)=\phi(f(a),f(b),f(c))$. The choice of $f$ does not matter, because if $g$ is another element of $G_q$ such that $g(b)=\infty$, then $g=\tau^mf$ for some integer $m$, so
\begin{align*}
\phi(g(a),g(b),g(c)) &=(g(c)-g(a))/\lambda_q\\
&=((m\lambda_q+f(c))-(m\lambda_q+f(a)))/\lambda_q\\
&=(f(c)-f(a))/\lambda_q\\
&=\phi(f(a),f(b),f(c)).
\end{align*}
A consequence of this definition is that $\phi$ is invariant under elements of $G_q$, in the sense that
\[
\phi(f(a),f(b),f(c))=\phi(a,b,c)
\]
for any map $f$ in $G_q$ and three vertices $a$, $b$, and $c$.

The function $\phi$ has a simple geometric interpretation when $b\neq\infty$, which we obtain by mapping $b$ (conformally) to $\infty$ by a member of $G_q$. Label the edges incident to $b$ by the integers, with $\{a,b\}$ (the edge between $a$ and $b$) labelled $0$, and the other edges labelled in anticlockwise order around $b$. Then $\phi(a,b,c)$ is the integer label for the edge $\{b,c\}$. An example is shown in Figure~\ref{figure 10}

\begin{figure}[ht]
\centering
\includegraphics{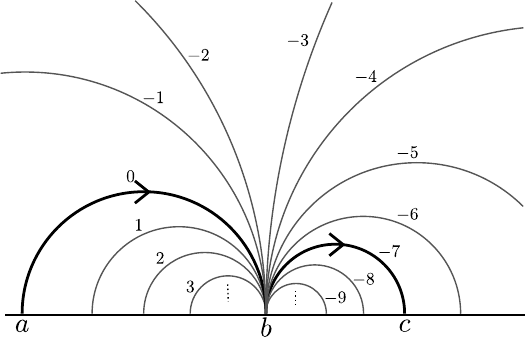}
\caption{Example in which $\phi(a,b,c)=-7$}
\label{figure 10}
\end{figure}

The first vertex of the path of convergents of a Rosen continued fraction $[b_1,\dots,b_n]_q$ is $\infty$, and the second vertex is $b_1\lambda_q$. The next lemma explains precisely how the integers $b_2,\dots,b_n$ encode a set of directions for navigating the remainder of the path.

\begin{lemma}\label{lemma 7}
Let $\langle v_0,v_1,\dots,v_n\rangle$, where $v_0=\infty$ and $n\geq 2$, be the path of convergents of the Rosen continued fraction $[b_1,\dots,b_n]_q$. Then 
\[
\phi(v_{i-2},v_{i-1},v_{i})=b_{i}
\]
for $i=2,\dots,n$.
\end{lemma}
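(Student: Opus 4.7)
The plan is to exploit the $G_q$-invariance of $\phi$ to reduce every case to the easy one in which the middle vertex is $\infty$. Recall from the sketch of Theorem~\ref{theorem 4} that $v_m=s_1\dotsb s_m(\infty)$, where $s_j(z)=b_j\lambda_q-1/z=\tau^{b_j}\sigma\in G_q$. For a fixed index $i\in\{2,\dots,n\}$, I would apply the transformation $f=(s_1\dotsb s_{i-1})^{-1}\in G_q$ to the triple $(v_{i-2},v_{i-1},v_i)$. By construction $f(v_{i-1})=\infty$, which is exactly what is needed to invoke the base case $\phi(a,\infty,c)=(c-a)/\lambda_q$ of the definition of $\phi$.

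The two remaining computations are routine. First, $f(v_{i-2})=s_{i-1}^{-1}(\infty)$, and from $s_{i-1}(z)=b_{i-1}\lambda_q-1/z$ one sees immediately that $s_{i-1}^{-1}(\infty)=0$. Second, $f(v_i)=s_i(\infty)=b_i\lambda_q$ directly from the formula for $s_i$. Applying $G_q$-invariance of $\phi$ and then the base case,
\[
\phi(v_{i-2},v_{i-1},v_i)=\phi\bigl(f(v_{i-2}),f(v_{i-1}),f(v_i)\bigr)=\phi(0,\infty,b_i\lambda_q)=\frac{b_i\lambda_q-0}{\lambda_q}=b_i,
\]
which is the claimed identity.

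There is essentially no obstacle; the only thing to verify with a little care is that the composition $s_1\dotsb s_{i-1}$ really does lie in $G_q$ (so that invariance applies), which follows because each $s_j$ is a word in the generators $\sigma$ and $\tau$. One might also wish to record that the case $i=2$ fits the same scheme with $f=s_1^{-1}$, giving $\phi(\infty,v_1,v_2)=\phi(0,\infty,b_2\lambda_q)=b_2$, and that $b_1$ is separately recovered from $v_1=b_1\lambda_q$ as noted in the paragraph preceding the lemma.
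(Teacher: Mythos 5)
Your proof is correct and is essentially identical to the paper's own argument: both apply the $G_q$-invariance of $\phi$ under $s_1\dotsb s_{i-1}$ to reduce the computation to $\phi(0,\infty,b_i\lambda_q)=b_i$, with the same intermediate evaluations $s_{i-1}^{-1}(\infty)=0$ and $s_i(\infty)=b_i\lambda_q$.
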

\begin{proof}
Since $\phi$ is invariant under the element $s_1\dotsb s_{i-1}$ of $G_q$, we see that
\begin{align*}
\phi(v_{i-2},v_{i-1},v_{i}) &= \phi(s_1\dotsb s_{i-2}(\infty),s_1\dotsb s_{i-1}(\infty),s_1\dotsb s_{i}(\infty))\\
&= \phi(s_{i-1}^{-1}(\infty),\infty,s_i(\infty))\\
&= \phi(0,\infty,b_i\lambda_q)\\
&= b_i. \qedhere
\end{align*}
\end{proof}

\section{The nearest-integer algorithm}\label{section 3}

One way to obtain a Rosen continued fraction expansion of a real number  is to apply an algorithm known as the \emph{nearest-integer algorithm}, which was referred to in the introduction. It is much the same as the more familiar nearest-integer algorithm that is used with integer continued fractions. The version for Rosen continued fractions was supplied  by Rosen in \cite{Ro1954}, and similar algorithms can be found in \cite{MaMu2010,MaSt2008,Na1995}.  Here we describe an algorithm for constructing a path of shortest possible length between two vertices of a Farey graph (a geodesic path). Towards the end of the section we then show that when one of the vertices is $\infty$, the algorithm reduces to the nearest-integer algorithm (thereby proving Theorem~\ref{theorem 1}, at least for finite Rosen continued fractions). When $q=3$, our algorithm coincides with that of \cite[Section~9]{BeHoSh2012}.

We begin by introducing some more standard concepts and terminology from graph theory. Given vertices $x$ and $y$ of a connected graph $X$, a \emph{geodesic path} from $x$ to $y$ is a path between these two vertices of shortest possible length. We define a metric $d$ on $X$ called the \emph{graph metric}, where $d(x,y)$ is the length of any geodesic path from $x$ to
$y$. We denote the graph metric on $\mathcal{F}_q$ by $d_q$. 

As we have seen, the edges of the Farey graph $\mathcal{F}_q$ lie in the upper half-plane $\mathbb{H}$ and the vertices of $\mathcal{F}_q$ lie on the ideal boundary of $\mathbb{H}$, namely $\mathbb{R}\cup\{\infty\}$, which from now on we denote more simply by $\mathbb{R}_\infty$. We can map $\mathbb{H}$ conformally on to the unit disc by a M\"obius transformation, and under such a transformation the ideal boundary $\mathbb{R}_\infty$ maps to the unit circle. It is often more convenient to think of $\mathcal{F}_q$ as a graph in the unit disc, not least because in that model it is obvious geometrically that the ideal boundary is topologically a circle, and we can speak of a finite list of points on the circle occurring in ``clockwise order''.

The M\"obius transformation we use to transfer $\mathcal{F}_q$ to the unit disc is 
\[
\psi(z)=\frac{z-e^{i\pi/q}}{z-e^{-i\pi/q}},
\]
 because this maps $e^{i\pi/q}$, one of the centres of rotation of the generator $\rho$ of $G_q$, to $0$. Part of the Farey graph $\mathcal{F}_5$ is shown in the unit disc in Figure~\ref{figure 11}.

\begin{figure}[ht]
\centering
\includegraphics{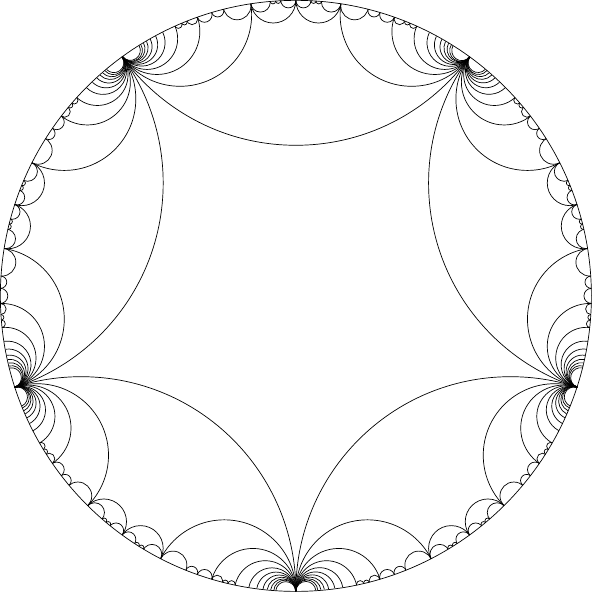}
\caption{The Farey graph $\mathcal{F}_5$}
\label{figure 11}
\end{figure}

At the heart of our algorithm for constructing a geodesic path in $\mathcal{F}_q$ is the following  lemma.

\begin{lemma}\label{lemma 2}
Let $x$ and $y$ be two non-adjacent vertices of $\mathcal{F}_q$. Among the faces of $\mathcal{F}_q$ that are incident to $x$, there is a unique one $P$ such that if $u$ and $v$ are the two vertices of $P$ that are adjacent to $x$, then $y$ belongs to the component of $\mathbb{R}_\infty\setminus\{u,v\}$ that does not contain $x$.
\end{lemma}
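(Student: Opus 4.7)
The plan is to reduce to the case $x=\infty$ and then read off both existence and uniqueness from the explicit geometry at $\infty$. Since the vertices of $\mathcal{F}_q$ form the single $G_q$-orbit of $\infty$, I can choose $f\in G_q$ with $f(x)=\infty$. Because $f$ is a M\"obius transformation preserving $\mathbb{H}$ and the graph $\mathcal{F}_q$, it permutes the faces of $\mathcal{F}_q$, preserves incidence and adjacency, and restricts to an orientation-preserving homeomorphism of $\mathbb{R}_\infty$. In particular it carries components of $\mathbb{R}_\infty\setminus\{u,v\}$ to components of $\mathbb{R}_\infty\setminus\{f(u),f(v)\}$, so after replacing $(x,y)$ by $(f(x),f(y))$ I may assume $x=\infty$ throughout.

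With $x=\infty$, the next step is to describe the faces of $\mathcal{F}_q$ incident to $\infty$ explicitly. The stabiliser of $\infty$ in $G_q$ is the cyclic group generated by $\tau$, and the fundamental domain $E$ for $\Gamma_q$ is an ideal $q$-gon having $\infty$ as a vertex with adjacent vertices $0$ and $\lambda_q$. I claim that the faces of $\mathcal{F}_q$ incident to $\infty$ are precisely the translates $\tau^k(E)$ for $k\in\mathbb{Z}$, and that the two vertices of $\tau^k(E)$ adjacent to $\infty$ are $k\lambda_q$ and $(k+1)\lambda_q$. The key point is that any edge of $\mathcal{F}_q$ with endpoint $\infty$ is a vertical hyperbolic ray ending at some $k\lambda_q$ (we already know the neighbours of $\infty$ are exactly the integer multiples of $\lambda_q$), so in any face $P$ incident to $\infty$ the two vertices of $P$ adjacent to $\infty$ must be \emph{consecutive} neighbours on $\mathbb{R}$; otherwise a vertical edge of $\mathcal{F}_q$ from $\infty$ would cut through the interior of $P$, contradicting planarity of the tessellation.

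Once this description is in hand, the lemma follows immediately. Since $y$ is a vertex of $\mathcal{F}_q$ that is not adjacent to $\infty$, $y$ is a real number that is not an integer multiple of $\lambda_q$, so there is a unique integer $k$ with $k\lambda_q<y<(k+1)\lambda_q$. The two components of $\mathbb{R}_\infty\setminus\{k\lambda_q,(k+1)\lambda_q\}$ are the open interval $(k\lambda_q,(k+1)\lambda_q)$ and its complement in $\mathbb{R}_\infty$; the latter contains $\infty=x$, so $y$ lies in the former. Hence $P=\tau^k(E)$ is a face with the required property, and the uniqueness of $k$ forces the uniqueness of $P$.

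I expect the main obstacle to be making the classification of faces incident to $\infty$ genuinely rigorous: one needs to rule out both ``missing'' faces and faces whose two vertices adjacent to $\infty$ skip a neighbour. Both possibilities are excluded by the planarity argument above together with the already-established identification of the neighbours of $\infty$, so everything else reduces to bookkeeping with the action of $\langle\tau\rangle$.
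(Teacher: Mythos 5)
Your proposal is correct and follows essentially the same route as the paper: reduce to $x=\infty$ by the transitivity of $G_q$ on vertices and then identify the required face as a $\tau^{k}$-translate of the fundamental domain $E$ (the paper further normalises $y$ into $(0,\lambda_q)$ so that the face is $E$ itself). You simply supply more detail than the paper does on why the faces incident to $\infty$ are precisely the translates $\tau^k(E)$, a fact the paper asserts without proof in Section~5.
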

\begin{proof}
We prove the lemma in the upper half-plane model where, after applying an element of $G_q$, we may assume that $x=\infty$ and $y\in(0,\lambda_q)$. With this choice of $x$ and $y$, the unique polygon $P$ is the fundamental domain $E$ of the normal subgroup $\Gamma_q$ of $G_q$ defined in the introduction, which is shown in Figure~\ref{figure 1}(b).
\end{proof}

We denote the polygon $P$ described in Lemma~\ref{lemma 2} by $P_y(x)$; an example is shown in Figure~\ref{figure 4}.

\begin{figure}[ht]
\centering
\includegraphics{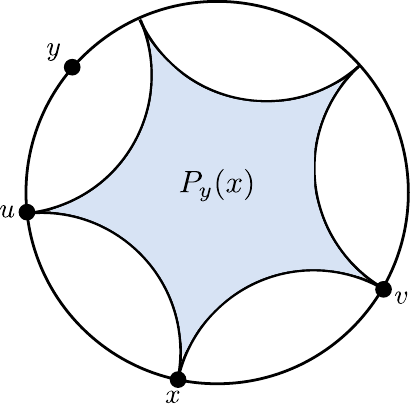}
\caption{A polygon $P_y(x)$ in $\mathcal{F}_5$}
\label{figure 4}
\end{figure}

We call the two vertices $u$ and $v$ described in Lemma~\ref{lemma 2} the \emph{$y$-parents} of $x$. When $x$ and $y$ are adjacent or equal, we define the $y$-parents of $x$ to both equal $y$ (so really there is only one $y$-parent in each of these cases). The importance of the concept of $y$-parents can be seen in the following sequence of four results.

\begin{lemma}\label{lemma 6}
Let $u$ and $v$ be distinct vertices of a face $P$ of $\mathcal{F}_q$. Suppose that $\gamma$ is a path in $\mathcal{F}_q$ that starts at a vertex in one component of $\mathbb{R}_\infty\setminus\{u,v\}$ and finishes at a vertex in the other component of $\mathbb{R}_\infty\setminus\{u,v\}$. Then $\gamma$ passes through one of $u$ or $v$.
\end{lemma}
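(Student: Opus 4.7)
The plan is to work in the closed disc model (via the map $\psi$ from the introduction) and run a planarity/Jordan-curve argument. Let $L$ denote the hyperbolic geodesic from $u$ to $v$; in the disc it is simply a Euclidean chord. Since $P$ is an ideal convex $q$-gon with $u$ and $v$ among its vertices, $L$ lies entirely in the closure $\bar P$: it either coincides with a side of $P$ (when $u$ and $v$ are consecutive vertices of $P$) or is a diagonal whose open part lies in the open face $P$. In either case the chord $L$ splits the closed disc into two closed regions $\bar R_{1}$ and $\bar R_{2}$, and, after relabelling, the two components $A_{1},A_{2}$ of $\mathbb{R}_{\infty}\setminus\{u,v\}$ satisfy $A_{j}\subset \bar R_{j}$.

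The crux will be the following geometric claim: every edge $e$ of $\mathcal{F}_q$ whose endpoints are both different from $u$ and $v$ is contained in a single closed region $\bar R_{j}$. Indeed, $e$ is itself a chord of the disc; if $e$ crossed $L$ transversely, the intersection point would be an interior point of $L$, hence a point of $\bar P$. It cannot lie in the open face $P$, because no edge of the plane graph $\mathcal{F}_q$ meets an open face of $\mathcal{F}_q$. It cannot lie on $\partial P$ either, since that would force $e$ and a side of $P$ to cross transversely, violating planarity of $\mathcal{F}_q$. So $e$ does not meet the relative interior of $L$, and since its endpoints are vertices of $\mathcal{F}_q$ distinct from $u,v$ they lie in $A_{1}\cup A_{2}$; connectedness of $e$ then forces both endpoints into the same $A_{j}$.

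Granted the claim, the lemma follows by induction along $\gamma$. Suppose, for contradiction, that $\gamma=\langle w_{0},\dots,w_{n}\rangle$ avoids both $u$ and $v$. Applying the claim to each edge $\{w_{i-1},w_{i}\}$, the two endpoints lie in a common component $A_{j(i)}$; since consecutive edges share a vertex, $j(i)$ is constant, so every $w_{i}$ belongs to a single $A_{j}$. This contradicts the hypothesis that $w_{0}$ and $w_{n}$ lie in opposite components of $\mathbb{R}_{\infty}\setminus\{u,v\}$, and so $\gamma$ must pass through $u$ or $v$.

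The main obstacle is the planarity step: checking carefully that no edge of $\mathcal{F}_q$ can secretly cross the chord $L$ when $u$ and $v$ are non-adjacent in $P$. This rests entirely on the fact, established in the introduction, that $\mathcal{F}_q$ is the one-skeleton of a tessellation of the hyperbolic plane by ideal $q$-gons, so that its edges are disjoint (except at common endpoints) and its open faces contain no edges. Once this is accepted, the rest is just a Jordan-curve argument and requires no further work.
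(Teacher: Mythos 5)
Your argument is correct and is essentially the paper's own proof: both reduce to the observation that an edge of $\gamma$ joining the two components of $\mathbb{R}_\infty\setminus\{u,v\}$ would have to cross the hyperbolic line from $u$ to $v$, which is impossible because that line is either an edge of $P$ or lies inside the face $P$. You simply spell out the planarity and Jordan-curve details that the paper leaves implicit.
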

\begin{proof}
If the lemma is false, then there is an edge  of $\gamma$ with end points in each of the components of $\mathbb{R}_\infty\setminus\{u,v\}$. This edge intersects the hyperbolic line between $u$ and $v$, which is a contradiction, because this hyperbolic line either lies inside the face $P$ or else it is an edge of $P$.
\end{proof}

\begin{theorem}\label{theorem 6}
Any path between vertices $x$ and $y$ of $\mathcal{F}_q$ must pass through one of the $y$-parents of $x$.
\end{theorem}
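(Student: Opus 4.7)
The plan is to reduce everything to Lemma~\ref{lemma 6} via the face $P=P_y(x)$ provided by Lemma~\ref{lemma 2}. First I would dispose of the degenerate cases built into the definition of $y$-parents. If $x=y$ then every path from $x$ to $y$ begins at $y$, which is its own $y$-parent. If $x\sim y$ then again the (unique) $y$-parent of $x$ is $y$ itself, so any path from $x$ to $y$ trivially contains it as an endpoint.

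The substantive case is when $x$ and $y$ are distinct and non-adjacent. Then Lemma~\ref{lemma 2} produces a unique face $P$ of $\mathcal{F}_q$ incident to $x$ such that the two vertices $u,v$ of $P$ adjacent to $x$ separate $y$ from $x$ on $\mathbb{R}_\infty$; by definition these are the $y$-parents of $x$. Since $u$ and $v$ are neighbours of $x$ (and $\mathcal{F}_q$ is a simple graph with no loops), $x\notin\{u,v\}$, so $x$ genuinely lies in the component of $\mathbb{R}_\infty\setminus\{u,v\}$ opposite to the one containing $y$. Any path from $x$ to $y$ therefore starts in one component of $\mathbb{R}_\infty\setminus\{u,v\}$ and ends in the other, and applying Lemma~\ref{lemma 6} to this path and to the face $P$ (which contains the distinct vertices $u,v$) forces the path to pass through $u$ or $v$.

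There is essentially no obstacle: the whole content of the theorem has been packaged into the previous two lemmas, and the only thing to check carefully is that $x$ itself does not coincide with one of the separating vertices $u,v$, which is immediate from the fact that $u,v$ are neighbours of $x$ in a loopless graph. No further case analysis or computation in coordinates is needed, because Lemma~\ref{lemma 2} has already been proved by normalising $x=\infty$ and $y\in(0,\lambda_q)$, so we can quote it as a black box.
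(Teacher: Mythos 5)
Your proof is correct and follows essentially the same route as the paper: dispose of the cases where $x$ and $y$ are equal or adjacent (where the $y$-parents both equal $y$), then combine Lemma~\ref{lemma 2} with Lemma~\ref{lemma 6} for the substantive case. The extra observation that $x\notin\{u,v\}$ is a harmless bit of added care that the paper leaves implicit.
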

\begin{proof}
This is certainly true if $x$ and $y$ are equal or adjacent, as in this case the $y$-parents of $x$ both equal $y$. Otherwise, the theorem follows immediately from Lemmas~\ref{lemma 2} and \ref{lemma 6}.
\end{proof}

\begin{corollary}\label{corollary 1}
If $\langle v_0,\dots,v_n\rangle$ is a geodesic path in $\mathcal{F}_q$ with $v_0=x$ and $v_n=y$, where $n\geq 1$, then $v_1$ is a $y$-parent of $x$.
\end{corollary}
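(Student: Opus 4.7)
The plan is to deduce this from Theorem~\ref{theorem 6} by a simple shortcutting argument, handling the degenerate cases (where $x$ and $y$ are adjacent or equal) separately from the generic case.

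First I would dispose of the easy case. If $x$ and $y$ are adjacent (the case $x=y$ is excluded by $n\geq 1$ together with the path being geodesic), then by definition the unique $y$-parent of $x$ is $y$ itself. Since $d_q(x,y)=1$, a geodesic path from $x$ to $y$ has length exactly $1$, so $n=1$ and $v_1=y$, which is indeed the $y$-parent of $x$.

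For the remaining case, suppose $x$ and $y$ are non-adjacent, so $n\geq 2$. By Theorem~\ref{theorem 6}, the geodesic path $\langle v_0,\dots,v_n\rangle$ must pass through some $y$-parent of $x$; let $v_k$ be such a vertex, where $k\geq 1$ (we have $k\neq 0$ because $v_0=x$ is not itself a $y$-parent, the $y$-parents being two of the vertices of an incident face of $\mathcal{F}_q$ distinct from $x$). By the definition of $y$-parent, $v_k$ is adjacent to $x$, so $\langle v_0, v_k, v_{k+1},\dots,v_n\rangle$ is a path in $\mathcal{F}_q$ from $x$ to $y$ of length $n-k+1$. Since $\langle v_0,\dots,v_n\rangle$ is geodesic, we must have $n\leq n-k+1$, forcing $k\leq 1$, hence $k=1$. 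Thus $v_1$ is a $y$-parent of $x$.

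There is really no main obstacle here; the only point requiring a moment's care is the observation that $v_0=x$ is not itself a $y$-parent in the non-adjacent case, which follows from the fact that the $y$-parents lie on a face $P$ incident to $x$ and are adjacent to $x$ on that face, and the proof relies entirely on Theorem~\ref{theorem 6} together with the geodesic hypothesis.
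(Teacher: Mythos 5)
Your proof is correct and follows essentially the same route as the paper's: invoke Theorem~\ref{theorem 6} to locate a $y$-parent $v_k$ on the path with $k\geq 1$, then use adjacency of that parent to $x$ to shortcut to $\langle v_0,v_k,\dots,v_n\rangle$ and conclude $k=1$ from the geodesic hypothesis. The paper folds the adjacent case into the same argument rather than treating it separately, but that is an immaterial difference.
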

\begin{proof}
By Theorem~\ref{theorem 6}, we can choose an integer $i\geq 1$ such that $v_i$ is a $y$-parent of $x$, and we can assume that $i$ is the smallest such integer. Then $i$ must be $1$, otherwise the path $\langle v_0,\dots,v_n\rangle$ is not a geodesic path because $\langle v_0,v_i,v_{i+1},\dots,v_n\rangle$ is a shorter path between $x$ and $y$.
\end{proof}

Corollary~\ref{corollary 1} itself has a corollary that we will need soon.

\begin{corollary}\label{corollary 3}
Each geodesic path between vertices $x$ and $y$ of a face $P$ of $\mathcal{F}_q$ is given by traversing the edges of $P$. There is a unique geodesic path between $x$ and $y$ unless $q$ is even and $x$ and $y$ are opposite vertices of $P$, in which case there are exactly two geodesic paths between $x$ and $y$.
\end{corollary}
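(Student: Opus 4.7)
The plan is to use induction on $n = d_q(x,y)$ together with Corollary~\ref{corollary 1}, which forces the second vertex of any geodesic issuing from $x$ to be a $y$-parent of $x$. The goal is to prove that every geodesic from $x$ to $y$ uses only edges of $P$; the counting part of the statement will then follow by enumerating the two arcs of $P$ joining $x$ to $y$.

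For the base case $n=1$, I need to check that when $x$ and $y$ are both vertices of $P$ and they are adjacent in $\mathcal{F}_q$, the edge $\{x,y\}$ is a side of $P$. The hyperbolic line segment realizing this edge either coincides with a side of $P$ or else crosses the interior of $P$; in the latter case it would subdivide $P$ into two regions, contradicting the hypothesis that $P$ is a single face of $\mathcal{F}_q$.

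For the inductive step, take a geodesic $\langle x=v_0,v_1,\dots,v_n=y\rangle$ with $n\geq 2$. The key observation is that both $y$-parents of $x$ lie on $P$. Let $u$ and $v$ be the two neighbors of $x$ along $P$; because $x$ and $y$ are non-adjacent vertices of $P$, the remaining vertices of $P$, including $y$, lie in the arc of $\mathbb{R}_\infty\setminus\{u,v\}$ not containing $x$. The uniqueness assertion of Lemma~\ref{lemma 2} therefore identifies $P$ as the face named there and $\{u,v\}$ as the $y$-parents of $x$. By Corollary~\ref{corollary 1}, $v_1 \in \{u,v\} \subset P$. The subpath $\langle v_1,\dots,v_n\rangle$ is a geodesic of length $n-1$ between two vertices of $P$, so the inductive hypothesis shows it runs along edges of $P$, and hence the whole geodesic does.

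Finally, exactly two paths along the boundary cycle of $P$ connect $x$ to $y$, of lengths $k$ and $q-k$, where $k\leq q/2$ is the shorter arc-distance along $P$. If $k<q-k$ then only the shorter arc is a geodesic; if $k=q-k$, which forces $q$ even and $x$ and $y$ opposite vertices of $P$, both arcs have length $q/2$ and both are geodesics. The main obstacle I anticipate is the base case: the claim that an edge of $\mathcal{F}_q$ with both endpoints on $P$ must be a side of $P$ is a planarity fact best invoked rather than computed, and stating it cleanly is the most delicate point of the argument.
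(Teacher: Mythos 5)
Your proposal is correct and follows essentially the same route as the paper: identify the $P$-neighbours of $x$ as the $y$-parents via Lemma~\ref{lemma 2}, apply Corollary~\ref{corollary 1} to confine any geodesic to the vertices of $P$, and then count the two boundary arcs. You merely make explicit (via induction and the planarity remark in the base case) what the paper's shorter proof leaves implicit.
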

\begin{proof}
This is clearly true if $x$ and $y$ are adjacent. Otherwise, the $y$-parents of $x$ both lie in $P$, by Lemma~\ref{lemma 2}. It then follows from Corollary~\ref{corollary 1} that any geodesic path between $x$ and $y$ is confined to the vertices of $P$, and the result follows immediately.
\end{proof}

We denote the $y$-parents of $x$ by $\alpha_y(x)$ and $\beta_y(x)$ in some order that we now explain. If $x$ and $y$ are equal or adjacent, then we must define $\alpha_y(x)$ and $\beta_y(x)$ to both equal $y$. For the remaining possibilities, we split our discussion in to two cases, depending on whether $q$ is even or odd.

Suppose first that $q$ is even. In this case there is a vertex $w$ of the $q$-gon $P_y(x)$ defined by Lemma~\ref{lemma 2} that is opposite $x$. If $y=w$, then we define $\alpha_y(x)$ and $\beta_y(x)$ to be such that the vertices $\alpha_y(x),x,\beta_y(x)$ lie in that order clockwise around $\mathbb{R}_\infty$.  If $y\neq w$, then we define $\alpha_y(x)$ to be whichever of the $y$-parents $u$ and $v$ of  $x$ lies in the same component of $\mathbb{R}_\infty\setminus\{x,w\}$ as $y$, as shown in Figure~\ref{figure 5}(a). Of course, $\beta_y(x)$ is then the remaining vertex $u$ or $v$.%
\begin{figure}[ht]
\centering
\includegraphics{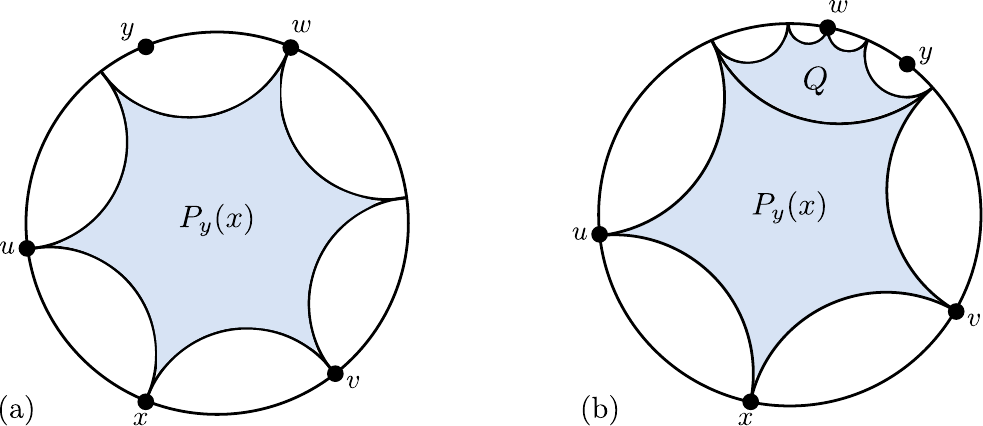}
\caption{(a) The vertex $u$ is $\alpha_y(x)$ (b) The vertex $v$ is $\alpha_y(x)$}
\label{figure 5}
\end{figure}

Suppose now that $q$ is odd. In this case, there is an edge of the $q$-gon $P_y(x)$ that is opposite $x$, rather than a vertex. Let $Q$ be the other face of $\mathcal{F}_q$ that is incident to that edge. Together $P_y(x)$ and $Q$ form a $2q$-gon, and we now define $\alpha_y(x)$ and $\beta_y(x)$ using this $2q$-gon in the same way that we did when $q$ was even, as shown in Figure~\ref{figure 5}(b).

%\begin{figure}[ht]
%\centering
%\includegraphics{figure12}
%\caption{The vertex $v$ is $\alpha_y(x)$}
%\label{figure 12}
%\end{figure}

We have now defined two maps $\alpha_y:\mathcal{F}_q\to\mathcal{F}_q$ and $\beta_y:\mathcal{F}_q\to \mathcal{F}_q$. By definition, they are invariant under $G_q$ in the sense that $f(\alpha_y(x))=\alpha_{f(y)}(f(x))$ and $f(\beta_y(x))=\beta_{f(y)}(f(x))$ for any transformation $f$ from $G_q$. When $q=3$ and $y=\infty$, the vertices $\alpha_y(x)$ and $\beta_y(x)$ have been given various names in continued fractions literature. In \cite{BeHoSh2012} they were called the \emph{first parent} and \emph{second parent} of $x$ and in \cite{LaTr1995} they were called the \emph{old parent} and \emph{young parent} of $x$.

From Corollary~\ref{corollary 1} we can see that every geodesic path from $x$ to $y$ is given by applying some sequence of the maps $\alpha_y$ and $\beta_y$ successively to the vertex $x$. In fact,  we will now show that applying the map $\alpha_y$ repeatedly gives a geodesic path from $x$ to $y$. This follows from the next lemma.

\begin{lemma}\label{lemma 3}
Suppose that $x$ and $y$ are distinct vertices of $\mathcal{F}_q$. Then
\[
d_q(\alpha_y(x),y)=d_q(x,y)-1.
\]
\end{lemma}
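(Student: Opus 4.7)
The plan is to first reduce the claim, via Corollary~\ref{corollary 1}, to showing that $d_q(\alpha_y(x), y) \leq d_q(\beta_y(x), y)$. Indeed, any geodesic from $x$ to $y$ must begin with a $y$-parent of $x$, so $d_q(x, y) = 1 + \min(d_q(\alpha_y(x), y), d_q(\beta_y(x), y))$, and establishing the desired inequality immediately gives the result. The case $x \sim y$ is trivial since then $\alpha_y(x) = y$, so I assume $x$ and $y$ are non-adjacent.

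Let $w$ denote the vertex opposite $x$ used in the definition of $\alpha_y(x)$ and $\beta_y(x)$, and let $L$ be the hyperbolic line through $x$ and $w$. By the definition of $\alpha_y(x)$, the vertices $\alpha_y(x)$ and $y$ lie in the same component of $\mathbb{R}_\infty \setminus \{x, w\}$, while $\beta_y(x)$ lies in the opposite component. The key geometric observation is that the only edges of $\mathcal{F}_q$ meeting $L$ are those incident to $x$ or $w$, together with (when $q$ is odd) the edge $\{u, v\}$ shared by $P_y(x)$ and the neighbouring face $Q$, which $L$ crosses transversally. Thus any path from $\beta_y(x)$ to $y$ must visit $x$, visit $w$, or (for $q$ odd) use the edge $\{u, v\}$.

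Take a geodesic $\gamma = \langle \beta_y(x), v_1, \dots, v_m = y \rangle$, so $m = d_q(\beta_y(x), y)$, and consider three cases. If $\gamma$ visits $x$, then a shortcut argument (using $\beta_y(x) \sim x$) forces $v_1 = x$; applying Corollary~\ref{corollary 1} to the subpath $\langle x, v_2, \dots, v_m \rangle$ forces $v_2 = \alpha_y(x)$, so $d_q(\alpha_y(x), y) \leq m - 2$. If $\gamma$ visits $w$ at $v_j$, then the subpaths are themselves geodesics, and
\[
d_q(\alpha_y(x), y) \leq d_q(\alpha_y(x), w) + d_q(w, y) = d_q(\beta_y(x), w) + d_q(w, y) = m.
\]
If $q$ is odd and $\gamma$ uses the edge $\{u,v\}$ at position $j$ (necessarily in the orientation $v \to u$, where $u$ is on the same side of $L$ as $\alpha_y(x)$), then splitting $\gamma$ at this edge gives
\[
d_q(\alpha_y(x), y) \leq d_q(\alpha_y(x), u) + d_q(u, y) = d_q(\beta_y(x), v) + d_q(u, y) = m - 1.
\]
In each case $d_q(\alpha_y(x), y) \leq m$, as required.

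The main obstacle is justifying the symmetry equalities $d_q(\alpha_y(x), w) = d_q(\beta_y(x), w)$ and (for $q$ odd) $d_q(\alpha_y(x), u) = d_q(\beta_y(x), v)$. These follow from the fact that the reflection of $\mathbb{H}$ across the hyperbolic axis $L$ induces an automorphism of $\mathcal{F}_q$: it fixes $x$ and $w$ pointwise, swaps $\alpha_y(x)$ with $\beta_y(x)$, and swaps $u$ with $v$. For $q$ even this can be avoided entirely, since $\alpha_y(x), \beta_y(x), w$ all lie in the single face $P_y(x)$ and by Corollary~\ref{corollary 3} the distances are realised inside $P_y(x)$, where equality is evident from the combinatorial symmetry of the $q$-gon. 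For $q$ odd the reflection argument, or an explicit case-by-case computation across the two faces $P_y(x)$ and $Q$, is required.
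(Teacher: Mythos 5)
Your proposal is correct and follows essentially the same route as the paper: reduce via Corollary~\ref{corollary 1} to showing $d_q(\alpha_y(x),y)\leq d_q(\beta_y(x),y)$, separate $\beta_y(x)$ from $y$ by the pair $\{x,w\}$ (the content of Lemma~\ref{lemma 6}), and transfer the distance bound across the symmetry of $P_y(x)$ (or of $P_y(x)\cup Q$ when $q$ is odd). The only cosmetic differences are that the paper works with the subpath $\langle v_1,\dots,v_n\rangle$ of a geodesic starting at $x$, so the visit-to-$x$ case is excluded automatically, and for odd $q$ it routes the argument through the vertex $a$ of the shared edge rather than the full edge crossing; your explicit reflection across $L$ (which is $\tau\kappa$ after normalising $x=\infty$) correctly supplies the distance symmetry that the paper leaves implicit.
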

\begin{proof}
Let $n=d_q(x,y)$. The result is immediate if $n=1$, so let us assume that $n\geq 2$. Let $\gamma=\langle v_0,\dots,v_n\rangle$ be any geodesic path from $x$ to $y$, where $v_0=x$ and $v_n=y$. By Corollary~\ref{corollary 1} we know that $v_1$ is equal to either $\alpha_y(x)$ or $\beta_y(x)$. If the former is true, then our result is proved, so let us suppose instead that $v_1=\beta_y(x)$, in which case $d_q(\beta_y(x),y)=n-1$. We split our argument into two cases depending on whether $q$ is even or odd.

If $q$ is even then there is a vertex $w$ opposite $x$ on the face $P_y(x)$. The vertex $y$ lies in the opposite component of $\mathbb{R}_\infty\setminus\{x,w\}$ to $\beta_y(x)$ (or possibly
$y=w$) so Lemma~\ref{lemma 6} implies that the subpath $\langle v_1,\dots,v_n\rangle$ of $\gamma$ must pass through either $x$ or $w$. However, it cannot pass through $x$, as that is the initial vertex of $\gamma$, so it must pass through $w$. Corollary~\ref{corollary 3} tells us that $d_q(w,\alpha_y(x))=d_q(w,\beta_y(x))$, so we see that
\[
d_q(\alpha_y(x),y)=d_q(\beta_y(x),y)=n-1.
\]

Suppose now that $q$ is odd. Let $e$ be the edge of $P_y(x)$ opposite $x$, and let $Q$ be the other face of $\mathcal{F}_q$ incident to $e$. Let $w$ be the vertex opposite $x$ in the $2q$-gon formed by joining $P_y(x)$ and $Q$, as shown in Figure~\ref{figure 5}(b). We also define $a$ and $b$ to be the vertices incident to $e$, where $a$ lies in the same component of $\mathbb{R}_\infty\setminus\{x,w\}$ as $\alpha_y(x)$. The vertex $y$ either lies in the opposite component of $\mathbb{R}_\infty\setminus\{x,a\}$ to $\beta_y(x)$ or else it lies in the opposite component of $\mathbb{R}_\infty\setminus\{a,w\}$ to $\beta_y(x)$ (or possibly $y$ equals $a$ or $w$).  Lemma~\ref{lemma 6} implies that the subpath $\langle v_1,\dots,v_n\rangle$ of $\gamma$ must pass through either $x$,  $a$, or $w$ (and it cannot pass through $x$ as that is the initial vertex of $\gamma$). Since $a$ and $w$ are each at least as close to $\alpha_y(x)$ as they are to $\beta_y(x)$, we see once again that
\[
d_q(\alpha_y(x),y)=d_q(\beta_y(x),y)=n-1.\qedhere
\]
\end{proof}

The next corollary is an immediate consequence of this lemma.

\begin{corollary}\label{corollary 2}
Let $x$ and $y$ be distinct vertices of $\mathcal{F}_q$. Then there is a positive integer $m$ such that ${\alpha_y}^m(x)=y$, and $\langle x,\alpha_y(x),{\alpha_y}^2(x),\dots,{\alpha_y}^m(x)\rangle$ is a geodesic path from $x$ to $y$.
\end{corollary}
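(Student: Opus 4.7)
The plan is that this corollary is a direct iteration of Lemma~\ref{lemma 3}, with essentially no new content beyond bookkeeping. Set $m = d_q(x,y)$, which is a positive integer since $x \neq y$. I would prove by induction on $k$ that
\[
d_q(\alpha_y^k(x), y) = m - k \quad \text{for } k = 0, 1, \dots, m.
\]
The base case $k=0$ is just the definition of $m$. For the inductive step, assuming the equality for some $k < m$, the vertex $\alpha_y^k(x)$ is distinct from $y$ because $d_q(\alpha_y^k(x), y) = m - k \geq 1$, so Lemma~\ref{lemma 3} applies and gives $d_q(\alpha_y^{k+1}(x), y) = (m-k) - 1$, completing the induction. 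Taking $k = m$ yields $d_q(\alpha_y^m(x), y) = 0$, hence $\alpha_y^m(x) = y$.

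It then remains to verify that the sequence $\langle x, \alpha_y(x), \alpha_y^2(x), \dots, \alpha_y^m(x)\rangle$ is actually a path in $\mathcal{F}_q$ and that it is geodesic. For the first point, whenever $u$ is a vertex distinct from $y$, the $y$-parent $\alpha_y(u)$ is by construction a vertex of the face $P_y(u)$ that is adjacent to $u$ (or $\alpha_y(u) = y \sim u$ in the adjacent case), so each consecutive pair $\alpha_y^{k-1}(x), \alpha_y^k(x)$ in the sequence is an edge of $\mathcal{F}_q$; this is precisely where the induction guarantees $\alpha_y^{k-1}(x) \neq y$ for $k = 1, \dots, m$. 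For the second point, the path has exactly $m$ edges, matching $d_q(x,y)$, so it is geodesic.

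The main (and only) step is the inductive application of Lemma~\ref{lemma 3}; there is no real obstacle, since the strict decrease of distance at each iteration automatically supplies the hypothesis $\alpha_y^k(x) \neq y$ needed to apply the lemma again. The argument is essentially one line of induction followed by a count of edges.
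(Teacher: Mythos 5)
Your proof is correct and is exactly the argument the paper intends: the paper states Corollary~\ref{corollary 2} as an immediate consequence of Lemma~\ref{lemma 3}, and your induction on $k$ showing $d_q({\alpha_y}^k(x),y)=m-k$, together with the adjacency and edge-count checks, is simply that consequence written out in full.
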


Corollary~\ref{corollary 2} tells us that to construct a geodesic path from $x$ to $y$, we can apply the map $\alpha_y$  repeatedly. Next we will show that when $x=\infty$, this geodesic path is the same as the path that arises from applying the nearest-integer algorithm. First we must describe that algorithm. Our description is essentially equivalent to Rosen's \cite{Ro1954}, but couched in the language of this paper.

We will show how to apply the nearest-integer algorithm to a real number $y$ to give a Rosen continued fraction in an inductive manner. For now we assume that $y$ is a vertex of $\mathcal{F}_q$, so that the continued fraction is finite. Later we will discuss the same algorithm when $y$ is not a vertex of $\mathcal{F}_q$. We use the notation $\Vert x\Vert_q$ to denote the nearest-integer multiple of $\lambda_q$ to $x$.

Let $b_1\lambda_q=\Vert y\Vert_q$, where $b_1\in \mathbb{Z}$. If $y$ lies half way between two integer multiples of $\lambda_q$, then we choose $b_1$ such that $b_1\lambda_q$ is the lesser of the two integer multiples of $\lambda_q$. Define $s_1(z)=b_1\lambda_q-1/z$.

Suppose now that we have constructed a sequence of integers $b_1,\dots,b_k$ and a corresponding sequence of maps $s_i(z)=b_i\lambda_q-1/z$, for $i=1,\dots,k$. We define $b_{k+1}\lambda_q=\Vert s_k^{-1}\dotsb s_1^{-1}(y)\Vert_q$, where $b_{k+1}\in\mathbb{Z}$, provided $s_k^{-1}\dotsb s_1^{-1}(y)$ is not $\infty$  (and as before we choose $b_{k+1}$ to be the lesser of the two integers in the ambiguous case). We then define $s_{k+1}(z)=b_{k+1}\lambda_q-1/z$.

We will prove shortly that $s_m^{-1}\dotsb s_1^{-1}(y)=\infty$ for some positive integer $m$, and at this stage the algorithm terminates. The outcome is a sequence of integers $b_1,\dots,b_m$ and maps $s_1,\dots,s_m$. Since $y=s_1\dotsb s_m(\infty)$, we see that the continued fraction $[b_1,\dots,b_m]_q$ has value $y$; it is called the \emph{nearest-integer continued fraction expansion} of $y$.

Let us now see why the nearest-integer algorithm applied to a vertex $y$ is equivalent to iterating the map $\alpha_y$. The key to this equivalence is  the following lemma.

\begin{lemma}\label{lemma 5}
Suppose that $y$ is a vertex of $\mathcal{F}_q$ other than $\infty$. Then $\alpha_y(\infty)=b\lambda_q$, for some integer $b$, where $b\lambda_q=\Vert y\Vert_q$ (and if $y$ lies half way between two integer multiples of $\lambda_q$, then $b\lambda_q$ is the lesser of the two).
\end{lemma}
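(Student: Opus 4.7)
The plan is to exploit the $G_q$-invariance of the $y$-parent construction to reduce to a canonical position, pin down the vertex opposite $\infty$ by a reflective symmetry, and then read $\alpha_y(\infty)$ off directly from its definition.

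First I would dispose of the trivial case: if $y$ is a neighbour of $\infty$ then $y = b\lambda_q$ for some $b \in \mathbb{Z}$ (since the neighbours of $\infty$ in $\mathcal{F}_q$ have been identified as exactly the integer multiples of $\lambda_q$), both $y$-parents of $\infty$ coincide with $y$ by definition, and the lemma is immediate. So assume $y$ is not adjacent to $\infty$, whence $y \in (k\lambda_q,(k+1)\lambda_q)$ for a unique $k \in \mathbb{Z}$. Applying $\tau^{-k} \in G_q$ (which fixes $\infty$) and using the invariance $f(\alpha_y(x)) = \alpha_{f(y)}(f(x))$ together with the fact that $\Vert\cdot\Vert_q$ commutes with integer $\lambda_q$-translation, I reduce to $y \in (0,\lambda_q)$; Lemma~\ref{lemma 2} then gives $P_y(\infty) = E$, and the two neighbours of $\infty$ on $E$, namely $0$ and $\lambda_q$, are the $y$-parents of $\infty$.

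Next I would locate the vertex $w$ that appears in the definition of $\alpha_y$: when $q$ is even this is the vertex of $E$ opposite $\infty$; when $q$ is odd it is the vertex opposite $\infty$ in the $2q$-gon $E\cup Q$. In either case, the anticonformal map $\tau\kappa\colon z\mapsto \lambda_q-\bar z$ is an automorphism of $\mathcal{F}_q$ (both $\tau\in G_q$ and $\kappa$ are automorphisms), fixes $\infty$, and swaps $0$ with $\lambda_q$. Hence it preserves $E$, and in the odd case it also preserves the edge of $E$ opposite $\infty$ and therefore $Q$. Its only real fixed point is $\lambda_q/2$, so $w = \lambda_q/2$.

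Finally, the definition of $\alpha_y(\infty)$ finishes the argument. For $y \in (0,\lambda_q/2)$, the point $y$ lies in the same component of $\mathbb{R}_\infty \setminus \{\infty,\lambda_q/2\}$ as $0$, so $\alpha_y(\infty)=0=\Vert y\Vert_q$; for $y \in (\lambda_q/2,\lambda_q)$ the symmetric argument gives $\alpha_y(\infty)=\lambda_q=\Vert y\Vert_q$. The one remaining case, arising only for $q$ even, is $y = \lambda_q/2 = w$; here the clockwise-order convention has to be invoked, and I would verify in the disc model (where $\psi$ sends $\infty,0,\lambda_q$ to $1,e^{2\pi i/q},e^{-2\pi i/q}$ respectively, and rightward motion on $\mathbb{R}$ corresponds to counter-clockwise motion on the unit circle) that $(0,\infty,\lambda_q)$ is the clockwise cyclic order, forcing $\alpha_y(\infty)=0$, in agreement with the lesser-of-two convention for $\Vert \lambda_q/2\Vert_q$. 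I expect this edge case, requiring a careful bookkeeping of orientation, to be the only real technical obstacle; everything else is symmetry plus unfolding of definitions.
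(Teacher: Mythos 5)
Your argument is correct and follows the same skeleton as the paper's proof: dispose of the adjacent case, identify $P_y(\infty)$ as the face with neighbours $b\lambda_q$ and $(b+1)\lambda_q$ of $\infty$, locate the vertex $w$ opposite $\infty$ (in $P_y(\infty)$ or in the $2q$-gon $P_y(\infty)\cup Q$), and then read off $\alpha_y(\infty)$ from the definition, with the clockwise convention settling the tie $y=w$ exactly as you describe. Where you genuinely diverge is in how $w$ is found. The paper computes all the vertices of $P_y(\infty)$ explicitly from the generators $\tau$ and $\rho$, obtaining $b\lambda_q+\sin\bigl(\pi(j-1)/q\bigr)/\sin\bigl(\pi j/q\bigr)$ for $j=1,\dots,q-1$, and reads off the middle one; it then asserts the odd case is ``similar, if slightly more involved'' and omits it. You instead observe that the anticonformal automorphism $z\mapsto\lambda_q-\bar z$ fixes $\infty$, swaps the two parents, and hence fixes $w$, forcing $w=\lambda_q/2$; this is computation-free and treats the even and odd cases uniformly, since the same symmetry preserves $Q$ and the $2q$-gon. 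The trade-off is that the paper's formula yields all the vertices (useful elsewhere), while your symmetry argument yields only $w$ but does so more cleanly. The one step you should make explicit is why $\tau\kappa$ preserves $E$ rather than sending it to an adjacent face: this follows because $E$ is the \emph{unique} face of the tessellation containing all three of $\infty$, $0$, and $\lambda_q$ (two distinct faces meet in at most an edge), and $\tau\kappa$ permutes faces while fixing this vertex triple setwise. With that remark added, your proof is complete.
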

\begin{proof}
The result is immediate if $y$ is an integer multiple of $\lambda_q$ (a neighbour of $\infty$), so let us assume that this is not so. In that case $y$ lies between $b\lambda_q$ and $(b+1)\lambda_q$, for some integer $b$. Therefore $P_y(\infty)$ is the face of $\mathcal{F}_q$ that is incident to $\infty$, $b\lambda_q$, and $(b+1)\lambda_q$, which implies that $\alpha_y(\infty)$ is equal to either $b\lambda_q$ or $(b+1)\lambda_q$. Using the generators $\tau$ and $\rho$ we can calculate all the vertices of $P_y(\infty)$ explicitly. Other than $\infty$, they are given by
\[
b\lambda_q+\frac{\sin\left(\pi(j-1)/q\right)}{\sin\left(\pi j/q\right)},\quad \text{for $j=1,\dots,q-1$}.
\]
When $q$ is even, the vertex opposite $\infty$ in $P_y(\infty)$ is $w=(b+1/2)\lambda_q$. If $y=w$, then $\alpha_y(\infty)=b\lambda_q$, because $b\lambda_q,\infty,(b+1)\lambda_q$ lie in that order clockwise around $\mathbb{R}_\infty$. So in this case $\alpha_y(\infty)=b\lambda_q$. If $y\neq w$, then $\alpha_y(\infty)$ lies in the same component of $\mathbb{R}\setminus\{w\}$ as $y$, so again it is equal to $b\lambda_q$.

When $q$ is odd, the argument is similar, if slightly more involved: we construct the face $Q$ as we did in Figure~\ref{figure 5}(b), and determine that the vertex opposite $\infty$ in the $2q$-gon made up of $P_y(\infty)$ joined to $Q$ is again $w=(b+1/2)\lambda_q$. We then proceed as before; the details are omitted.
\end{proof}

\begin{theorem}\label{theorem 7}
Let $b_1,b_2,\dotsc$ be the sequence of integers and $s_1,s_2,\dotsc$ the sequence of maps that arise in applying the nearest-integer algorithm to a real number $y$ that is a vertex of $\mathcal{F}_q$. Suppose that $d_q(\infty,y)=m$. Then $s_1\dotsb s_k(\infty)={\alpha_y}^k(\infty)$ for $k=1,\dots,m$. In particular, the nearest-integer algorithm terminates.
\end{theorem}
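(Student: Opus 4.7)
The plan is to proceed by induction on $k$, with Lemma~\ref{lemma 5} doing the work in the base case and a $G_q$-invariance argument driving the inductive step.

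For $k=1$, the claim is immediate: the nearest-integer algorithm sets $b_1\lambda_q = \Vert y\Vert_q$ with the tie-breaking convention, and Lemma~\ref{lemma 5} says exactly that $\alpha_y(\infty)=b_1\lambda_q = s_1(\infty)$, so both sides agree.

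For the inductive step, assume $s_1\dotsb s_k(\infty) = \alpha_y^k(\infty)$ for some $k<m$. First I would check that the algorithm has not already terminated, so that $b_{k+1}$ is actually defined. Since $d_q(\infty,y)=m$, Corollary~\ref{corollary 2} gives that $\alpha_y^j(\infty)\neq y$ for $j<m$; combined with the inductive hypothesis this says $s_1\dotsb s_k(\infty)\neq y$, equivalently $s_k^{-1}\dotsb s_1^{-1}(y)\neq \infty$, so the algorithm proceeds to step $k+1$. Now apply Lemma~\ref{lemma 5} to the vertex $s_k^{-1}\dotsb s_1^{-1}(y)$, which yields
\[
\alpha_{s_k^{-1}\dotsb s_1^{-1}(y)}(\infty) = b_{k+1}\lambda_q = s_{k+1}(\infty).
\]
Then push this identity forward by $s_1\dotsb s_k \in G_q$ and use the $G_q$-invariance property $f(\alpha_y(x))=\alpha_{f(y)}(f(x))$ with $f = s_1\dotsb s_k$ (so $f^{-1}(y) = s_k^{-1}\dotsb s_1^{-1}(y)$ and $f(\infty) = s_1\dotsb s_k(\infty)$) to obtain
\[
\alpha_y\bigl(s_1\dotsb s_k(\infty)\bigr) = s_1\dotsb s_{k+1}(\infty).
\]
By the inductive hypothesis the left-hand side equals $\alpha_y^{k+1}(\infty)$, completing the induction.

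Termination is then essentially free: at $k=m$ we have $s_1\dotsb s_m(\infty) = \alpha_y^m(\infty) = y$ by Corollary~\ref{corollary 2}, hence $s_m^{-1}\dotsb s_1^{-1}(y)=\infty$ and the nearest-integer algorithm halts at step $m$. I do not expect a real obstacle here; the only subtle point is keeping the tie-breaking convention consistent, which is guaranteed because Lemma~\ref{lemma 5} already uses the same convention that the algorithm does. The whole argument is just Lemma~\ref{lemma 5} applied at each vertex of the orbit, transported by $G_q$-invariance back to the orbit of $\infty$ under~$\alpha_y$.
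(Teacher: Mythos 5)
Your proof is correct and follows essentially the same route as the paper's: induction on $k$ with Lemma~\ref{lemma 5} handling both the base case and (via $G_q$-invariance of $\alpha$ under $s_1\dotsb s_k$) the inductive step, together with the observation that $s_k^{-1}\dotsb s_1^{-1}(y)\neq\infty$ for $k<m$. The only difference is that you make the non-termination check before step $k+1$ explicit via Corollary~\ref{corollary 2}, which the paper leaves implicit.
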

\begin{proof}
We proceed by induction on $k$. First, $s_1(\infty)=b_1\lambda_q=\alpha_y(\infty)$, using Lemma~\ref{lemma 5} for the second equality. Now suppose that $s_1\dotsb s_k(\infty)={\alpha_y}^k(\infty)$, where $k<m$. Then
\[
{\alpha_y}^{k+1}(\infty)=\alpha_y(s_1\dotsb s_k(\infty))=s_1\dotsb s_k(\alpha_{s_k^{-1}\dotsb s_1^{-1}(y)}(\infty)),
\]
using invariance of $\alpha$ under the transformation $s_1\dotsb s_k$ (an element of $G_q$) for the second equality. The vertex $s_k^{-1}\dotsb s_1^{-1}(y)$ of $\mathcal{F}_q$ is not $\infty$, for if it were then $y=s_1\dotsb s_k(\infty)={\alpha_y}^k(\infty)$. By definition, then, $b_{k+1}\lambda_q=\Vert s_k^{-1}\dotsb s_1^{-1}(y)\Vert_q$. Therefore, using Lemma~\ref{lemma 5} again,
\[
\alpha_{s_k^{-1}\dotsb s_1^{-1}(y)}(\infty) = b_{k+1}\lambda_q = s_{k+1}(\infty).
\]
Hence ${\alpha_y}^{k+1}(\infty)=s_1\dotsb s_{k+1}(\infty)$, which completes the inductive step.
\end{proof}

We can now prove Theorem~\ref{theorem 1} for finite  Rosen continued fractions.

\begin{proof}[Proof of Theorem~\ref{theorem 1}: finite case]
Let $y$ be a vertex of $\mathcal{F}_q$. Corollary~\ref{corollary 2} tells us that there is a positive integer $m$ such that ${\alpha_y}^m(\infty)=y$, and $\langle \infty, \alpha_y(\infty),\alpha^2_y(\infty),\dots,{\alpha_y}^m(\infty)\rangle$ is a geodesic path. Theorem~\ref{theorem 7} says that this path is the same as the path of convergents from $\infty$ to $y$ given by the nearest-integer continued fraction expansion of $y$. Therefore this expansion is a  geodesic Rosen continued fraction expansion.
\end{proof}

\section{Equivalent paths}\label{section 4}

In \cite{Ro1954}, Rosen described a sequence of operations that can be used to transform any Rosen continued fraction with value $y$ to the nearest-integer expansion of $y$. Here we explain informally how this process can be illuminated using paths in $\mathcal{F}_q$, without engaging with the details of Rosen's arguments. We will use only elementary graph theory.

%We will show that any path from $\infty$ to $y$ is ``homotopic'' (in a sense that will shortly be made precise) to the path of convergents from $\infty$ to $y$ that arises from the nearest-integer algorithm. 

To explain our method, let us begin with a finite connected plane graph $X$ and a path $\gamma$ in $X$. We define two elementary operations that can be applied to $\gamma$. The first is to either insert or remove a subpath of length two that proceeds from one vertex to a neighbouring vertex and immediately back again. The second is to either insert or remove a subpath that consists of a full circuit of the boundary of a face. These two operations preserve the start and end points of $\gamma$. They are illustrated in Figure~\ref{figure 9}.

\begin{figure}[ht]
\centering
\includegraphics{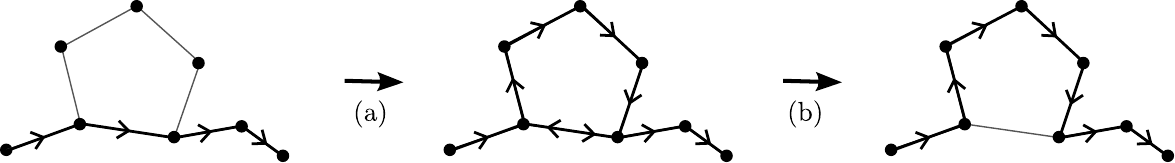}
\caption{(a) A clockwise circuit of the boundary of a face is inserted into the path (b) A subpath that proceeds from one vertex to a neighbouring vertex and immediately back again is removed from the path}
\label{figure 9}
\end{figure}

If we can transform $\gamma$ to another path $\gamma'$ by a finite sequence of these elementary operations, then we say that $\gamma$ and $\gamma'$ are \emph{homotopic}. There is a theory of homotopic paths for  more general 2-complexes than just finite connected plane graphs; see, for example, \cite[Section~1.2]{CoGrKuZi1998}. Using this concept of homotopy, one can define the fundamental group of a 2-complex in the obvious way, and, as you might expect, the fundamental group of a finite connected plane graph is trivial (see \cite[Corollary~1.2.14]{CoGrKuZi1998}). It follows that any two paths in a finite connected plane graph that start at the same vertex and finish at the same vertex are homotopic.

We can define homotopy for finite paths in the infinite graph $\mathcal{F}_q$ in exactly the same way as we have done for finite connected plane graphs, and we obtain the same conclusion about homotopic paths.

\begin{theorem}\label{theorem 5}
Any two paths in $\mathcal{F}_q$ that start at the same vertex and finish at the same vertex are homotopic.
\end{theorem}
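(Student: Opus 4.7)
The plan is to reduce to the finite connected plane graph case that has already been cited from \cite{CoGrKuZi1998}. Given paths $\gamma$ and $\gamma'$ in $\mathcal{F}_q$ sharing the same start vertex $x$ and end vertex $y$, I would work in the unit disc model, where $\mathcal{F}_q$ is a plane graph whose vertices all lie on the unit circle. The union $\gamma\cup\gamma'$ is a compact $1$-complex sitting in the open disc, so the closed walk $\gamma\cdot\overline{\gamma'}$ (obtained by concatenating $\gamma$ with $\gamma'$ reversed) has a complement in the open disc with finitely many connected components. Exactly one of them touches the unit circle; the others are bounded and, by local finiteness of the tessellation away from the ideal boundary, are each covered by finitely many faces of $\mathcal{F}_q$.

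Let $X$ be the subgraph of $\mathcal{F}_q$ consisting of all vertices and edges of $\mathcal{F}_q$ that lie in the closure of the union of these bounded components (in particular $\gamma\cup\gamma'\subseteq X$). Then $X$ is a finite connected plane graph, and its bounded faces (as a plane graph in its own right) are precisely faces of $\mathcal{F}_q$. Applying \cite[Corollary~1.2.14]{CoGrKuZi1998} to $X$, there is a finite sequence of elementary operations in $X$ that carries $\gamma$ to $\gamma'$. An operation of the first type (insertion or removal of a length-two back-and-forth subpath) is automatically a valid elementary operation in $\mathcal{F}_q$; an operation of the second type that inserts or removes a circuit of a \emph{bounded} face of $X$ is a circuit of a face of $\mathcal{F}_q$ and is likewise valid in $\mathcal{F}_q$. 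The homotopy in $X$ therefore lifts verbatim to a homotopy in $\mathcal{F}_q$.

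The one step to check carefully, and the main obstacle, is that the homotopy provided inside $X$ might call for a circuit of the \emph{outer} face of $X$, which is not a face of $\mathcal{F}_q$. To rule this out I would argue by winding numbers: a point deep inside the outer face of $X$ is disjoint from the bounded regions enclosed by $\gamma\cdot\overline{\gamma'}$, so this loop has winding number zero about such a point. Since the winding number is an invariant of combinatorial homotopy that counts the signed total number of outer-face circuits, those circuits must cancel in pairs and can be eliminated in favour of sequences of bounded-face operations. An alternative, cleaner route that avoids this subtlety is to regard $\mathcal{F}_q$ together with a $2$-cell glued to each ideal $q$-gon as a $2$-complex $K$; because the tessellation fills the hyperbolic plane, $K$ is homeomorphic to $\mathbb{R}^2$ and is simply connected, and the standard identification of the combinatorial edge-path group of a $2$-complex with its topological fundamental group (as developed in \cite[Section~1.2]{CoGrKuZi1998}) then immediately yields that $\gamma\cdot\overline{\gamma'}$ is combinatorially null-homotopic, whence $\gamma$ and $\gamma'$ are homotopic in $\mathcal{F}_q$.
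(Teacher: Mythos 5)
Your overall strategy---trap the two paths in a suitable finite connected plane subgraph and invoke the combinatorial fundamental group machinery of \cite[Section~1.2]{CoGrKuZi1998}---is the same as the paper's, but your construction of that subgraph is considerably more elaborate and, as written, not correct. The paper simply exhausts $\mathcal{F}_q$ by finite unions of faces (start with one face, repeatedly adjoin all neighbouring faces) and takes a term of the exhaustion large enough to contain both paths; every bounded face of such a subgraph is a face of $\mathcal{F}_q$ by construction, and no analysis of complementary regions is needed. In your setup, by contrast: the vertices of $\mathcal{F}_q$ lie \emph{on} the unit circle, so $\gamma\cup\gamma'$ is not a compact subset of the open disc, and more than one complementary component can meet the circle (already a single edge cuts the disc into two such components), so ``exactly one of them touches the unit circle'' fails; the closures of the remaining components do reach the circle at ideal vertices, so ``local finiteness away from the ideal boundary'' does not by itself yield a finite face cover---one must argue separately that near an ideal vertex only the finitely many faces lying between two consecutive edges of the walk can intervene; and your $X$ need not contain $\gamma\cup\gamma'$ at all when the closed walk $\gamma\cdot\overline{\gamma'}$ encloses no region (for instance when $\gamma=\gamma'$, or along edges traversed by both paths).

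The outer-face worry you raise is genuine---and it is in fact equally present, and left implicit, in the paper's own deduction that homotopy in the finite subgraph gives homotopy in $\mathcal{F}_q$. Your winding-number fix is too vague to count as a proof: ``those circuits must cancel in pairs and can be eliminated'' is essentially the assertion that needs proving, since eliminating them requires commuting outer-face circuits past the other operations. Your alternative route, however, is correct and self-contained: attaching a $2$-cell to every face of $\mathcal{F}_q$ yields a complex homeomorphic to the plane, hence simply connected, and the identification of the edge-path group with the topological fundamental group then gives the theorem directly, with no finite truncation---at the cost of applying the $2$-complex machinery to an infinite complex rather than to the finite plane graphs for which the paper quotes it.
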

\begin{proof}
Suppose that we start with a single face of $\mathcal{F}_q$, then adjoin all neighbouring faces, then adjoin all neighbouring faces of all those faces, and so forth. This yields a sequence of finite connected plane subgraphs of $\mathcal{F}_q$. We can choose a subgraph sufficiently far along the sequence that it contains all the vertices of the two paths. The two paths are homotopic in this subgraph, so they are homotopic in $\mathcal{F}_q$.
\end{proof}

Theorem~\ref{theorem 5} shows that any two paths from $\infty$ to a vertex $y$ of $\mathcal{F}_q$ are homotopic, so in particular, any path from $\infty$ to $y$ is homotopic to the path that we obtain by applying the nearest-integer algorithm to $y$. Using Lemma~\ref{lemma 7} we can reinterpret the two elementary operations as transformations of the Rosen continued fractions corresponding to the paths. This is illustrated by Figure~\ref{figure 13}. The first elementary operation corresponds to inserting or removing a $0$ coefficient in the continued fraction, and the second operation corresponds to inserting or removing  $q-1$ consecutive coefficients of value $1$ or $q-1$ consecutive coefficients of value $-1$ from the continued fraction. (Both  operations also impact on the neighbouring coefficients in the continued fraction; we will not go into this.) These two operations are essentially the same operations that Rosen uses in \cite{Ro1954}. They can be seen as applications of the two relations $\sigma^2=I$ and $\rho^q=I$ satisfied by the generators $\sigma$ and $\rho$ of $G_q$.

\begin{figure}[ht]
\centering
\includegraphics{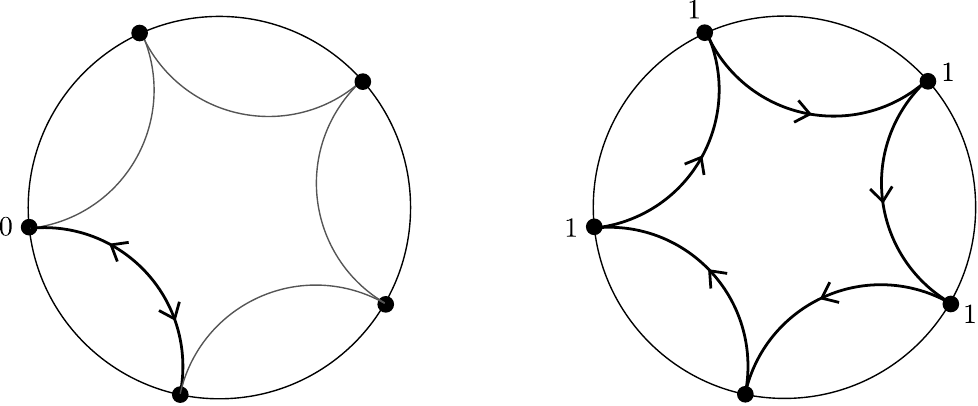}
\caption{Two paths in $\mathcal{F}_q$ labelled with coefficients of the corresponding continued fractions}
\label{figure 13}
\end{figure}

\section{Chains of $q$-gons}\label{section 5}

Let us now properly define the chain of $q$-gons between two non-adjacent vertices $x$ and $y$ of $\mathcal{F}_q$ that has so far only been introduced informally in the introduction. Consider a collection of Euclidean $q$-gons $P_1,\dots,P_n$ in the plane  such that $P_{i-1}$ and $P_i$ have a common edge for $i=2,\dots,n$ but otherwise the $q$-gons (including their interiors) do not overlap one another. Together these $q$-gons give rise to a connected, finite plane graph called a \emph{$q$-chain} whose vertices and edges are those of the constituent $q$-gons. For example, a $5$-chain is shown in Figure~\ref{figure 6}. We also refer to plane graphs that are topologically equivalent to $q$-chains as $q$-chains. 

Next we describe a process for constructing a $q$-chain $P_1,\dots,P_n$ consisting of faces of $\mathcal{F}_q$ such that $x$ is a vertex of $P_1$ and $y$ is a vertex of $P_n$. First, let $P_1$ be the face $P_y(x)$ (which was defined after  Lemma~\ref{lemma 2}). If $y$ is a vertex of $P_1$, then the construction terminates. Otherwise, there are two adjacent vertices $a_1$ and $b_1$ of $P_1$ such that $y$ belongs to the component of $\mathbb{R}_\infty\setminus\{a_1,b_1\}$ that contains no other vertices of $P_1$. Define $P_2$ to be the face of $\mathcal{F}_q$ other than $P_1$ that is also incident to the edge $\{a_1,b_1\}$. If $y$ is a vertex of $P_2$, then the construction terminates. Otherwise, there are two adjacent vertices $a_2$ and $b_2$ of $P_2$ such that $y$ belongs to the component of $\mathbb{R}_\infty\setminus\{a_2,b_2\}$ that contains no other vertices of $P_2$. We then define $P_3$ to be the face of $\mathcal{F}_q$ other than $P_2$ that is incident to $\{a_2,b_2\}$, and the procedure continues in this fashion. The resulting sequence of $q$-gons is uniquely defined by this process, because there is only one choice for each pair $\{a_i,b_i\}$. The first few $q$-gons in such a sequence are shown in Figure~\ref{figure 17}.

\begin{figure}[ht]
\centering
\includegraphics{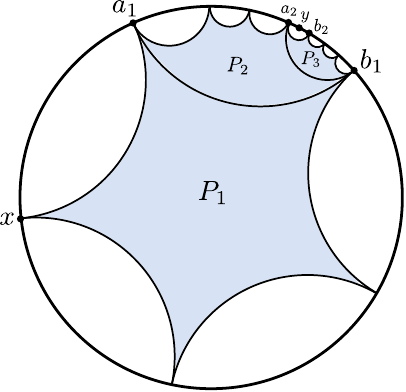}
\caption{The first three $q$-gons in a $q$-chain}
\label{figure 17}
\end{figure}

We must show that the procedure terminates. To this end, let us first show that a vertex $v$ of $\mathcal{F}_q$ can be incident to only finitely many consecutive faces of $P_1,P_2,\dotsc$. By applying a suitable element of $G_q$, we see that it suffices to prove this when $v=\infty$. The faces of $\mathcal{F}_q$ incident to $\infty$ are just the translates by iterates of $\tau$ of the fundamental domain $E$ of $\Gamma_q$, defined in the introduction, and it is straightforward to check that only finitely many of these $q$-gons can appear consecutively in the sequence $P_1,P_2,\dotsc$.

We deduce that there is a sequence of positive integers $n_1<n_2<\dotsb$ such that each pair of edges $\{a_{n_i},b_{n_i}\}$ and $\{a_{n_{i+1}},b_{n_{i+1}}\}$ do not have a common vertex. Let $d_i$ be the distance in the graph metric from $y$ to $a_{n_i}$ or $b_{n_i}$, whichever is nearest. It can easily be checked that $x$ and $y$ lie in distinct components of $\mathbb{R}_\infty\setminus\{a_j,b_j\}$, for each edge $\{a_j,b_j\}$, so Lemma~\ref{lemma 6} tells us that any path from $x$ to $y$ must pass through one of $a_j$ or $b_j$. It follows that $d_1,d_2,\dotsc$ is a decreasing sequence of positive integers, which must eventually terminate. Therefore the sequence $P_1,P_2,\dotsc$ has a final member $P_n$, which is incident to $y$.

The resulting sequence $P_1,\dots,P_n$ is a $q$-chain that is a subgraph of $\mathcal{F}_q$, which we call the \emph{$q$-chain from $x$ to $y$}. Figure~\ref{figure 17} shows that using the disc model of the hyperbolic plane, only a few faces from a $q$-chain are large enough (in Euclidean terms) that we can see them. Instead we usually draw $q$-chains using Euclidean polygons, as we did in Figure~\ref{figure 6}.

\begin{lemma}\label{lemma 4}
The $y$-parents of any vertex in the $q$-chain from $x$ to $y$ also belong to the $q$-chain.
\end{lemma}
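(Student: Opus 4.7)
The strategy is to identify $P_y(v)$ (the unique face of $\mathcal{F}_q$ produced by Lemma~\ref{lemma 2}) with one of the chain faces $P_j$; once that is done, the two $v$-neighbours in $P_y(v)$, which by Lemma~\ref{lemma 2} are precisely the $y$-parents of $v$, automatically lie in $P_j$ and hence in the chain. The degenerate cases where $v = y$ or $v \sim y$ in $\mathcal{F}_q$ are immediate, since the $y$-parents then coincide with $y$, which is a vertex of $P_n$.

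So assume $v$ is neither equal nor adjacent to $y$. Let $j$ be the largest index with $v$ a vertex of $P_j$, and let $u$ and $w$ be the two vertices of $P_j$ adjacent to $v$ around this $q$-gon. The plan is to verify the characterizing property of Lemma~\ref{lemma 2} for $P_j$, namely that $y$ lies in the component of $\mathbb{R}_\infty \setminus \{u, w\}$ that does not contain $v$. The uniqueness clause of Lemma~\ref{lemma 2} then forces $P_y(v) = P_j$, so that $u$ and $w$ are the $y$-parents of $v$ and lie in the chain.

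If $j = n$, then $y$ is itself a vertex of $P_n$, distinct from $v$ and not adjacent to $v$ in $P_n$; the remaining vertices of $P_n$ all lie in the arc of $\mathbb{R}_\infty$ opposite to $v$, so $y$ does too. If instead $j < n$, the maximality of $j$ forces $v \notin \{a_j, b_j\}$, since $a_j$ and $b_j$ are the vertices shared with $P_{j+1}$. By construction of the chain, $y$ lies in the open arc $A$ of $\mathbb{R}_\infty \setminus \{a_j, b_j\}$ that contains no other vertex of $P_j$; in particular $A$ avoids $u$, $v$, and $w$, so, being connected and missing $v$, it lies in the component of $\mathbb{R}_\infty \setminus \{u, w\}$ away from $v$. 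The main obstacle is this last geometric positioning, where one must combine the absence of $P_j$-vertices on the far-side arc of $\{a_j, b_j\}$ with the fact that $u$ and $w$ are themselves $P_j$-vertices in order to deduce that $\{u, w\}$ separates $y$ from $v$; once this is done, Lemma~\ref{lemma 2} identifies $P_j$ with $P_y(v)$ and the conclusion follows.
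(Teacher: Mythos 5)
Your proposal is correct and follows essentially the same route as the paper: both take the largest index $j$ with $v\in P_j$, let $u$ and $w$ be the two neighbours of $v$ in $P_j$, show that $y$ lies in the component of $\mathbb{R}_\infty\setminus\{u,w\}$ not containing $v$, and then invoke the uniqueness in Lemma~\ref{lemma 2} to conclude that $P_y(v)=P_j$, so the $y$-parents are vertices of $P_j$. The only difference is in how the separation is verified --- the paper argues by contradiction that if $y$ lay on $v$'s side then the edge $\{a_j,b_j\}$ leading to $P_{j+1}$ would have to be $\{v,u\}$ or $\{v,w\}$, violating the maximality of $j$, whereas you position the arc $A$ directly; your final step ``connected and missing $v$'' needs the further observation that the endpoints $a_j,b_j$ of $A$ are $P_j$-vertices other than $v$ while the component of $\mathbb{R}_\infty\setminus\{u,w\}$ containing $v$ contains no $P_j$-vertex besides $v$, but that is exactly the combination you flag and it does close the argument.
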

\begin{proof}
Let us denote the chain by $P_1,\dots,P_n$. Choose a vertex $z$ of this $q$-chain other than $y$, and let $m$ be the largest integer such that $z$ is a vertex of $P_m$. Define $u$ and $v$ to be the vertices of $P_m$ adjacent to $z$. The point $y$ cannot lie in the component of $\mathbb{R}_\infty\setminus\{u,v\}$ that contains $z$, for if it did then $P_{m+1}$ would contain one of the edges $\{z,u\}$ or $\{z,v\}$, in which case $z$ would be a vertex of $P_{m+1}$. Therefore either $y$ is equal to $u$ or $v$, so that  $y$ is the single $y$-parent of $z$, or  otherwise $y$ belongs to the component of $\mathbb{R}_\infty\setminus\{u,v\}$ that does not contain $z$. Then, by definition (see Lemma~\ref{lemma 2}), $u$ and $v$ are the parents of $z$, so in particular they belong to the $q$-chain.
\end{proof}

There is an important corollary to this lemma.

\begin{theorem}\label{theorem 9}
Any geodesic path from a vertex $x$ to another vertex $y$ in $\mathcal{F}_q$ is contained in the $q$-chain from $x$ to $y$.
\end{theorem}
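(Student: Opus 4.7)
The plan is a straightforward induction along the geodesic path, combining Corollary~\ref{corollary 1} with Lemma~\ref{lemma 4}. Let $\langle v_0,v_1,\dots,v_n\rangle$ be a geodesic path from $x=v_0$ to $y=v_n$ in $\mathcal{F}_q$, and let $\mathcal{C}$ denote the $q$-chain from $x$ to $y$. I would prove by induction on $i$ that each vertex $v_i$ is a vertex of $\mathcal{C}$.

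For the base case, $v_0=x$ is a vertex of the first $q$-gon $P_1 = P_y(x)$ in $\mathcal{C}$ by construction, so $v_0$ lies in $\mathcal{C}$. For the inductive step, suppose $v_i$ is a vertex of $\mathcal{C}$, with $i<n$. The subpath $\langle v_i,v_{i+1},\dots,v_n\rangle$ is itself a geodesic path (any shortcut would give a shortcut for the original), and it starts at $v_i$ and ends at $y$. By Corollary~\ref{corollary 1}, its second vertex $v_{i+1}$ is a $y$-parent of $v_i$. Since $v_i$ is a vertex of the $q$-chain from $x$ to $y$, Lemma~\ref{lemma 4} tells us that every $y$-parent of $v_i$ is also a vertex of that chain. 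Hence $v_{i+1}$ lies in $\mathcal{C}$, completing the induction.

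Finally, I would note that a path is ``contained in $\mathcal{C}$'' not only in the vertex sense but in the edge sense: each edge $\{v_i,v_{i+1}\}$ is an edge of $\mathcal{F}_q$, and since $v_{i+1}$ is a $y$-parent of $v_i$ (coming from the face $P_y(v_i)$ which is one of the faces of the $q$-chain from $v_i$ to $y$, itself a subchain of $\mathcal{C}$ by construction), this edge belongs to a face of $\mathcal{C}$ and so to $\mathcal{C}$ as a subgraph.

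The argument is essentially routine once Corollary~\ref{corollary 1} and Lemma~\ref{lemma 4} are in hand; there is no real obstacle. The only subtlety worth flagging is making sure the inductive statement is formulated so that the ``starting vertex'' of the relevant subchain slides along with the induction, but since the $q$-chain from $v_i$ to $y$ sits naturally inside the $q$-chain from $x$ to $y$, this causes no trouble.
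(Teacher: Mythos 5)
Your proof is correct and is exactly the paper's argument: the paper proves Theorem~\ref{theorem 9} as "an immediate consequence of Corollary~\ref{corollary 1} and Lemma~\ref{lemma 4}", and your induction along the path is simply that deduction written out explicitly. The extra remark about edges is fine but not needed.
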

\begin{proof}
This is an immediate consequence of Corollary~\ref{corollary 1} and Lemma~\ref{lemma 4}.
\end{proof}

Theorem~\ref{theorem 9} tells us that to understand geodesic paths in Farey graphs, it suffices to understand geodesic paths in $q$-chains. This is a significant reduction because $q$-chains are simple, finite plane graphs. Later we use $q$-chains to prove Theorems~\ref{theorem 2} and \ref{theorem 3}.

Let us define a binary function $D$ on the vertices of $\mathcal{F}_q$, which we use in the next section, as follows. If $x$ and $y$ are equal, then $D(x,y)=0$, and if they are adjacent then $D(x,y)=1$. Otherwise, $D(x,y)$ is the number of $q$-gons in the $q$-chain from $x$ to $y$. The function $D$ was mentioned already, in the introduction. It is closely related to the graph metric on the dual graph of $\mathcal{F}_q$. However, although $D$ is symmetric, it does not satisfy the triangle inequality (we omit proofs of these two facts as we do not need them).

\section{Proof of Theorem~\ref{theorem 2}}\label{section 6}

We use the following notation in the proof of Theorem~\ref{theorem 2}. As in the previous section, we let $P_1,\dots,P_n$ be the $q$-chain between non-adjacent vertices $x$ and $y$ of $\mathcal{F}_q$, and let $\{a_i,b_i\}$ be the edge between $P_i$ and $P_{i+1}$ for $i=1,\dots,n-1$. We choose $a_i$ and $b_i$ such that $a_i$, $a_{i+1}$, $b_{i+1}$, and $b_i$ occur in that order clockwise around $P_{i+1}$. It is convenient to also define $a_0=b_0=x$ and $a_n=b_n=y$. We define $\mu_i$ to be the path on $P_i$ that travels clockwise from $a_{i-1}$ to $a_i$, and we define $\nu_i$ to be the path on $P_i$ that travels anticlockwise from $b_{i-1}$ to $b_i$. All this notation is illustrated in Figure~\ref{figure 14} (in which the edge between $a_i$ and $b_i$ is labelled by its length $1$).

\begin{figure}[ht]
\centering
\includegraphics{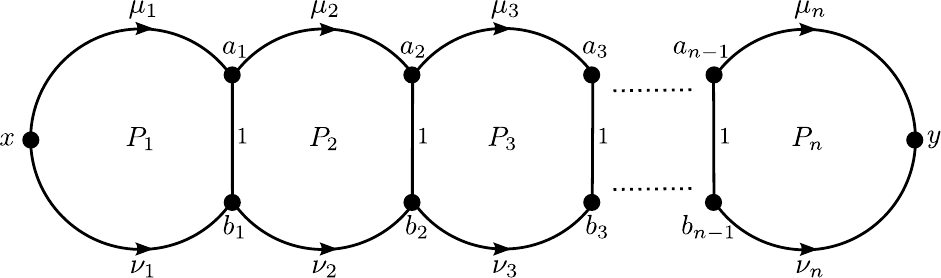}
\caption{A $q$-chain from $x$ to $y$}
\label{figure 14}
\end{figure}

We denote the length of a path $\gamma$ in $\mathcal{F}_q$ by $|\gamma|$. Note that $|\gamma|$ is the number of edges of $\gamma$, not the number of vertices (which is $|\gamma|+1$).

Let $N(x,y)$ denote the number of geodesic paths from $x$ to $y$. The following theorem, which gives bounds on $N$, is essentially equivalent to Theorem~\ref{theorem 2}, but is stated in slightly more generality. Recall that $F_n$ denotes the $n$th Fibonacci number, where $F_0=1$, $F_1=2$, $F_2$=3, $F_3=5$, and so forth.

\begin{theorem}\label{theorem 10}
Suppose that $x$ and $y$ are vertices of $\mathcal{F}_q$, and $D(x,y)=n$. Then $N(x,y)\leq F_{n}$. Furthermore, if $q$ is even then there are vertices $x$ and $y$ with $D(x,y)=n$ for which this bound can be attained.
\end{theorem}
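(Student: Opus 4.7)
The plan is to reduce the count to geodesics inside the $q$-chain $P_1,\dots,P_n$, extract a Fibonacci-style recurrence by a face-by-face decomposition, and then construct an extremal chain when $q$ is even.

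\textbf{Reduction.} Theorem~\ref{theorem 9} guarantees that every geodesic from $x$ to $y$ in $\mathcal{F}_q$ lies inside the chain $P_1\cup\dots\cup P_n$. For each $i=1,\dots,n-1$ the edge $\{a_i,b_i\}$ separates $x$ from $y$ on $\mathbb{R}_\infty$, so by Lemma~\ref{lemma 6} any such geodesic meets $\{a_i,b_i\}$. Letting $c_i\in\{a_i,b_i\}$ be the first vertex of that edge visited by the geodesic, and setting $c_0=x$, $c_n=y$, the geodesic decomposes into $n$ sub-geodesics, the $i$-th running inside $P_i$ from $c_{i-1}$ to $c_i$. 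By Corollary~\ref{corollary 3} the sub-geodesic on $P_i$ is uniquely determined unless $q$ is even and its endpoints are opposite vertices of $P_i$, in which case there are exactly two choices.

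\textbf{Fibonacci recurrence.} Set $A_k=N(x,a_k)$ and $B_k=N(x,b_k)$, so that (using the convention $a_0=b_0=x$) $A_0=B_0=1$ and (using $a_n=b_n=y$) $A_n=B_n=N(x,y)$. Splitting a geodesic from $x$ to $a_k$ according to whether it enters $P_k$ through $a_{k-1}$ or through $b_{k-1}$ yields an inequality of the form
\[
A_k \;\leq\; \alpha\, A_{k-1} + \beta\, B_{k-1},
\]
and an analogous one for $B_k$, where each of $\alpha,\beta\in\{0,1,2\}$ counts the sub-geodesics on $P_k$ whose length matches the face-distance between the relevant vertices; the value $2$ occurs only in the $q$-even, opposite-vertex case. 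Length-compatibility (the two halves of the geodesic must together realise $d_q(x,a_k)$) forces most coefficients to vanish, and a careful induction then yields the Fibonacci bound $A_k,B_k\leq F_k$. Specialising to $k=n$ gives $N(x,y)\leq F_n$.

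\textbf{Extremality when $q=2r$.} For tightness I would build a chain in which the two special edges $\{a_{i-1},b_{i-1}\}$ and $\{a_i,b_i\}$ of each face $P_i$ occupy diametrically opposite positions of the $q$-gon, so that, for instance, $a_{i-1}$ is opposite $b_i$ in $P_i$. Such a symmetric configuration is preserved by a suitable element of $G_q$ and can therefore be produced by iterating that element on a seed pair of adjacent $q$-gons. In such a chain every application of Corollary~\ref{corollary 3} returns the maximal two sub-geodesic choices and every length-compatibility constraint is satisfied simultaneously by both routes, so every inequality in the recurrence becomes an equality and $N(x,y)=F_n$ on the nose.

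\textbf{Main obstacle.} The principal technical hurdle is showing that the recurrence is genuinely Fibonacci rather than the naive $2^n$ bound one gets by multiplying the face-by-face factors from Corollary~\ref{corollary 3}: the two sub-geodesics on a single face have markedly different lengths when their endpoints are not opposite, so insisting that the full concatenation realises $d_q(x,y)$ kills most combinations. A secondary subtlety is that a geodesic traversing the edge $\{a_i,b_i\}$ visits \emph{both} endpoints and hence admits two decompositions in the framework above, so the recurrence must be set up to avoid double counting such geodesics. Handling these carefully by a face-by-face case analysis, and additionally accommodating the degenerate $q=3$ situation in which $\{a_{i-1},b_{i-1}\}$ and $\{a_i,b_i\}$ necessarily share a vertex, is where the bulk of the work lies.
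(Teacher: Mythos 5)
Your reduction to the $q$-chain and your instinct that the count satisfies a Fibonacci recurrence are both right, but the proposal stops exactly where the theorem starts: the recurrence is never established. As written, the face-by-face transfer inequality $A_k \leq \alpha A_{k-1}+\beta B_{k-1}$ with $\alpha,\beta\in\{0,1,2\}$ only yields exponential growth with base up to $4$, and the assertion that ``length-compatibility forces most coefficients to vanish'' so that ``a careful induction then yields the Fibonacci bound'' is precisely the combinatorial content of the theorem; you flag it as the main obstacle and then do not resolve it. There are also structural problems with the chosen quantities: $N(x,a_k)$ counts all geodesics of $\mathcal{F}_q$ from $x$ to $a_k$, not just initial segments of geodesics from $x$ to $y$, and the $q$-chain from $x$ to $a_k$ need not have length $k$; consecutive separating edges can share a vertex (so $|\mu_k|$ or $|\nu_k|$ can be $0$) for every $q$, not only $q=3$ --- the example $y=[0,n]_q$ at the end of Section~\ref{section 6} has all $n$ separating edges through the single vertex $0$; and a geodesic may traverse the edge $\{a_k,b_k\}$, so the ``first vertex visited'' bookkeeping does not cleanly split a geodesic into $n$ sub-geodesics, one per face.

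The paper avoids the transfer matrix entirely and obtains the Fibonacci recursion by peeling one or two faces off the front of the chain. Assume $|\mu_1|\leq|\nu_1|$. If $|\mu_1|<|\nu_1|-1$, every geodesic from $x$ to $y$ begins with $\mu_1$, so $N(x,y)=N(a_1,y)\leq F_{n-1}$ because $D(a_1,y)=n-1$. Otherwise $|\nu_1|-1\leq|\mu_1|\leq|\nu_1|$ and the geodesics split into those beginning with $\mu_1$, of which there are at most $N(a_1,y)\leq F_{n-1}$, and those beginning with $\nu_1$; the latter cannot later visit $a_1$ (the subpath from $x$ to $a_1$ would have length at least $|\nu_1|+1>|\mu_1|\geq d_q(x,a_1)$), so they are forced through the whole of the next face to $b_2$, and there are at most $N(b_2,y)\leq F_{n-2}$ of them since $D(b_2,y)=n-2$. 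Hence $N(x,y)\leq F_{n-1}+F_{n-2}=F_n$. This forcing of the ``long way round'' branch through the entirety of the next face is the missing idea that converts the naive factor of $2$ per face into the Fibonacci recursion. (For the extremal examples with $q$ even the paper also only exhibits the chains in a figure and omits verification, so your sketch of that part is on an equal footing.)
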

\begin{proof}
We prove the inequality $N(x,y)\leq F_{n}$ by using induction on $n$. It is immediate if $n$ is $0$ or $1$. Suppose now that $n>1$, and assume that the inequality is true for all pairs of vertices $u$ and $v$ with $D(u,v)<n$. Choose two vertices $x$ and $y$ with $D(x,y)=n$, and let $P_1,\dots,P_n$ be the $q$-chain from $x$ to $y$, illustrated in Figure~\ref{figure 14}. Without loss of generality, we may assume that $|\mu_1|\leq |\nu_1|$. If $|\mu_1|<|\nu_1|-1$, then every geodesic path from $x$ to $y$ must pass along the path $\mu_1$. Since $D(a_1,y)=n-1$ we see by induction that
\[
N(x,y)=N(a_1,y)\leq F_{n-1} < F_{n}.
\]
The remaining possibility is that $|\nu_1|-1\leq |\mu_1| \leq |\nu_1|$. In this case, the set of geodesic paths from $x$ to $y$ can be partitioned into the set $A$ of those geodesic paths that travel along the path  $\mu_1$ and the set $B$ of those geodesic paths that travel along the path  $\nu_1$. A path in  $B$ cannot pass through $a_1$ (as well as $b_1$) because it is a geodesic path. Instead it must pass through $b_2$. Since $D(a_1,y)=n-1$ and $D(b_2,y)=n-2$ it follows by induction that
\[
N(x,y) = |A|+|B|\leq N(a_1,y)+N(b_2,y)\leq F_{n-1}+F_{n-2}=F_{n}.
\]
This completes the proof of the first assertion of the theorem.

To prove the second assertion of the theorem, that the bound $F_{n}$ can be attained when $q$ is even, we illustrate in Figure~\ref{figure 15} $q$-chains (with $q=2r$) that attain the bound, and highlight the $q=4$ case. The labels in Figure~\ref{figure 15} give the number of edges between pairs of vertices. The details to show that these examples do indeed attain the bound are omitted.
\end{proof}

\begin{figure}[ht]
\centering
\includegraphics{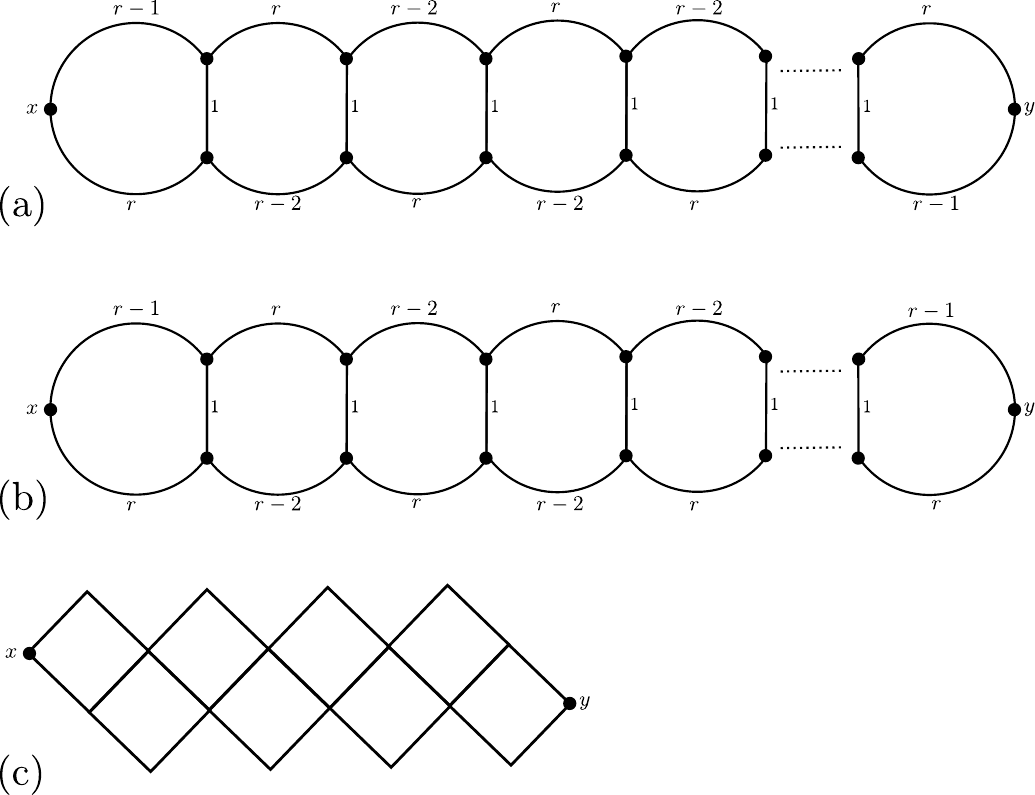}
\caption{(a) A $2r$-chain for which $N(x,y)=F_{n}$ ($n$ even) (b) A $2r$-chain for which $N(x,y)=F_{n}$ ($n$ odd)  (c) A $4$-chain for which $N(x,y)=F_{n}$ }
\label{figure 15}
\end{figure}

Theorem~\ref{theorem 10} does not give the best possible bounds for $N$ when $q$ is odd. Using a similar but more elaborate proof to that of Theorem~\ref{theorem 10}, one can show that, when $n>1$,
\[
N(x,y) \leq
\begin{cases}
 F_{n/2}, & \text{$n$ even},\\
2F_{(n-3)/2}, & \text{$n$ odd}.
\end{cases}
\]
This is the best possible bound, in the sense that, for each integer $n>1$, there are vertices $x$ and $y$ of $\mathcal{F}_q$ with $D(x,y)=n$ for which $N(x,y)$ achieves the bound. (There is one exception to this: when $q=3$ and $n$ is odd, there is a better bound $N(x,y)\leq F_{(n-1)/2}$.) Examples for which this bound is attained (when $q>3$) are shown in Figure~\ref{figure 16}.

\begin{figure}[ht]
\centering
\includegraphics{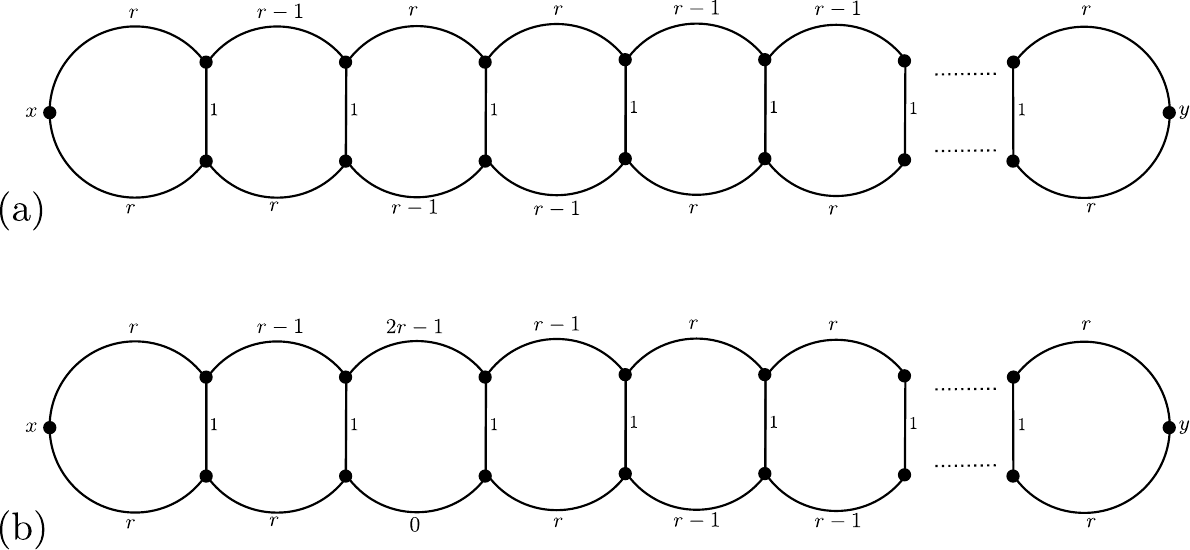}
\caption{(a) A $(2r+1)$-chain for which $N(x,y)= F_{n/2}$ ($n$ even) (b) A $(2r+1)$-chain for which $N(x,y)=F_{(n-3)/2}$  ($n$ odd)}
\label{figure 16}
\end{figure}

Theorem~\ref{theorem 10} gives an upper bound for the function $N$; the lower bound for $N$, no matter the value of $D(x,y)$, is $1$. For example, let $x=\infty$ and $y=[0,n]_q$. Then there is a unique geodesic path between $x$ and $y$, namely $\langle x,0,y\rangle$, and $D(x,y)=n$.

\section{Proof of Theorem~\ref{theorem 3}: part I}\label{section 7a}

Theorem~\ref{theorem 3} gives necessary and sufficient conditions for $[b_1,\dots,b_n]_q$ to be a geodesic Rosen continued fraction when $q$ is even. We now state a theorem of the same type when $q$ is odd, and greater than 3. The version of Theorem~\ref{theorem 12} when $q=3$ is a little different, and has been established already, in \cite[Theorem~1.3]{BeHoSh2012}.

\begin{theorem}\label{theorem 12}
Suppose that  $q=2r+1$, where $r\geq 2$. The continued fraction $[b_1,\dots,b_n]_q$ is a geodesic Rosen continued fraction if and only if the sequence $b_2,\dots,b_n$ has no terms equal to $0$ and contains no subsequence of consecutive terms either of the form $\pm  1^{[r]}$ or of the form
\[
\pm (1^{[d_1]},2,1^{[d_2]},2,\dots ,1^{[d_k]}),
\]
where $k$ is odd, at least $3$, and $d_1,\dots,d_k$ is the sequence
\[
r-1,\underbrace{r-1,r-2,r-1,r-2,\dots,r-1}_{\text{alternating $r-1$ and $r-2$}},r-1.
\]
\end{theorem}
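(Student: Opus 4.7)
The plan is to follow the same strategy used for Theorem~\ref{theorem 3}, adapted to the extra parity effects present when $q=2r+1$ is odd. By Theorem~\ref{theorem 9} every geodesic path from $\infty$ to $y$ lies inside the $q$-chain $P_1,\dots,P_n$ from $\infty$ to $y$, so the analysis can be carried out entirely inside this chain. I use the notation of Section~\ref{section 5}: each $(2r+1)$-gon $P_i$ is entered across $\{a_{i-1},b_{i-1}\}$ and left across $\{a_i,b_i\}$, with the two alternative arcs $\mu_i$ and $\nu_i$ across $P_i$. Lemma~\ref{lemma 7} is the bridge between paths and coefficient sequences, so each condition on $b_2,\dots,b_n$ has a direct geometric interpretation in terms of how the path of convergents sits inside the chain.

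For the ``only if'' (non-geodesic) direction I would show that every forbidden pattern forces an explicit local shortening. The zero pattern gives $v_i=v_{i-2}$, immediately shorter by two. For $\pm 1^{[r]}$, the $r$ consecutive $1$s together with the preceding context edge trace $r+1$ consecutive vertices along one arc of a single $(2r+1)$-gon, while the complementary arc has length only $r$, so the path is not a geodesic. For the long pattern $\pm(1^{[d_1]},2,\dots,1^{[d_k]})$ I would reconstruct a subchain of $(2r+1)$-gons: each run $1^{[d_i]}$ describes an arc of length $d_i$ across one face, each $2$ transitions to the next face of the subchain across the edge just beyond the current arc's endpoint, and the prescribed $d_i$ values reproduce exactly the outer boundary of the subchain; the route through the $\mu_i$-arcs on the inside of the subchain is then one edge shorter.

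For the ``if'' (geodesic) direction I would induct on $n=D(\infty,y)$, using Corollary~\ref{corollary 1} to reduce to paths whose first step lands on a $y$-parent of $\infty$. The contrapositive needed in the inductive step is that any non-geodesic path in the chain contains a \emph{minimal} non-geodesic subpath whose coefficient sequence, read off via Lemma~\ref{lemma 7}, matches one of the forbidden forms. A case analysis on how such a minimal subpath can cross the chain --- tracking which side ($\mu_i$ or $\nu_i$) of each face it uses and which shared edges it pivots across --- forces its shape to be one of the three forbidden families.

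The main obstacle, relative to the even case, is pinning down precisely why the middle $d_i$s alternate between $r-1$ and $r-2$, and why $k$ must be odd. The source is that in a $(2r+1)$-gon the ``opposite'' of a vertex is an edge rather than a vertex, so each time the detour hops to the next face across a pivot edge, the exit edge is offset by one position from what its even-case analogue would be; translating this offset through the $\phi$-function of Lemma~\ref{lemma 7} shifts the length of the next $1$-block by one, and requires the block pattern to close up at the correct parity at the end of the detour, which pins down both the alternation and the oddness of $k$. Verifying rigorously that no other $d_i$-configurations arise as minimal non-geodesic subpaths, and that every configuration on the prescribed list does, is where the bulk of the new technical work lies.
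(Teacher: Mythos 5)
Your overall architecture is sound and, for the harder direction, it is essentially the paper's: reduce to a chain of $q$-gons, show that a non-geodesic, non-backtracking path contains a minimal bad subpath which must be an outer path of the subchain between its own endpoints, characterise the possible arc-length sequences $|\mu_1|,\dots,|\mu_m|$, and translate back to coefficients via the $\phi$-function (the paper's Lemma~\ref{lemma 15}). For the easier direction you replace the paper's algebraic identities in $G_q$ (the odd-$q$ analogues of Lemmas~\ref{lemma 17} and~\ref{lemma 16}, stated at the end of Section~\ref{section 7a}) by a direct geometric comparison of the two outer arcs of a subchain; the paper itself remarks that its geometric arguments ``can, with care, be reversed'', and your arithmetic (the clockwise arc is exactly one edge longer than the anticlockwise one) is the same computation that opens the paper's proof of Theorem~\ref{theorem 17}. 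So the route is viable, and the substitution in the first direction is a legitimate, arguably more uniform, alternative to the group-theoretic identities.

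The gap is that the decisive step is only gestured at. You correctly identify that everything hinges on showing the minimal circuitous paths are exactly those with $|\mu_1|,\dots,|\mu_m|=r,r,r-1,r,r-1,\dots,r-1,r,r$ with $m$ odd and at least $3$ (plus the $m=1$ case $|\mu_1|=r+1$), but the justification offered --- that the exit edge is ``offset by one position'' and the pattern must ``close up at the correct parity'' --- is a heuristic, not an argument. The paper proves this through a chain of exchange arguments: Lemma~\ref{lemma 12} shows that if $\alpha$ is minimal circuitous then $\beta$ is the \emph{unique} geodesic between the endpoints; Lemmas~\ref{lemma 11} and~\ref{lemma 13} then exclude every unwanted configuration (an interior arc of length at most $r-2$, a first or second arc that is too short, two consecutive interior arcs of length $r-1$) by exhibiting a competing path of length at most $|\beta|$ that cuts across a pivot edge, contradicting that uniqueness; and the oddness of $m$ falls out in Theorem~\ref{theorem 17} because with the forced arc lengths $|\alpha|=|\beta|$ whenever $m$ is even, so $\alpha$ is not circuitous at all in that case, while a longer even prefix matching a full bad pattern would violate minimality and thereby forces the strict alternation. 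None of this is present in your proposal, and it is precisely where the parity phenomena you flag actually get proved. As written, this is a correct plan with its central lemma still to be supplied.
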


For example, if $q=5$ (that is, $r=2$) then  the sequences $d_1,\dots,d_k$ described above are of the form
\[
1,1,1, \qquad 1,1,0,1,1,\qquad 1,1,0,1,0,1,1,
\]
and so forth.

In geometric terms, Theorem~\ref{theorem 12} says that the path of convergents of a Rosen continued fraction is a geodesic path unless it contains a subpath of the type shown in Figure~\ref{figure 23} (when $q=5$).

\begin{figure}[ht]
\centering
\includegraphics{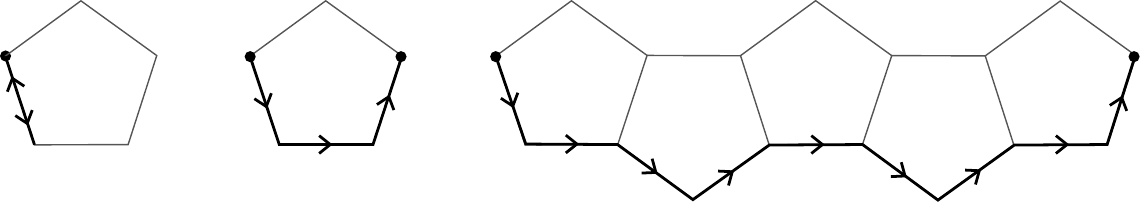}
\caption{A path in $\mathcal{F}_5$ that is not a geodesic path must contain a subpath of type similar to one of these.}
\label{figure 23}
\end{figure}

In this section we prove one of the implications from each of Theorems~\ref{theorem 3} and \ref{theorem 12} (the implication that says that if $b_2,\dots,b_n$ contains a subsequence of a certain type, then $[b_1,\dots,b_n]_q$ is not a geodesic Rosen continued fraction), leaving the other implications for the next section.

This part of the proofs relies on particular relations from the group $G_q$. Remember that $\sigma(z)=-1/z$, $\tau(z)=z+\lambda_q$, and $\kappa(z)=-\overline{z}$. For each integer $b$, let
\[
T_b(z)=b\lambda_q-\frac{1}{z}.
\]
That is, $T_b=\tau^b\sigma$. Observe that $\kappa T_b\kappa = T_{-b}$. This has the following useful consequence.

\begin{lemma}\label{lemma 18}
The continued fraction $[b_1,\dots,b_n]_q$ is a geodesic Rosen continued fraction if and only if $[-b_1,\dots,-b_n]_q$ is a geodesic Rosen continued fraction.
\end{lemma}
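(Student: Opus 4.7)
The plan is to exploit the identity $\kappa T_b\kappa=T_{-b}$ together with the fact that $\kappa$ is an automorphism of $\mathcal{F}_q$ fixing $\infty$. Since $\kappa^2=I$, iterating the identity gives
\[
T_{-b_1}T_{-b_2}\dotsb T_{-b_m} \;=\; \kappa\,T_{b_1}T_{b_2}\dotsb T_{b_m}\,\kappa
\]
for every $m$. Evaluating at $\infty$ and using $\kappa(\infty)=\infty$, the $m$th convergent of $[-b_1,\dots,-b_n]_q$ is precisely $\kappa$ applied to the $m$th convergent of $[b_1,\dots,b_n]_q$. Consequently, if $\langle \infty,v_1,\dots,v_n\rangle$ is the path of convergents of $[b_1,\dots,b_n]_q$ (with $v_n=y$), then the path of convergents of $[-b_1,\dots,-b_n]_q$ is $\langle \infty,\kappa(v_1),\dots,\kappa(v_n)\rangle$, ending at $\kappa(y)$.

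Next, I would invoke that $\kappa$ is an automorphism of $\mathcal{F}_q$ (noted in Section~\ref{section 2}). Automorphisms preserve adjacency and hence the graph metric $d_q$, and $\kappa$ fixes $\infty$, so $d_q(\infty,\kappa(y))=d_q(\infty,y)$. Moreover, $\kappa$ induces a length-preserving bijection between paths from $\infty$ to $y$ and paths from $\infty$ to $\kappa(y)$. By Theorem~\ref{theorem 4}, these paths are in bijective correspondence with Rosen continued fraction expansions of $y$ and of $\kappa(y)$ respectively, with matching numbers of terms.

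It follows that $[b_1,\dots,b_n]_q$ realises the minimal number of terms among Rosen continued fraction expansions of $y$ if and only if $[-b_1,\dots,-b_n]_q$ realises the minimal number of terms among Rosen continued fraction expansions of $\kappa(y)$. This is precisely the statement that one is a geodesic Rosen continued fraction if and only if the other is.

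There is no substantive obstacle here; the only point requiring a moment of care is verifying the conjugation relation $\kappa T_b\kappa=T_{-b}$, which is a direct computation from $\kappa(z)=-\bar z$ and $T_b(z)=b\lambda_q-1/z$, noting that $\lambda_q$ is real so $\overline{T_b(-\bar z)}=-b\lambda_q-1/z$ after taking the outer $\kappa$.
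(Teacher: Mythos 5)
Your proposal is correct and follows essentially the same route as the paper: conjugation by $\kappa$ sends the convergents of $[b_1,\dots,b_n]_q$ to those of $[-b_1,\dots,-b_n]_q$, and since $\kappa$ is an automorphism of $\mathcal{F}_q$ fixing $\infty$, one path of convergents is geodesic if and only if the other is. The only difference is that you spell out the final reduction to ``fewest terms'' via Theorem~\ref{theorem 4}, which the paper leaves implicit.
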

\begin{proof}
Let $\gamma=\langle \infty, v_1,\dots,v_n\rangle$ be the path of convergents of $[b_1,\dots,b_n]_q$ and let $\delta=\langle \infty, w_1,\dots,w_n\rangle$ be the path of convergents of $[-b_1,\dots,-b_n]_q$. Then
\begin{align*}
w_i &=T_{-b_1}\dotsb T_{-b_i}(\infty)\\
&=(\kappa T_{b_1}\kappa)\dotsb (\kappa T_{b_n}\kappa)(\infty)\\
&=\kappa T_{b_1}\dotsb T_{b_n}(\infty)\\
&= \kappa(v_i).
\end{align*}
Therefore $\delta$ is the image of $\gamma$ under the automorphism $\kappa$ of $\mathcal{F}_q$. It follows that $\gamma$ is a geodesic path if and only if $\delta$ is a geodesic path.
\end{proof}

The next two lemmas contain useful identities. Recall that $I$ is the identity element of $G_q$.

\begin{lemma}\label{lemma 17}
Let $q=2r$. Then ${T_1}^r=\sigma\tau^{-1}\sigma {T_{-1}}^{r-2}\tau^{-1}$.
\end{lemma}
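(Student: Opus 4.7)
The plan is to reduce everything to a computation in $G_q$ in the two generators $\sigma$ and $\tau$, exploiting the single nontrivial relation $(\tau\sigma)^q = I$, i.e.\ $\rho^{2r} = I$. Recall $T_b = \tau^b\sigma$, so $T_1 = \tau\sigma = \rho$ and $T_{-1} = \tau^{-1}\sigma$. The left-hand side is therefore $\rho^r$, and the right-hand side is a word in $\sigma$ and $\tau^{\pm 1}$ that I want to recognise as the same element.

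First I would invoke the order relation: since $\rho$ has order $q = 2r$, we have $\rho^r = \rho^{-r}$. Writing $\rho^{-1} = (\tau\sigma)^{-1} = \sigma\tau^{-1}$ (using $\sigma^2 = I$), this gives
\[
T_1^r = \rho^r = (\sigma\tau^{-1})^r.
\]
Next I would carry out a purely cosmetic regrouping of the word $(\sigma\tau^{-1})^r$. Peel off the first factor $\sigma\tau^{-1}$, and rewrite the remaining $(\sigma\tau^{-1})^{r-1} = \sigma\tau^{-1}\sigma\tau^{-1}\cdots\sigma\tau^{-1}$ (with $r-1$ copies) by associating the interior letters as
\[
(\sigma\tau^{-1})^{r-1} = \sigma\,(\tau^{-1}\sigma)^{r-2}\,\tau^{-1}.
\]
Substituting back yields
\[
T_1^r = \sigma\tau^{-1}\cdot \sigma(\tau^{-1}\sigma)^{r-2}\tau^{-1} = \sigma\tau^{-1}\sigma\,(\tau^{-1}\sigma)^{r-2}\,\tau^{-1}.
\]
Since $T_{-1} = \tau^{-1}\sigma$, the middle factor is $T_{-1}^{r-2}$, giving the claimed identity.

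There is no real obstacle here: the identity is just the relation $\rho^q = I$ in disguise, together with a regrouping of letters in a free-product word. The only thing to be slightly careful about is making sure the case $r = 2$ (where the exponent $r-2$ is $0$) still parses correctly; in that case $T_{-1}^{r-2} = I$ and both sides reduce to $\sigma\tau^{-1}\sigma\tau^{-1}$, which one can sanity-check by direct Möbius-transformation computation.
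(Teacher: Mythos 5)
Your argument is correct and is exactly the ``straightforward check'' that the paper leaves to the reader: the paper's proof simply observes that, using $\sigma^2=I$, the identity is equivalent to the relation $(\tau\sigma)^{2r}=I$, and your computation ($T_1^r=\rho^r=\rho^{-r}=(\sigma\tau^{-1})^r$, then regroup) is precisely that verification spelled out. No issues.
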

\begin{proof}
Since $\sigma^2=I$, it is straightforward to check that the identity is equivalent to the relation $(\tau\sigma)^{2r}=I$.
\end{proof}

\begin{lemma}\label{lemma 16}
Let $q=2r$. Then for each integer $k=0,1,2,\dotsc$,
\[
{T_1}^{r-1}(T_2{T_1}^{r-2})^kT_2{T_1}^{r-1}=\sigma {\tau}^{-1}\sigma ({T_{-1}}^{r-2}\, T_{-2})^{k+1}{T_{-1}}^{r-2}\tau^{-1}.
\]
\end{lemma}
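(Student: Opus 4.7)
The plan is to reduce both sides of the identity to a common form by applying Lemma~\ref{lemma 17} twice, avoiding any genuine induction on $k$. The starting observation is that $T_2=\tau^2\sigma=\tau T_1$ and $T_{-2}=\tau^{-1}T_{-1}$, which makes the block $T_2 T_1^{r-2}$ appearing on the left-hand side collapse to $\tau T_1^{r-1}$.

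First, substituting $T_2T_1^{r-2}=\tau T_1^{r-1}$ and $T_2T_1^{r-1}=\tau T_1^r$ rewrites the left-hand side as
\begin{align*}
T_1^{r-1}(T_2T_1^{r-2})^kT_2T_1^{r-1}&=T_1^{r-1}(\tau T_1^{r-1})^k\tau T_1^r\\
&=T_1^{r-1}(\tau T_1^{r-1})^{k+1}T_1\\
&=(T_1^{r-1}\tau)^{k+1}T_1^r,
\end{align*}
and Lemma~\ref{lemma 17} then replaces the trailing $T_1^r$ with $\sigma\tau^{-1}\sigma\, T_{-1}^{r-2}\tau^{-1}$, yielding
\[
(T_1^{r-1}\tau)^{k+1}\,\sigma\tau^{-1}\sigma\, T_{-1}^{r-2}\tau^{-1}.
\]

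The heart of the argument is the commutation identity
\[
(T_1^{r-1}\tau)(\sigma\tau^{-1}\sigma)=(\sigma\tau^{-1}\sigma)(T_{-1}^{r-2}T_{-2});
\]
applying it $k+1$ times moves $\sigma\tau^{-1}\sigma$ past every copy of $T_1^{r-1}\tau$, turning each one into $T_{-1}^{r-2}T_{-2}$, and the resulting expression is exactly the right-hand side of the lemma. To verify the commutation identity I would check that both sides equal $T_1^r T_{-1}$: the left side is $T_1^{r-1}\cdot\tau\sigma\cdot\tau^{-1}\sigma=T_1^{r-1}\cdot T_1\cdot T_{-1}=T_1^r T_{-1}$, while Lemma~\ref{lemma 17}, rewritten as $\sigma\tau^{-1}\sigma\, T_{-1}^{r-2}=T_1^r\tau$, makes the right side into $T_1^r\tau\cdot T_{-2}=T_1^r\cdot\tau^{-1}\sigma=T_1^r T_{-1}$.

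The only real obstacle is spotting this commutation identity; once it is in hand, the rest is bookkeeping, and the only group-theoretic input beyond $\sigma^2=I$ is the single relation $(\tau\sigma)^{2r}=I$ encoded in Lemma~\ref{lemma 17}.
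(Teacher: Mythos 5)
Your proposal is correct, and it is essentially the paper's argument: your commutation identity $(T_1^{r-1}\tau)(\sigma\tau^{-1}\sigma)=(\sigma\tau^{-1}\sigma)(T_{-1}^{r-2}T_{-2})$ is exactly the $\tau$-conjugate of the identity $T_2T_1^{r-2}=(\tau\sigma\tau^{-1}\sigma)(T_{-1}^{r-2}T_{-2})(\tau\sigma\tau^{-1}\sigma)^{-1}$ on which the paper's proof rests, and both arguments then push the conjugating element through the repeated blocks and tidy the boundary factors. The only organizational difference is that you route every verification through Lemma~\ref{lemma 17}, whereas the paper checks its conjugation identity directly from the relation $(\tau\sigma)^{2r}=I$; all of your individual computations check out.
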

\begin{proof}
Using the relation $(\tau\sigma)^{2r}=I$, we can check that
\[
T_2{T_1}^{r-2} = (\tau\sigma\tau^{-1}\sigma)({T_{-1}}^{r-2}\, T_{-2})(\tau\sigma\tau^{-1}\sigma)^{-1}.
\]
Therefore
\begin{align*}
{T_1}^{r-1}(T_2{T_1}^{r-2})^kT_2{T_1}^{r-1} &= T_1{T_2}^{-1}(T_2{T_1}^{r-2})^{k+2}T_1\\
&= \tau^{-1}(T_2{T_1}^{r-2})^{k+2}\tau\sigma\\
&= \tau^{-1}(\tau\sigma\tau^{-1}\sigma)({T_{-1}}^{r-2}\, T_{-2})^{k+2}(\tau\sigma\tau^{-1}\sigma)^{-1}\tau\sigma\\
&= (\sigma\tau^{-1}\sigma)({T_{-1}}^{r-2}\, T_{-2})^{k+1}{T_{-1}}^{r-2}T_{-2}\sigma \tau\\
&= \sigma {\tau}^{-1}\sigma ({T_{-1}}^{r-2}\, T_{-2})^{k+1}{T_{-1}}^{r-2}\tau^{-1}.\qedhere
\end{align*}
\end{proof}

The next lemma describes what happens to the path of  convergents when the continued fraction has a zero coefficient.

\begin{lemma}\label{lemma 14}
Let $[b_1,\dots,b_n]_q$ be a Rosen continued fraction with path of convergents $\langle \infty,v_1,\dots,v_n\rangle$. Then, for each integer $i$ with $2\leq i\leq n$, $b_i=0$ if and only if $v_{i-2}=v_i$.
\end{lemma}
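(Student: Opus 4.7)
The statement is naturally proved by a direct computation, using only the definition of the convergents via the maps $s_i(z) = b_i\lambda_q - 1/z$ from Section 2.

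My plan is to reduce the condition $v_{i-2}=v_i$ to an equivalent statement about the two-fold composition $s_{i-1}s_i$. Since $v_m = s_1\dotsb s_m(\infty)$ and each $s_j$ is a bijection of $\mathbb{R}_\infty$, the equality $s_1\dotsb s_{i-2}(\infty) = s_1\dotsb s_i(\infty)$ is equivalent, after cancelling the common prefix $s_1\dotsb s_{i-2}$, to
\[
s_{i-1}s_i(\infty) = \infty.
\]
So everything boils down to understanding when $s_{i-1}s_i$ fixes $\infty$.

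Next I would compute $s_{i-1}s_i(\infty)$ directly. Since $s_i(\infty) = b_i\lambda_q$, we have $s_{i-1}s_i(\infty) = s_{i-1}(b_i\lambda_q) = b_{i-1}\lambda_q - 1/(b_i\lambda_q)$ when $b_i \neq 0$, interpreting this with the usual conventions for $\infty$. If $b_i = 0$, then $s_i(\infty) = 0$ and $s_{i-1}(0) = \infty$, so $s_{i-1}s_i(\infty)=\infty$, giving $v_{i-2}=v_i$. Conversely, if $b_i \neq 0$, then $s_i(\infty) = b_i\lambda_q$ is a nonzero real number, and $s_{i-1}$ maps every nonzero real number to a real number, so $s_{i-1}s_i(\infty)\in\mathbb{R}$ and in particular $s_{i-1}s_i(\infty) \neq \infty$, so $v_{i-2} \neq v_i$. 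This establishes both directions.

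There is no real obstacle here; the statement is essentially a restatement of the fact that $T_0 = \sigma$ is an involution that swaps $0$ and $\infty$, so inserting a coefficient $0$ in the continued fraction means applying $\sigma$ twice, which is the identity, and hence the path momentarily returns to the previous vertex.

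As an alternative, one could invoke Lemma~\ref{lemma 7} directly: it says $\phi(v_{i-2},v_{i-1},v_i) = b_i$, and from the geometric interpretation of $\phi$ (the edge $\{v_{i-1},v_i\}$ carries label $b_i$ when the edge $\{v_{i-1},v_{i-2}\}$ is labelled $0$), the value $b_i=0$ is equivalent to the two edges $\{v_{i-1},v_{i-2}\}$ and $\{v_{i-1},v_i\}$ being the same edge of $\mathcal{F}_q$, which is equivalent to $v_{i-2}=v_i$. I would likely present the first, more elementary, argument in the paper.
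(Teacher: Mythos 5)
Your proposal is correct and follows essentially the same route as the paper: the paper also cancels the common prefix $s_1\dotsb s_{i-2}$ to reduce the condition $v_{i-2}=v_i$ to $s_{i-1}^{-1}(\infty)=s_i(\infty)$, i.e.\ $0=b_i\lambda_q$, which is just a compressed version of your case analysis on $s_{i-1}s_i(\infty)$.
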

\begin{proof}
Let $s_j(z)=b_j\lambda_q-1/z$, so that $v_j=s_1\dotsb s_j(\infty)$. Then $v_{i-2}=v_i$ if and only if  $s_{i-1}^{-1}(\infty)=s_{i}(\infty)$; that is, if and only if $b_i\lambda_q=0$. The result follows.
\end{proof}

We can now prove the first part of Theorem~\ref{theorem 3}.

\begin{proof}[Proof of Theorem~\ref{theorem 3}: part I]
Suppose that the sequence $b_2,\dots,b_n$ either (i) contains a $0$ term; (ii) contains a subsequence of consecutive terms of the form $\pm  1^{[r]}$; or (iii) contains a subsequence of consecutive terms of the form
\[
\pm (1^{[r-1]},2,1^{[r-2]},2,1^{[r-2]},\dots,1^{[r-2]},2,1^{[r-1]}).
\]
We must prove that $[b_1,\dots,b_n]_q$ is not a geodesic Rosen continued fraction. Lemma~\ref{lemma 18} tells us that we can switch $[b_1,\dots,b_n]_q$ for $[-b_1,\dots,-b_n]_q$ if necessary so that in cases (ii) and (iii)  we have the $+$ form of the subsequence (1s and 2s rather than $-1$s and $-2$s).

In case (i), we know from Lemma~\ref{lemma 14} that the path of convergents contains two equal terms, so it is not a geodesic path.

In case (ii), there is an integer $i\geq 1$ such that $b_{i+1}=\dotsb =b_{i+r}=1$. Consider the alternative continued fraction
\[
[b_1,\dots,b_{i-1},b_i-1,\underbrace{-1,-1,\dots,-1}_{\text{$r-2$ copies of $-1$}},b_{i+r+1}-1,b_{i+r+2},\dots,b_n]_q.
\]
When $i=n-r$, this expression becomes
\[
[b_1,\dots,b_{i-1},b_i-1,\underbrace{-1,-1,\dots,-1}_{\text{$r-2$ copies of $-1$}}]_q.
\]
The alternative continued fraction is shorter than $[b_1,\dots,b_n]_q$, and using Lemma~\ref{lemma 17} we can check that the two continued fractions have the same value:
\begin{align*}
T_{b_1}\dotsb T_{b_n}(\infty) &= T_{b_1}\dotsb T_{b_i}{T_1}^rT_{b_{i+r+1}}\dotsb T_{b_n}(\infty) \\
&= T_{b_1}\dotsb T_{b_i}\sigma\tau^{-1}\sigma {T_{-1}}^{r-2}\tau^{-1}T_{b_{i+r+1}}\dotsb T_{b_n}(\infty) \\
&= T_{b_1}\dotsb T_{b_{i-1}}T_{b_i-1} {T_{-1}}^{r-2}T_{b_{i+r+1}-1}T_{b_{i+r+2}}\dotsb T_{b_n}(\infty).
\end{align*}
Therefore $[b_1,\dots,b_n]_q$ is not a geodesic Rosen continued fraction.

In case (iii), there are integers $i$ and $j$ with $1\leq i<j\leq n$ such that
\[
b_{i+1},\dots,b_j=1^{[r-1]},2,1^{[r-2]},2,1^{[r-2]},\dots,1^{[r-2]},2,1^{[r-1]}.
\]
Let $b_{i+1}^*,\dots,b_{j-2}^*$ be the shorter sequence given by
\[
-b_{i+1}^*,\dots,-b_{j-2}^*=1^{[r-2]},2,1^{[r-2]},2,1^{[r-2]},\dots,1^{[r-2]},2,1^{[r-2]}.
\]
Consider the continued fraction
\[
[b_1,\dots,b_{i-1},b_i-1,b_{i+1}^*,\dots,b_{j-2}^*,b_{j+1}-1,b_j,\dots,b_n]_q
\]
(with the obvious interpretation when $j=n$). This is shorter than $[b_1,\dots,b_n]_q$, and using Lemma~\ref{lemma 16} you can check that the two continued fractions have the same value. So once again $[b_1,\dots,b_n]_q$ is not a geodesic Rosen continued fraction.
\end{proof}

The first part of the proof of Theorem~\ref{theorem 12} (which says that if $b_2,\dots,b_n$ contains a subsequence of one of the given types then $[b_1,\dots,b_n]_q$ is not a geodesic Rosen continued fraction) follows on exactly the same lines as the proof given above. The only significant difference is that instead of the identities in Lemmas~\ref{lemma 17} and \ref{lemma 16} we need ${T_1}^r=\sigma\tau^{-1}\sigma {T_{-1}}^{r-1}\tau^{-1}$ and
\begin{align*}
& {T_1}^{r-1}(T_2{T_1}^{r-1}T_2{T_1}^{r-2})^kT_2{T_1}^{r-1}T_2{T_1}^{r-1}\\
&=\sigma\tau^{-1}\sigma({T_{-1}}^{r-1}T_{-2}{T_{-1}}^{r-2}T_{-2})^{k+1}{T_{-1}}^{r-1}\tau^{-1},
\end{align*}
where $q=2r+1$. To prove the latter identity, it is helpful to first observe that
\[
T_2{T_1}^{r-1}T_2{T_1}^{r-2}=(\tau\sigma\tau^{-1}\sigma){T_{-1}}^{r-2}T_{-2}{T_{-1}}^{r-1}T_{-2}(\tau\sigma\tau^{-1}\sigma)^{-1};
\]
we omit the details.

\section{Proof of Theorem~\ref{theorem 3}: part II}\label{section 7b}

In this section we prove the more difficult parts of Theorems~\ref{theorem 3} and \ref{theorem 12}. Our method is thoroughly different to that of the previous section, and uses basic properties of $q$-chains. In fact, both parts of the two theorems could be proved using the techniques of this section, as many of the arguments we present can, with care, be reversed.

Let us start by introducing some new terminology for paths, which involves a concept that we met earlier. A path $\langle v_0,\dots,v_n\rangle$ in $\mathcal{F}_q$ is said to \emph{backtrack} if $v_i=v_{i+2}$ for some integer $i$ with $0\leq i\leq n-2$. That is, a path backtracks if it has a subpath that proceeds from one vertex to a neighbouring vertex and then immediately back again.

We define $P_1,\dots,P_m$ to be the $q$-chain from a vertex $x$ to a non-adjacent vertex $y$ in $\mathcal{F}_q$. Let $a_i$, $b_i$, $\mu_i$, and $\nu_i$ be the vertices and paths associated to this $q$-chain that were introduced at the start of Section~\ref{section 6}.  Given two paths $\gamma=\langle v_0,\dots,v_r\rangle$ and $\delta=\langle w_0,\dots,w_s\rangle$ in $\mathcal{F}_q$ such that $v_r=w_0$, we define $\gamma \delta$ to be the path $\langle v_0,\dots,v_r,w_1,\dots,w_s\rangle$. With this notation we can distinguish two particular paths from $x$ to $y$ in the $q$-chain, namely $\alpha=\mu_1  \mu_2 \dotsb \mu_m$ and $\beta=\nu_1  \nu_2 \dotsb \nu_m$. We refer to paths in $\mathcal{F}_q$ of this type as \emph{outer paths}. More specifically, paths of the same type as $\alpha$ are called \emph{clockwise outer paths} and paths of the same type as $\beta$ are called \emph{anticlockwise outer paths}.

The notation for the $q$-chain from $x$ to $y$ that was introduced in the previous paragraph will be retained for the rest of this section.

\begin{lemma}\label{lemma 9}
Let $x$ and $y$ be two non-adjacent vertices of $\mathcal{F}_q$. Suppose that $\chi$ is a path from $x$ to $y$ that lies in the $q$-chain from $x$ to $y$ and does not backtrack, and suppose that $\chi$ is not an outer path. Then $\chi$ intersects every path from $x$ to $y$ in a vertex other than $x$ or $y$.
\end{lemma}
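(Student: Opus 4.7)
The plan is to locate a cross-over edge of the $q$-chain that $\chi$ is forced to traverse, and then to apply Lemma~\ref{lemma 6} to any competing path $\gamma$ from $x$ to $y$.

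First I would show that $\chi$ must traverse an edge $\{a_k,b_k\}$ for some $k$ with $1 \le k \le m-1$. Suppose for contradiction that $\chi$ uses none of these cross-over edges. Then every edge of $\chi$ lies on the outer boundary of the chain, namely on the concatenation of the arcs $\mu_1,\dots,\mu_m$ (making up $\alpha$) and $\nu_1,\dots,\nu_m$ (making up $\beta$). Because the chain is a topological disk glued edge-to-edge in the hyperbolic plane, its boundary is a simple closed curve, and it decomposes into exactly the two arcs $\alpha$ and $\beta$ between $x$ and $y$. A non-backtracking path from $x$ to $y$ confined to a simple cycle must coincide with one of the two arcs between its endpoints, so $\chi$ equals $\alpha$ or $\beta$, contradicting the hypothesis that $\chi$ is not an outer path. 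Hence $\chi$ traverses some edge $\{a_k,b_k\}$, and in particular $\chi$ contains both vertices $a_k$ and $b_k$.

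For the second step, recall from Section~\ref{section 5} that $x$ and $y$ lie in distinct components of $\mathbb{R}_\infty\setminus\{a_k,b_k\}$; in particular $a_k,b_k\notin\{x,y\}$. Given any path $\gamma$ from $x$ to $y$, Lemma~\ref{lemma 6} applied with $P=P_k$, $u=a_k$, and $v=b_k$ shows that $\gamma$ passes through one of $a_k$ or $b_k$. Since $\chi$ contains both, $\gamma$ and $\chi$ share a common vertex, and that vertex is neither $x$ nor $y$.

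The main obstacle is the rigorous justification of step~1, namely that the outer boundary of the $q$-chain really is a simple cycle that forces the stated dichotomy. A priori, vertices on $\alpha$ might coincide with vertices on $\beta$, making the boundary only a closed walk; however, using that the constituent $q$-gons are embedded faces of a plane tessellation glued consecutively along single edges, the chain is a topological disk whose boundary is a Jordan curve and whose ideal vertices are distinct points of $\mathbb{R}_\infty$. The standard fact that a non-backtracking path in a simple cycle from one vertex to another traces one of the two arcs then completes the step.
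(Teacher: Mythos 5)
Your argument is correct and follows essentially the same route as the paper's proof: observe that a non-outer, non-backtracking path in the $q$-chain must traverse some crossing edge $\{a_k,b_k\}$, and then apply Lemma~\ref{lemma 6} to force any competing path through $a_k$ or $b_k$, neither of which is $x$ or $y$. The only difference is that you spell out the justification of the first step (via the boundary of the chain being a simple cycle), which the paper asserts in a single sentence.
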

\begin{proof}
As $\chi$ is not an outer path, it must traverse one of the edges $\{a_i,b_i\}$ for some integer $i$ with $1\leq i\leq m-1$. By Lemma~\ref{lemma 6}, every path from $x$ to $y$ contains one of the vertices $a_i$ or $b_i$, so $\chi$ intersects every path from $x$ to $y$ at one of these vertices.
\end{proof}

The outer paths $\alpha$ and $\beta$ may be equal in length, or one may be longer than the other. If one is longer than the other, then the longer one is called a \emph{circuitous path}. We define a \emph{minimal circuitous path} to be a circuitous path whose non-trivial subpaths are all geodesic paths. Notice that a circuitous path $\langle v_0,\dots,v_n\rangle$ is a minimal circuitous path if and only if both $\langle v_1,\dots,v_n\rangle$ and $\langle v_0,\dots,v_{n-1}\rangle$ are geodesic paths.

\begin{theorem}\label{theorem 16}
A path in $\mathcal{F}_q$ is a geodesic path if and only if it does not backtrack and it does not contain a  minimal circuitous subpath.
\end{theorem}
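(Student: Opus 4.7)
The forward direction is immediate. If $\gamma$ is a geodesic and contained a backtrack $\langle v_i,v_{i+1},v_i\rangle$, excising the detour would yield a strictly shorter path with the same endpoints, contradicting the geodesic property. If $\gamma$ contained a minimal circuitous subpath $\delta$ from $u$ to $v$, then by definition $\delta$ is the longer outer path of the $q$-chain from $u$ to $v$, so the other outer path is a strictly shorter $uv$-path; hence $d_q(u,v)<|\delta|$, so $\delta$ is not a geodesic, contradicting the fact that every subpath of a geodesic is a geodesic.

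For the backward direction I would argue by contradiction. Assume $\gamma$ has no backtracks and no minimal circuitous subpath, yet is not a geodesic. Choose a shortest non-geodesic subpath $\gamma'=\langle w_0,\dots,w_k\rangle$; every proper subpath of $\gamma'$ is then a geodesic, so $d_q(w_0,w_{k-1})=k-1=d_q(w_1,w_k)$ while $d_q(w_0,w_k)<k$. The aim is to show that $\gamma'$ itself is a minimal circuitous path, giving the required contradiction. The degenerate cases $w_0=w_k$ and $w_0\sim w_k$ are dispatched separately: the former gives $k-1=d_q(w_0,w_{k-1})\leq 1$ and hence $k=2$, forcing the backtrack $w_0=w_2$; the latter confines $\gamma'$ to a short loop which Corollary~\ref{corollary 3} identifies, after a brief case analysis, either as a backtrack or as a minimal circuitous detour around one or two faces. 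So we may assume $w_0$ and $w_k$ are non-adjacent, and let $C$ denote the $q$-chain from $w_0$ to $w_k$, with outer paths $\alpha$ and $\beta$.

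The crux of the argument is to show that $\gamma'\subseteq C$. Any geodesic $\eta$ from $w_0$ to $w_k$ lies in $C$ by Theorem~\ref{theorem 9}, so the planar closed walk $\gamma'\eta^{-1}$ bounds a region tiled by faces of $\mathcal{F}_q$. If $\gamma'$ strayed outside $C$, it would exit and re-enter across vertices of the outer boundary $\alpha\cup\beta$; substituting each such excursion by the corresponding arc of $\alpha\cup\beta$ yields a strictly shorter $u'v'$-path for the pair of exit/re-entry vertices, which contradicts either the geodesicity of the proper subpath of $\gamma'$ between $u'$ and $v'$ or the minimality of $|\gamma'|$ as the length of a non-geodesic subpath of $\gamma$. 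With $\gamma'\subseteq C$ established, Lemma~\ref{lemma 9} applies: since $\gamma'$ does not backtrack, either it is an outer path of $C$, or else it passes through some interior vertex $a_i$ or $b_i$ of $C$. In the latter case, splitting $\gamma'$ at that interior vertex gives two proper subpaths, each geodesic; rerouting one of them along the opposite side of the chain (which remains inside $C$ by Corollary~\ref{corollary 1} and Lemma~\ref{lemma 4}) produces a strictly shorter non-geodesic subpath of $\gamma$, contradicting the minimality of $|\gamma'|$. Hence $\gamma'\in\{\alpha,\beta\}$.

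To conclude, I would invoke an auxiliary fact --- provable by induction on $D(w_0,w_k)$, with the inductive step realising a geodesic along one side of the chain via the $\alpha_y$-iteration of Lemma~\ref{lemma 3} --- showing that the shorter outer path of $C$ is always a geodesic. Since $\gamma'$ is not a geodesic, $\gamma'$ must therefore be the strictly longer outer path, so $\gamma'$ is a circuitous path. Because every proper subpath of $\gamma'$ is geodesic, $\gamma'$ is a minimal circuitous path, which is the desired contradiction. The principal obstacle throughout is the containment step $\gamma'\subseteq C$: the planar picture is intuitive, but making the substitution argument fully rigorous requires careful accounting of exactly which arcs of $\alpha\cup\beta$ replace which excursions and verification that the resulting length inequalities tip the right way, and this is where the full strength of the hypothesis that \emph{every} proper subpath of $\gamma'$ is a geodesic enters.
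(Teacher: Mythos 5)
Your overall skeleton matches the paper's: pass to a shortest non-geodesic subpath $\gamma'=\langle w_0,\dots,w_k\rangle$ so that every proper subpath is geodesic, dispose of the cases where $w_0$ and $w_k$ are equal or adjacent, show $\gamma'$ lies in the $q$-chain $C$, show it is an outer path, and show it is the longer outer path. But the steps you flag as delicate are exactly where your argument breaks, and in each case there is a much shorter route. For the containment $\gamma'\subseteq C$ you propose a planar ``excursion substitution'' argument and concede it is the principal obstacle; it is not needed. Since every proper subpath of $\gamma'$ is a geodesic, Corollary~\ref{corollary 1} forces $w_1$ to be a $w_k$-parent of $w_0$, and Lemma~\ref{lemma 4} says that the parents of a chain vertex again lie in the chain; iterating along the path puts every $w_i$ in $C$ in two lines. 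For ruling out the non-outer case, your claim that rerouting half of $\gamma'$ ``produces a strictly shorter non-geodesic subpath of $\gamma$'' is incoherent: a rerouted path is not a subpath of $\gamma$, so it cannot contradict the minimality of $|\gamma'|$. The clean move is to fix a geodesic $\delta$ from $w_0$ to $w_k$, which lies in $C$ by Theorem~\ref{theorem 9}, and observe that if $\gamma'$ and $\delta$ shared an interior vertex $z$ then $|\gamma'|=d_q(w_0,z)+d_q(z,w_k)\leq|\delta|=d_q(w_0,w_k)$, contradicting the assumption that $\gamma'$ is not a geodesic; Lemma~\ref{lemma 9} then forces both $\gamma'$ and $\delta$ to be outer paths.

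The final step rests on a false statement rather than a gap: the auxiliary fact you invoke, that the shorter outer path of a $q$-chain is always a geodesic, fails in general. For instance, in $\mathcal{F}_7$ take a $3$-chain with $|\mu_1|=|\mu_3|=1$ and $|\nu_2|=0$ (so $b_1=b_2$); then $|\alpha|=7$ and $|\beta|=10$, while the path $\langle w_0,a_1,b_1,a_2,w_k\rangle$ weaving across the chain has length $4$, so neither outer path is a geodesic. Chains of exactly this kind are why Lemmas~\ref{lemma 11} and~\ref{lemma 13} are needed later to pin down which outer paths are minimal circuitous. What rescues the conclusion is not that the shorter outer path is geodesic in general, but that in the situation at hand the geodesic $\delta$ has already been shown to be an outer path distinct from $\gamma'$, whence $\gamma'$ is the strictly longer one and hence circuitous; minimality is then immediate from the hypothesis on proper subpaths. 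Your forward direction and your treatment of the equal and adjacent endpoint cases are fine.
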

\begin{proof}
If a path backtracks or contains a circuitous subpath, then, by definition, it is not a geodesic path. To prove the converse, suppose that  $\gamma=\langle v_0,\dots,v_n\rangle$ is a path in $\mathcal{F}_q$ from $x$ to $y$ that is not a geodesic path. Suppose also that the path does not backtrack. We must prove that $\gamma$ contains a minimal circuitous subpath. To do this, we can, by restricting to a subpath of $\gamma$ if necessary,  assume that  every non-trivial subpath of $\gamma$ is a geodesic path. 

Let us first consider the cases in which $x$ and $y$ are either equal or adjacent. Elementary arguments show that, given the conditions just stated, the only possibility is that $\gamma=\langle x, v_1,v_2,y\rangle$, where $x$, $v_1$, $v_2$, and $y$ are distinct vertices, and $x$ and $y$ are adjacent. This can only happen if $q=4$ and $\gamma$ completes three sides of a face of $\mathcal{F}_q$, in which case $\gamma$ is a minimal circuitous path.

Suppose now that $x$ and $y$ are neither equal nor adjacent. We will show that $\gamma$ is contained within the $q$-chain from $x$ to $y$. Theorem~\ref{theorem 6} shows that any path from $x$ to $y$ must pass through one of the $y$-parents of $x$. Let $i$ be the smallest positive integer such that $v_i$ is one of the $y$-parents of $x$. Since $\langle x,v_1,\dots,v_i\rangle$ is a geodesic path, it must be that $i=1$, because otherwise $\langle x,v_i\rangle$ is a shorter path from $x$ to $v_i$.  Lemma~\ref{lemma 4} tells us that $v_1$ belongs to the $q$-chain from $x$ to $y$. Repeating this argument we see that all vertices $v_0,\dots,v_n$ belong to the $q$-chain from $x$ to $y$.

Now let $\delta$ be a geodesic path from $x$ to $y$, which, by Theorem~\ref{theorem 9}, also lies in the $q$-chain from $x$ to $y$. The paths $\gamma$ and $\delta$ can only intersect at the vertices $x$ and $y$, because if they intersect at some other vertex $z$, then one of the subpaths of $\gamma$  from $x$ to $z$ or from $z$ to $y$ is not a geodesic path. It follows from Lemma~\ref{lemma 9} that $\gamma$ and $\delta$ are both outer paths, and since $\gamma$ is not a geodesic path it must be a circuitous path. In fact $\gamma$ is a minimal circuitous path, as each of its non-trivial subpaths is a geodesic path.
\end{proof}

Recall the notation for the outer paths $\alpha$ and $\beta$ defined before Lemma~\ref{lemma 9}.

\begin{lemma}\label{lemma 12}
If $\alpha$ is a minimal circuitous path, then $\beta$ is the only geodesic path from $x$ to $y$.
\end{lemma}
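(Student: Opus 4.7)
The plan is to reduce the problem to showing that any geodesic path from $x$ to $y$ must coincide with $\beta$. Since $\alpha$ is a circuitous path we have $|\alpha|>|\beta|$, and in particular $\alpha$ itself is not a geodesic path (because $\beta$ is a competitor path of strictly smaller length). It then suffices to show that an arbitrary geodesic path $\gamma$ from $x$ to $y$ equals $\beta$.

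Let $\gamma$ be such a geodesic. By Theorem~\ref{theorem 9}, $\gamma$ lies in the $q$-chain from $x$ to $y$, and since geodesic paths trivially do not backtrack, the hypotheses of Lemma~\ref{lemma 9} are satisfied apart from outerness. I split into two cases. If $\gamma$ is an outer path, then $\gamma=\alpha$ or $\gamma=\beta$; since $\alpha$ is not a geodesic, $\gamma=\beta$. If $\gamma$ is not an outer path, Lemma~\ref{lemma 9} applied with $\chi=\gamma$ and the other path taken to be $\alpha$ produces a vertex $z$ lying on both $\gamma$ and $\alpha$ with $z\neq x,y$.

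The key step, and the only nontrivial point, is to exploit the minimality of $\alpha$ at this vertex $z$. Writing $\alpha=\langle v_0,v_1,\dots,v_n\rangle$ with $v_0=x$, $v_n=y$, and $z=v_k$ for some $1\leq k\leq n-1$, the subpath $\langle v_0,\dots,v_k\rangle$ is contained in $\langle v_0,\dots,v_{n-1}\rangle$ and the subpath $\langle v_k,\dots,v_n\rangle$ is contained in $\langle v_1,\dots,v_n\rangle$. Both of these larger subpaths are geodesic paths by the definition of minimal circuitous path (as recorded immediately before Theorem~\ref{theorem 16}), and subpaths of geodesic paths are geodesic, so $d_q(x,z)=k$ and $d_q(z,y)=n-k$, giving $d_q(x,z)+d_q(z,y)=|\alpha|$. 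Since $\gamma$ passes through $z$, its length satisfies
\[
|\gamma|\geq d_q(x,z)+d_q(z,y)=|\alpha|>|\beta|,
\]
contradicting the fact that $\gamma$ is a geodesic and $\beta$ is a path from $x$ to $y$ (so $|\gamma|=d_q(x,y)\leq|\beta|$). Hence the second case cannot occur, and every geodesic from $x$ to $y$ equals $\beta$.

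I do not anticipate any real obstacle: everything follows once one uses Theorem~\ref{theorem 9} to force $\gamma$ into the $q$-chain, Lemma~\ref{lemma 9} to force $\gamma$ to meet $\alpha$ at an interior vertex when $\gamma$ is not outer, and the minimality hypothesis to split $|\alpha|$ into two geodesic pieces at that interior vertex. The only place one must be a little careful is in translating ``non-trivial subpaths of $\alpha$ are geodesic'' into the equality $d_q(x,z)+d_q(z,y)=|\alpha|$ for the specific vertex $z$ produced by Lemma~\ref{lemma 9}, but this is immediate from the characterisation of minimal circuitous paths given in the text.
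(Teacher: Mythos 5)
Your proposal is correct and follows essentially the same route as the paper: confine the geodesic to the $q$-chain via Theorem~\ref{theorem 9}, then use the minimality of $\alpha$ to split $|\alpha|$ into two geodesic pieces at an interior vertex shared with the geodesic, yielding the length contradiction $|\gamma|\geq d_q(x,z)+d_q(z,y)=|\alpha|>|\beta|\geq|\gamma|$. The only cosmetic difference is that you invoke Lemma~\ref{lemma 9} and an outer/non-outer dichotomy to produce the intersection vertex and to conclude $\gamma=\beta$, whereas the paper argues directly that a geodesic containing some $a_i$ would force $\alpha$ to be geodesic, and hence must avoid all the $a_i$ and therefore equal $\beta$.
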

\begin{proof}
Let $\delta$ be a geodesic path from $x$ to $y$. By Theorem~\ref{theorem 9}, this path is contained in the $q$-chain from $x$ to $y$. Suppose that it contains one of the vertices $a_i$, where $1\leq i\leq m-1$. Since $\mu_1\dotsb \mu_i$ is a \emph{geodesic} path from $x$ to $a_i$ (as it is a non-trivial subpath of $\alpha$) and $\mu_{i+1}\dotsb \mu_m$ is a geodesic path from $a_i$ to $y$, it follows that $\alpha$ is a geodesic path, which is a contradiction. Therefore $\delta$ does not contain any of the vertices $a_i$, so it must equal $\beta$.
\end{proof}

We can characterise the minimal circuitous paths precisely, and we do so in Lemma~\ref{lemma 10} and Theorem~\ref{theorem 17}, below.

\begin{lemma}\label{lemma 10}
Let $q$ be equal to either $2r$ or $2r+1$, where $r\geq 2$. If $\alpha$ is a minimal circuitous path on a $q$-chain of length $1$ (that is, $m=1$), then $|\mu_1|=r+1$.
\end{lemma}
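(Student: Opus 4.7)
The plan is to combine the circuitousness of $\alpha$ (which will give a lower bound on $|\mu_1|$) with the minimality hypothesis (which will give a matching upper bound). Since $m=1$, the $q$-chain consists of the single $q$-gon $P_1$, and the outer paths $\mu_1$ and $\nu_1$ traverse the two arcs of its boundary in opposite directions from $x$ to $y$. In particular $|\mu_1|+|\nu_1|=q$.

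For the lower bound, I would use that $\alpha=\mu_1$ is circuitous, so by definition $|\mu_1|>|\nu_1|$. Together with $|\mu_1|+|\nu_1|=q$ this forces $|\mu_1|>q/2$, and since $|\mu_1|$ is an integer we obtain $|\mu_1|\geq r+1$ whether $q=2r$ or $q=2r+1$.

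For the upper bound, I would use that $\alpha$ is a \emph{minimal} circuitous path, so the subpath $\mu_1'$ of $\mu_1$ obtained by deleting its final vertex $y$ is a geodesic path in $\mathcal{F}_q$. Write $\mu_1=\langle x=u_0,u_1,\dots,u_k=y\rangle$, so that $\mu_1'$ joins $x$ to $u_{k-1}$, both of which are vertices of the face $P_1$. By Corollary~\ref{corollary 3}, any geodesic between vertices of $P_1$ is obtained by traversing the edges of $P_1$, so the only competitor for $\mu_1'$ is the complementary arc along the boundary of $P_1$, which proceeds from $x$ through $y$ (via $\nu_1$) and then along a single edge to $u_{k-1}$. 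This competitor has length $|\nu_1|+1=q-|\mu_1|+1$, whereas $|\mu_1'|=|\mu_1|-1$. For $\mu_1'$ to be a geodesic I therefore need $|\mu_1|-1\leq q-|\mu_1|+1$, that is, $|\mu_1|\leq (q+2)/2$. Integrality gives $|\mu_1|\leq r+1$ in both parity cases, and combining with the lower bound yields $|\mu_1|=r+1$.

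There is no real obstacle; the only point that requires care is the application of Corollary~\ref{corollary 3} in the potentially ambiguous case when $q$ is even and the two vertices are opposite, but even then each of the two candidate geodesics has the same length, so the length comparison above is unaffected.
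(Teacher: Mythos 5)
Your proof is correct and follows essentially the same route as the paper: the lower bound $|\mu_1|\geq r+1$ from circuitousness plus $|\mu_1|+|\nu_1|=q$, and the upper bound from minimality applied to the subpath obtained by deleting the final vertex. The only difference is that you spell out, via Corollary~\ref{corollary 3}, why that truncated subpath fails to be geodesic when $|\mu_1|>r+1$, a step the paper leaves as a parenthetical remark.
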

\begin{proof}
Since $\alpha$ is a circuitous path, and $m=1$, we have $|\mu_1|>|\nu_1|$. Since $|\mu_1|+|\nu_1|\geq 2r$, we see that $|\mu_1|\geq r+1$. If $|\mu_1|>r+1$, then $\alpha$ contains a non-geodesic subpath (just remove the final vertex from $\mu_1$). Therefore $|\mu_1|=r+1$.
\end{proof}

The next two lemmas are needed to prove Theorem~\ref{theorem 17}.

\begin{lemma}\label{lemma 11}
Let $q$ be equal to either $2r$ or $2r+1$, where $r\geq 2$. If $\alpha$ is a minimal circuitous path on a $q$-chain of length at least $2$ (that is, $m\geq 2$), then $r-1 \leq |\mu_i| \leq r$ for $i=1,\dots,m$. Furthermore, $|\mu_1|=|\mu_m|=r$.
\end{lemma}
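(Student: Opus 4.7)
My plan is to combine the minimality of $\alpha$ with Corollary~\ref{corollary 3} (which governs geodesic paths inside a single face) and the uniqueness clause of Lemma~\ref{lemma 12} (stating that $\beta$ is the only geodesic from $x$ to $y$) to pin down each $|\mu_i|$.

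First I would observe that every $\mu_i$ and every $\nu_i$ is itself a geodesic path in $\mathcal{F}_q$. The $\nu_i$ inherit this from $\beta$, which is geodesic by Lemma~\ref{lemma 12}, since subpaths of geodesic paths are geodesic. For the $\mu_i$, minimality of $\alpha=\langle v_0,\dots,v_n\rangle$ asserts that both $\langle v_0,\dots,v_{n-1}\rangle$ and $\langle v_1,\dots,v_n\rangle$ are geodesic, and since $m\geq 2$ each $\mu_i$ lies inside at least one of these.

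Next I would establish the upper bound and dispose of the odd case. Set $\varepsilon_i=1$ if $i\in\{1,m\}$ and $\varepsilon_i=2$ otherwise, so that $|\mu_i|+|\nu_i|=q-\varepsilon_i$. Inside $P_i$ the two boundary paths between the endpoints of $\mu_i$ have lengths $|\mu_i|$ and $|\nu_i|+\varepsilon_i$; Corollary~\ref{corollary 3} together with the geodesicity of $\mu_i$ gives $|\mu_i|\leq|\nu_i|+\varepsilon_i$, and combining with $|\mu_i|+|\nu_i|=q-\varepsilon_i$ yields $|\mu_i|\leq q/2$, hence $|\mu_i|\leq r$. The symmetric argument gives $|\nu_i|\leq r$, which when substituted back into $|\mu_i|+|\nu_i|=q-\varepsilon_i$ produces $|\mu_i|\geq q-\varepsilon_i-r$. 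For $q=2r+1$ this immediately gives $|\mu_i|\geq r-1$ for interior $i$ and $|\mu_i|\geq r$ (hence $=r$) for $i\in\{1,m\}$, completing the odd case.

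The main obstacle is the even case $q=2r$, where the same substitution only yields $|\mu_i|\geq r-2$ (interior) and $|\mu_i|\geq r-1$ (ends). To rule out the extremal values $|\mu_i|=r-2$ (interior) and $|\mu_i|=r-1$ (ends) I would argue by contradiction: in either scenario $|\nu_i|=r$, so the two boundary paths in $P_i$ joining the endpoints of $\nu_i$ both have length $r$ and are therefore, by Corollary~\ref{corollary 3}, two distinct geodesics in $\mathcal{F}_q$. Substituting the alternative $P_i$-path for $\nu_i$ inside $\beta$ produces another path from $x$ to $y$ of length $|\beta|$, distinct from $\beta$ because the interior vertices of the two $P_i$-paths lie on opposite sides of $P_i$. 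This second geodesic from $x$ to $y$ contradicts the uniqueness asserted by Lemma~\ref{lemma 12}, which completes the proof.
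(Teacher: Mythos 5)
Your proof is correct and rests on the same mechanism as the paper's: rerouting $\beta$ through the face $P_i$ (via the edges $\{a_{i-1},b_{i-1}\}$ and $\{a_i,b_i\}$) to manufacture a second geodesic from $x$ to $y$, contradicting the uniqueness assertion of Lemma~\ref{lemma 12}. The only real difference is organisational: you first observe that each $\nu_i$, being a subpath of the geodesic $\beta$, satisfies $|\nu_i|\leq r$, which disposes of the odd case outright, whereas the paper applies the rerouting estimate $|\beta|-|\delta|\geq 0$ uniformly to both parities.
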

\begin{proof}
Suppose that $|\mu_i|\geq r+1$. Then $\mu_i$ is not a geodesic path, which is impossible, as it is a subpath of $\alpha$. Therefore $|\mu_i|\leq r$.

Suppose next that $|\mu_i|\leq r-2$, where $1<i<m$. Let $\delta$ be the path from $x$ to $y$ given by
\[
\delta = \nu_1\dotsb \nu_{i-1}\langle b_{i-1},a_{i-1}\rangle\mu_i\langle a_{i},b_{i}\rangle\nu_{i+1}\dotsb \nu_m.
\]
Since $|\mu_i|+|\nu_i|+2\geq 2r$, we see that
\[
|\beta|-|\delta| = |\nu_i|-|\mu_i|-2 \geq 2(r-2-|\mu_i|)\geq 0.
\]
Therefore $|\delta|\leq |\beta|$, so $\delta$ is a geodesic path from $x$ to $y$, which contradicts Lemma~\ref{lemma 12}. Therefore $|\mu_i|\geq r-1$ when $1<i<m$.

Finally, we prove that $|\mu_1|=r$; the proof that $|\mu_m|=r$ is similar and omitted. Suppose that $|\mu_1|\leq r-1$. Let $\delta$ be the path from $x$ to $y$ given by
\[
\delta = \mu_1\langle a_{1},b_{1}\rangle\nu_{2}\dotsb \nu_m.
\]
Since $|\mu_1|+|\nu_1|+1\geq 2r$, we see that
\[
|\beta|-|\delta| = |\nu_1|-|\mu_1|-1 \geq 2(r-1-|\mu_i|)\geq 0.
\]
Therefore $\delta$ is a geodesic path, which contradicts Lemma~\ref{lemma 12}. Therefore $|\mu_1|\geq r$. Since we have already proved that $|\mu_1|\leq r$, we conclude that $|\mu_1|=r$.
\end{proof}

\begin{lemma}\label{lemma 13}
Let $q=2r+1$, where $r\geq 2$. If $\alpha$ is a minimal circuitous path on a $q$-chain of length at least $3$ (that is, $m\geq 3$), then $|\mu_2|=|\mu_{m-2}|=r$. Also, for no integer $i$ with $1< i<m-1$ are $|\mu_i|$ and $|\mu_{i+1}|$ both equal to $r-1$.
\end{lemma}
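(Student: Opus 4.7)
The plan is to mimic the proof of Lemma~\ref{lemma 11}: whenever $\alpha$ fails the stated condition I will exhibit a mixed clockwise/anticlockwise path $\delta$ from $x$ to $y$ with $|\delta|=|\beta|$ but $\delta\neq\beta$, contradicting the uniqueness of $\beta$ as a geodesic given by Lemma~\ref{lemma 12}. Before starting I record the boundary-length identities that drive the arithmetic: since $q=2r+1$, the end faces satisfy $|\mu_1|+|\nu_1|=|\mu_m|+|\nu_m|=q-1=2r$ (only one boundary edge is shared with an adjacent face), whereas each interior face $P_i$ with $1<i<m$ satisfies $|\mu_i|+|\nu_i|=q-2=2r-1$. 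Combined with $|\mu_1|=|\mu_m|=r$ from Lemma~\ref{lemma 11} this gives $|\nu_1|=|\nu_m|=r$.

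By the evident symmetry of reversing the chain, it suffices for the first assertion to prove $|\mu_2|=r$. Lemma~\ref{lemma 11} tells us that $|\mu_2|\in\{r-1,r\}$, so I suppose for contradiction that $|\mu_2|=r-1$, which forces $|\nu_2|=r$. Consider
\[
\delta=\mu_1\mu_2\langle a_2,b_2\rangle\nu_3\cdots\nu_m,
\]
a legitimate path in the $q$-chain that is distinct from $\beta$ because $\delta$ leaves $x$ along $\mu_1$ whereas $\beta$ leaves along $\nu_1$. A direct count gives
\[
|\delta|=r+(r-1)+1+\sum_{j=3}^m|\nu_j|=|\nu_1|+|\nu_2|+\sum_{j=3}^m|\nu_j|=|\beta|,
\]
so $\delta$ is a second geodesic from $x$ to $y$, contradicting Lemma~\ref{lemma 12}.

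For the second assertion, suppose $|\mu_i|=|\mu_{i+1}|=r-1$ for some $1<i<m-1$. The interior-face identity gives $|\nu_i|=|\nu_{i+1}|=r$, and I take
\[
\delta=\nu_1\cdots\nu_{i-1}\langle b_{i-1},a_{i-1}\rangle\mu_i\mu_{i+1}\langle a_{i+1},b_{i+1}\rangle\nu_{i+2}\cdots\nu_m.
\]
The altered middle block contributes $1+(r-1)+(r-1)+1=2r$ edges, matching the $|\nu_i|+|\nu_{i+1}|=2r$ edges it replaces, so $|\delta|=|\beta|$; and $\delta\neq\beta$ because the two paths diverge at $b_{i-1}$. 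Once again Lemma~\ref{lemma 12} is contradicted.

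I do not anticipate a serious obstacle. The only care needed is to apply the $q-1$ versus $q-2$ formula to the correct faces so that the length arithmetic really yields $|\delta|=|\beta|$, and to verify that the range hypotheses ($m\geq 3$ in the first part and $1<i<m-1$ in the second) ensure that all named vertices are distinct from $x$ and $y$, so that $\delta$ is an honest path in the $q$-chain.
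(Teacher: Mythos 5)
Your proof is correct and is essentially the paper's own argument: in each case you exhibit the same alternative path $\delta$ and derive a contradiction with Lemma~\ref{lemma 12}, the only difference being that you make explicit the face-boundary arithmetic ($|\mu_i|+|\nu_i|=q-1$ for the end faces and $q-2$ for interior faces) that the paper leaves implicit. Incidentally, your displayed paths also correct two slips in the paper's versions, which write $\nu_2$ where $\nu_3$ is meant and $n_{i+1}$ where $b_{i+1}$ is meant.
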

\begin{proof}
Let us first prove that $|\mu_2|=r$. The proof that $|\mu_{m-2}|=r$ is similar and omitted. We know from Lemma~\ref{lemma 11} that $|\mu_2|$ is either $r-1$ or $r$. If $|\mu_2|=r-1$, then the path
\[
\delta = \mu_1\mu_2\langle a_{2},b_{2}\rangle\nu_{2}\dotsb \nu_m
\]
is a geodesic path from $x$ to $y$, which contradicts Lemma~\ref{lemma 12}. Therefore $|\mu_2|=r$.

For the second assertion of the lemma, suppose that $|\mu_i|=|\mu_{i+1}|=r-1$ for some integer $i$, where $1< i<m$. In this case, the path
\[
\delta = \nu_1\dotsb \nu_{i-1}\langle b_{i-1},a_{i-1}\rangle\mu_{i}\mu_{i+1}\langle a_{i+1},n_{i+1}\rangle\nu_{i+2}\dotsb \nu_m
\]
is a geodesic path from $x$ to $y$, which contradicts Lemma~\ref{lemma 12}.
\end{proof}

\begin{theorem}\label{theorem 17}
Suppose that $q\geq 4$. Let $\alpha$ be a clockwise outer path  on a $q$-chain of length at least $2$ (that is, $m\geq 2$). If $\alpha$ is a minimal circuitous path, then
\begin{equation}\label{equation 1}
|\mu_1|,\dots,|\mu_m| =
\begin{cases}
r,r-1,r-1,\dots,r-1,r \ (m\geq 2), & \text{if $q=2r$},\\
r,r,r-1,r,r-1,\dots,r-1,r,r \ (m\geq 3), & \text{if $q=2r+1$}.
\end{cases}
\end{equation}
\end{theorem}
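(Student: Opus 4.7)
The plan is to exploit that $\alpha$ is minimal circuitous, so every proper subpath of $\alpha$ is a geodesic. In particular, for each $1 \le i \le m-1$ the initial subpath $\mu_1\mu_2\cdots\mu_i$ is a geodesic path from $x$ to $a_i$, and symmetrically each final subpath $\mu_i\mu_{i+1}\cdots\mu_m$ is a geodesic from $a_{i-1}$ to $y$.

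The key move is to compare $\mu_1\mu_2\cdots\mu_i$ with the alternative route
\[
x = b_0 \to b_1 \to b_2 \to \cdots \to b_i \to a_i
\]
that first traverses the opposite arcs $\nu_1,\dots,\nu_i$ and then uses the shared edge $\{b_i,a_i\}$ of $P_i$ and $P_{i+1}$. Since $\mu_1\cdots\mu_i$ is a geodesic, this comparison yields
\[
\sum_{j=1}^i |\mu_j| \;\le\; \sum_{j=1}^i |\nu_j| + 1 \qquad (1 \le i \le m-1),
\]
and symmetrically $\sum_{j=i}^m |\mu_j| \le \sum_{j=i}^m |\nu_j| + 1$ for $2 \le i \le m$. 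Since $|\mu_1|+|\nu_1|=|\mu_m|+|\nu_m|=q-1$ and $|\mu_j|+|\nu_j|=q-2$ for interior $j$, these inequalities become linear constraints on the values $|\mu_j|$, which by Lemma~\ref{lemma 11} already lie in $\{r-1,r\}$ for interior $j$ (and equal $r$ at the boundary).

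For $q = 2r$, one has $|\nu_1|=r-1$, and each interior $|\mu_j|-|\nu_j|=2|\mu_j|-(2r-2)\in\{0,2\}$ is non-negative. Setting $i = m-1$ and using $|\mu_1|-|\nu_1|=1$, the inequality reduces to $\sum_{j=2}^{m-1}(|\mu_j|-|\nu_j|) \le 0$. Non-negativity of every summand then forces $|\mu_j|=r-1$ for every $1<j<m$, which together with the boundary values $|\mu_1|=|\mu_m|=r$ is exactly the claimed pattern.

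For $q = 2r+1$, set $\epsilon_j=|\mu_j|-|\nu_j|$, so $\epsilon_1=\epsilon_m=0$ and each interior $\epsilon_j\in\{-1,+1\}$. The two inequalities translate into the forward and backward partial-sum bounds $\sum_{j=2}^i \epsilon_j \le 1$ and $\sum_{j=i}^{m-1} \epsilon_j \le 1$. Combined with Lemma~\ref{lemma 13} (which supplies $\epsilon_2=\epsilon_{m-2}=1$ and forbids two consecutive interior $\epsilon_j$'s equal to $-1$), I march inward from $\epsilon_2=1$: the forward bound forces $\epsilon_3=-1$, the forbidden-consecutive-$-1$'s condition then forces $\epsilon_4=1$, the forward bound again forces $\epsilon_5=-1$, and so on, producing the alternating pattern. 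Applying the backward bound anchored at $\epsilon_{m-2}=1$ gives the same alternation from the right-hand end, and fitting the two together delivers the interior sequence described in \eqref{equation 1}.

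The main obstacle is the $q=2r+1$ case, where I must verify that the forward-induced alternation anchored at $\epsilon_2=1$ and the backward-induced one anchored at $\epsilon_{m-2}=1$ mesh consistently across the middle of the chain, and the parity of $m$ enters subtly. For certain parities of $m$ the constraints may have no simultaneous solution, in which case no minimal circuitous path exists and the conclusion of the theorem is vacuous.
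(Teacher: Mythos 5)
Your partial-sum formulation is sound, and at bottom it is the same comparison the paper makes: pitting the geodesic initial subpath $\mu_1\dotsb\mu_i$ against the competitor $\nu_1\dotsb\nu_i\langle b_i,a_i\rangle$ is exactly the paper's device of exhibiting a non-geodesic proper subpath of $\alpha$ whenever the lengths deviate from the pattern; you have simply repackaged it as a ballot-type inequality on the increments $\epsilon_j$. Your even case is complete and correct: the inequality at $i=m-1$ together with Lemma~\ref{lemma 11} forces $|\mu_j|=r-1$ at every interior index.

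The odd case, however, has a genuine gap, sitting precisely where you flag your ``main obstacle''. You anchor the right-hand end of the alternation at $\epsilon_{m-2}=1$, quoting Lemma~\ref{lemma 13} literally. But in the target pattern \eqref{equation 1} one has $|\mu_{m-2}|=r-1$ for $m\geq 5$, i.e.\ $\epsilon_{m-2}=-1$: the statement of Lemma~\ref{lemma 13} contains a misprint, and what its proof (by the symmetric argument) actually establishes --- and what the paper's own proof of Theorem~\ref{theorem 17} invokes --- is $|\mu_{m-1}|=r$, i.e.\ $\epsilon_{m-1}=1$. With the anchor as you state it, your forward march gives $\epsilon_j=(-1)^j$ for interior $j$, so $\epsilon_{m-2}=1$ forces $m$ even and a terminal block $r,r-1,r$, which contradicts \eqref{equation 1} rather than delivering it; so ``fitting the two together'' does not yield the claimed sequence. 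With the corrected anchor the argument does close up: the forward alternation determines all interior $\epsilon_j$, and $\epsilon_{m-1}=1$ then forces $m$ odd and gives exactly the sequence in \eqref{equation 1}, while for even $m$ the constraints are inconsistent (e.g.\ for $m=4$ your inequality at $i=3$ reads $\epsilon_2+\epsilon_3\leq 1$, clashing with $\epsilon_2=\epsilon_3=1$), so no minimal circuitous path exists --- your ``vacuous'' remark is the right instinct but needs to be carried out. Since you leave the meshing unresolved and the anchor you cite points the wrong way, the odd case is not proved as written, though the repair is purely local.
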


The sequences $|\mu_1|,\dots,|\mu_m|$ described above are
\[
r,r,\qquad r,r-1,r, \qquad r, r-1,r-1,r,
\]
and so on, when $q=2r$, and
\[
r,r,r\qquad r,r,r-1,r,r \qquad r,r, r-1,r,r-1,r,r,
\]
and so on, when $q=2r+1$. There is a converse to Theorem~\ref{theorem 17}, which says that if $\alpha$ is an outer path and $|\mu_1|,\dots,|\mu_m|$ is one of these sequences, then $\alpha$ is a minimal circuitous path. We do not prove this converse result as we do not need it.

\begin{proof}
Suppose first that $\alpha$ is an outer path and $|\mu_1|,\dots,|\mu_m|$ takes one of the values given in \eqref{equation 1}; we do not yet assume that $\alpha$ is a minimal circuitous path. Let us prove that $\alpha$ is not a geodesic path.  To do this, we will show that $|\beta|<|\alpha|$. This is true when $q=2r$, because  $|\alpha|=m(r-1)+2$ and $|\alpha|+|\beta|=2m(r-1)+2$, so $|\beta|=m(r-1)$. It is also true when $q=2r+1$, because $|\alpha|=(2r-1)m/2+3/2$ and $|\alpha|+|\beta|=(2r-1)m+2$, so $|\beta|=(2r-1)m/2+1/2$.

Now let us assume that $\alpha$ is a minimal circuitous path. By Lemma~\ref{lemma 11}, $|\mu_i|$ is either $r-1$ or $r$ for $i=1,\dots,m$.

Suppose that $q=2r$. Lemma~\ref{lemma 11} tells us that $|\mu_1|=|\mu_m|=r$. Suppose that $|\mu_i|=r$ for some integer $i$ with $1<i<m$; in fact, let $i$ be the smallest such integer. Then $\mu_1\dotsb \mu_i$ is not a geodesic path, as we demonstrated at the start of this proof, which is a contradiction, as it is a subpath of $\mu_1\dotsb \mu_m$. Hence $|\mu_i|=r-1$ for $1<i<m$, as required.

Suppose now that $q=2r+1$. Lemmas~\ref{lemma 11} and \ref{lemma 13} tell us that $|\mu_1|=|\mu_2|=|\mu_{m-1}|=|\mu_m|=r$. The path $\mu_1\dotsb \mu_m$ is not circuitous when $m$ is $2$ or $4$ (because it is the same length as $\beta$ in each case). However, it is circuitous when $m=3$.

Let us assume then that $m\geq 5$. We know that $|\mu_3|=r-1$, because if $|\mu_3|=r$, then the subpath $\mu_1\mu_2\mu_3$ of $\alpha$ is not a geodesic path. The second assertion of Lemma~\ref{lemma 13} tells us that $|\mu_4|=r$. Next, if $|\mu_5|=r$, then, as we saw at the start of this proof, $\mu_1\dotsb \mu_5$ is not a geodesic path, which can only be so if $m=5$. Therefore $|\mu_5|=r-1$ when $m>5$. Arguing repeatedly in this fashion, we see that $|\mu_1|,\dots,|\mu_m| =r,r,r-1,r,r-1,\dots,r-1,r,r$, where $m$ is odd and at least~3.
\end{proof}

The next lemma allows us to move from paths to continued fractions.

\begin{lemma}\label{lemma 15}
Let $\langle v_0,v_1,\dots,v_n\rangle$, where $v_0=\infty$, be the path of convergents of a Rosen continued fraction $[b_1,\dots,b_n]_q$. Suppose that $\langle v_k,\dots,v_l\rangle$, where  $0\leq k<l\leq n$ and $k+2\leq l$, is a clockwise outer path $\alpha=\mu_1\dotsb \mu_m$  such that $|\mu_i|\geq 1$ for $i=1,\dots,m$. Then 
\[
b_{k+2},\dots,b_l = 1^{[|\mu_1|-1]},2,1^{[|\mu_2|-1]},2,\dots,2,1^{[|\mu_m|-1]}.
\]
\end{lemma}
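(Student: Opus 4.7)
By Lemma~\ref{lemma 7}, $b_i=\phi(v_{i-2},v_{i-1},v_i)$ for each $i$ with $2\leq i\leq n$, so the plan is to evaluate $\phi$ at each intermediate vertex $v_p$ of the subpath $\langle v_k,\dots,v_l\rangle$, that is, at each $p$ with $k<p<l$, and then read off the resulting sequence. Introducing the cumulative indices $e_0=k$ and $e_j=k+|\mu_1|+\dots+|\mu_j|$ so that $v_{e_j}=a_j$, I split the middle vertices into two types: the \emph{interior} vertices of some $\mu_j$, namely those $p$ with $e_{j-1}<p<e_j$, and the \emph{transition} vertices $v_p=a_j$ for $1\leq j\leq m-1$.

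In each case I use the $G_q$-invariance of $\phi$ to move $v_p$ to $\infty$ by a conformal element of $G_q$, after which $\phi$ admits the explicit formula $\phi(a,\infty,c)=(c-a)/\lambda_q$. At an interior vertex, the image of $P_j$ is a face of $\mathcal{F}_q$ incident to $\infty$, and therefore has its two neighbours of $\infty$ of the form $c\lambda_q$ and $(c+1)\lambda_q$ for some integer $c$; these are the images of $v_{p-1}$ and $v_{p+1}$, in that order, by the clockwise orientation of $\mu_j$, so $\phi=1$. At a transition vertex $a_j$, both $P_j$ and $P_{j+1}$ map to faces incident to $\infty$ sharing an edge whose other endpoint (the image of $b_j$) may be taken as $(c+1)\lambda_q$; the ``far'' neighbours of $\infty$ on these two faces are then $c\lambda_q$ and $(c+2)\lambda_q$, which are the images of $v_{p-1}$ and $v_{p+1}$, giving $\phi=2$.

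Collecting these contributions along the subpath in order produces exactly $1^{[|\mu_1|-1]},2,1^{[|\mu_2|-1]},2,\dots,2,1^{[|\mu_m|-1]}$, and the length check $\sum_{j=1}^{m}(|\mu_j|-1)+(m-1)=\sum_{j=1}^{m}|\mu_j|-1=l-k-1$ agrees with the number of entries $b_{k+2},\dots,b_l$. The main obstacle is the orientation bookkeeping in the interior case: I must verify that under the conformal map sending $v_p$ to $\infty$, the clockwise cyclic direction around $P_j$ corresponds to the ``anticlockwise around $\infty$'' direction used to label edges in the definition of $\phi$, so that $v_{p-1}$ is sent to $c\lambda_q$ rather than to $(c+1)\lambda_q$ (and analogously at a transition vertex). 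This is ultimately a consequence of orientation preservation by elements of $G_q$, but it requires carefully unwinding the conventions for cyclic orders on faces of $\mathcal{F}_q$ together with the labelling of edges incident to $\infty$.
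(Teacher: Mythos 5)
Your proposal is correct and follows essentially the same route as the paper: reduce to computing $\phi(v_{i-1},v_i,v_{i+1})$ via Lemma~\ref{lemma 7}, then evaluate $\phi$ case by case, getting $1$ at vertices interior to some $\mu_j$ (where the three consecutive vertices lie clockwise on a single face) and $2$ at the transition vertices $a_j$ (where they lie clockwise on the $2q$-gon formed by $P_j$ and $P_{j+1}$), using $G_q$-invariance to map the middle vertex to $\infty$. The orientation bookkeeping you flag as the main obstacle is exactly the point the paper also leaves implicit, so your write-up is at least as complete as the original.
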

\begin{proof}
Recall the function $\phi$ defined near the end of Section~\ref{section 2}. Lemma~\ref{lemma 7} tells us that $\phi(v_{i-1},v_{i},v_{i+1})=b_{i+1}$ for $i=k+1,\dots,l-1$, so we need only calculate these values of $\phi$ one by one. If $v_i$ is not one of the vertices $a_j$, then $v_{i-1}$, $v_i$, and $v_{i+1}$ lie in that order clockwise round a face of $\mathcal{F}_q$. Therefore $\phi(v_{i-1},v_{i},v_{i+1})=1$. If $v_i$ is one of the vertices $a_j$, then because $|\mu_{i}|$ and $|\mu_{i+1}|$ are both at least 1, $v_{i-1}$, $v_i$, and $v_{i+1}$ lie in that order clockwise around a $2q$-gon comprised of two adjacent faces of $\mathcal{F}_q$ that meet along an edge that has $v_i$ as a vertex. If we map $v_i$ to $\infty$ by an element of $G_q$, then we see that  $\phi(v_{i-1},v_{i},v_{i+1})=2$. Therefore $b_{k+2},\dots,b_l$ is of the given form.
\end{proof}

Finally, we are able to prove the second part of Theorem~\ref{theorem 3}.

\begin{proof}[Proof of Theorem~\ref{theorem 3}: part II]
Suppose that $[b_1,\dots,b_n]_q$ is not a geodesic Rosen continued fraction. We must prove that the sequence $b_2,\dots,b_n$ either (i) contains a $0$ term; (ii) contains a subsequence of consecutive terms of the form $\pm  1^{[r]}$; or (iii) contains a subsequence of consecutive terms of the form
\[
\pm (1^{[r-1]},2,1^{[r-2]},2,1^{[r-2]},\dots,1^{[r-2]},2,1^{[r-1]}).
\]

Since the path of convergents of $\gamma=\langle \infty, v_1,\dots,v_n\rangle$ is not a geodesic path, Theorem~\ref{theorem 16}  tells us that it either backtracks or contains a minimal circuitous path. If it backtracks, then, by Lemma~\ref{lemma 14}, one of the coefficients $b_2,\dots,b_n$ is $0$, which is case (i). Otherwise, $\gamma$ contains a minimal circuitous subpath. Suppose for the moment that this minimal circuitous subpath is a \emph{clockwise} outer path, namely $\alpha=\mu_1\dotsb \mu_m$. Lemma~\ref{lemma 10} and Theorem~\ref{theorem 17} tell us that either $m=1$ and $|\mu_1|=r+1$ or $m>1$ and
\[
|\mu_1|,\dots,|\mu_m| = r,r-1,r-1,\dots,r-1,r.
\]
It follows from Lemma~\ref{lemma 15} that there are integers $0\leq k<l\leq n$ with $k+2\leq l$ such that the sequence $b_{k+2},\dots,b_l$ is $1^{[r]}$, when $m=1$, or
\[
1^{[r-1]},2,1^{[r-2]},2,1^{[r-2]},\dots,1^{[r-2]},2,1^{[r-1]},
\]
when $m>1$. These are cases (ii) and (iii).

Earlier we assumed that $\gamma$ contained a minimal circuitous subpath that was a \emph{clockwise} outer path; suppose now that it is an \emph{anticlockwise} outer path $\beta$. Recall that the map $\kappa(z)=-\overline{z}$ is an anticonformal transformation of the upper half-plane that induces an automorphism of $\mathcal{F}_q$. As we saw in Lemma~\ref{lemma 18}, the path of convergents of the continued fraction $[-b_1,\dots,-b_n]_q$ is $\kappa(\gamma)$. The path $\kappa(\beta)$ is a minimal circuitous subpath of $\kappa(\gamma)$, but it is a clockwise outer path rather than an anticlockwise outer path, as $\kappa$ reverses the orientation of cycles in $\mathcal{F}_q$. Since, by Lemma~\ref{lemma 18}, $[-b_1,\dots,-b_n]_q$ is not a geodesic Rosen continued fraction, the argument from the previous paragraph shows that $-b_2,\dots,-b_n$ contains a subsequence of consecutive terms of the form $1^{[r]}$ or $1^{[r-1]},2,1^{[r-2]},\dots,1^{[r-2]},2,1^{[r-1]}$. Therefore we see once again that one of statements (ii) or (iii) holds.
\end{proof}

The proof of the second part of Theorem~\ref{theorem 12} mirrors the proof given above almost exactly, but with the sequence of $1$s and $2$s and the sequence of $r$s and $(r-1)$s suitably modified.

\section{Infinite Rosen continued fractions}\label{section 8}

So far, for simplicity, we have focussed on finite continued fractions; however, most of our theorems and techniques generalise in a straightforward fashion to infinite continued fractions. Here we briefly discuss the theory of infinite Rosen continued fractions, and in particular we prove Theorem~\ref{theorem 13}.

An \emph{infinite path} in $\mathcal{F}_q$ is a sequence of vertices $v_0,v_1,\dotsc$ such that $v_{i-1}\sim v_i$ for $i=1,2,\dotsc$. The convergents of an infinite Rosen continued fraction form an infinite path $\langle \infty,v_1,v_2,\dotsc\rangle$ in $\mathcal{F}_q$, and conversely each infinite path of this type is comprised of the convergents of a unique infinite Rosen continued fraction.

To prove Theorem~\ref{theorem 13}, we use the following lemma.

\begin{lemma}\label{lemma 8}
Suppose that $x$ and $y$ are distinct elements of $\mathbb{R}_\infty$ that are not adjacent vertices of $\mathcal{F}_q$. Then there are two vertices $u$ and $v$ of some face of $\mathcal{F}_q$ such that $x$ and $y$ lie in distinct components of $\mathbb{R}_\infty\setminus\{u,v\}$.
\end{lemma}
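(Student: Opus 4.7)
The plan is to work with the hyperbolic line $\ell$ in $\mathbb{H}$ joining $x$ to $y$, and to exploit the tessellation of $\mathbb{H}\cup\mathbb{R}_\infty$ by the closed ideal $q$-gon faces of $\mathcal{F}_q$, recalling that each such closed face meets $\mathbb{R}_\infty$ precisely in its $q$ ideal vertices. Note that $\ell$ cannot coincide with an edge of $\mathcal{F}_q$, as then $x$ and $y$ would be adjacent, so any meeting of $\ell$ with an edge is transverse and occurs at a single interior point of $\mathbb{H}$. I would split into two cases according to whether $\ell$ meets some edge of $\mathcal{F}_q$ or not.

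In the first case, $\ell$ meets some edge $e$ with endpoints $u$ and $v$; these are adjacent vertices of a common face. The set $\{u\}\cup e\cup\{v\}$ is a Jordan arc in $\mathbb{H}\cup\mathbb{R}_\infty$ separating it into two components, and $\ell$ passes from one component to the other as it crosses $e$ transversely, so its ideal endpoints $x$ and $y$ lie in different components. Restricting to $\mathbb{R}_\infty$ gives the two components of $\mathbb{R}_\infty\setminus\{u,v\}$, as required.

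In the second case, $\ell$ meets no edge of $\mathcal{F}_q$. By connectedness of $\ell$, and the fact that the face interiors are the connected components of $\mathbb{H}$ with the union of all edges removed, $\ell$ lies in the closure $\overline{P}$ of a single face $P$. Since $\overline{P}\cap\mathbb{R}_\infty$ is exactly the vertex set of $P$, both $x$ and $y$ are vertices of $P$, and by hypothesis they are non-adjacent in $P$; in particular $q\geq 4$. I then take $u$ and $v$ to be the two neighbours of $x$ in $P$, which are distinct from $y$ because $y$ is not adjacent to $x$. The short arc of $\mathbb{R}_\infty$ between $u$ and $v$ that contains $x$ cannot contain $y$, while the complementary arc does contain $y$ (since $y$ is a vertex of $P$ different from both $u$ and $v$), so $x$ and $y$ lie in different components of $\mathbb{R}_\infty\setminus\{u,v\}$. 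The only step likely to need a little care is the single-face containment in the second case, but this follows immediately from the description of the face interiors as the connected components of $\mathbb{H}$ minus the union of all edges of $\mathcal{F}_q$.
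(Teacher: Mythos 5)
Your proof is correct, but it is organised differently from the paper's. The paper splits into three cases according to whether both, one, or neither of $x$ and $y$ is a vertex of $\mathcal{F}_q$: when both are vertices it simply quotes Lemma~\ref{lemma 2} (the existence of $y$-parents), when exactly one is a vertex it normalises that vertex to $\infty$ and takes $u,v$ to be the consecutive integer multiples of $\lambda_q$ straddling $y$, and when neither is a vertex it observes, as you do, that the hyperbolic line from $x$ to $y$ must cross an edge. You instead dichotomise on whether the geodesic $\ell$ from $x$ to $y$ meets an edge, which lets the edge-crossing separation argument handle all three of the paper's cases at once, and reduces the remaining case to the situation where $x$ and $y$ are non-adjacent vertices of a single face, dealt with by hand via the two neighbours of $x$ in that face. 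What your route buys is uniformity and self-containedness: you never invoke Lemma~\ref{lemma 2} or the explicit description of the neighbours of $\infty$, only the facts that the closed faces tile $\mathbb{H}\cup\mathbb{R}_\infty$, that a closed ideal $q$-gon meets $\mathbb{R}_\infty$ exactly in its vertices, and that two distinct geodesics crossing in $\mathbb{H}$ have interlocking endpoint pairs. What the paper's route buys is brevity, since Lemma~\ref{lemma 2} is already available and does the hardest case in one line. The one step in your argument deserving the care you flag --- that $\ell\cap\mathbb{H}$, being connected and disjoint from every edge, lies in a single open face --- is sound because the open faces are pairwise disjoint open sets whose union is $\mathbb{H}$ minus the edges, so a connected set avoiding all edges cannot meet two of them.
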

\begin{proof}
If $x$ and $y$ are both vertices of $\mathcal{F}_q$, then this assertion follows from Lemma~\ref{lemma 2}. If one of $x$ and $y$ is a vertex of $\mathcal{F}_q$ (say $x$) and the other is not, then after applying an element of $G_q$ we may assume that $x=\infty$, in which case we can choose $u$ and $v$  to be the integer multiples of $\lambda_q$ that lie either side of $y$ on the real line. Suppose finally that neither $x$ nor $y$ are vertices of $\mathcal{F}_q$. The hyperbolic line from $x$ to $y$ must intersect an edge of $\mathcal{F}_q$ (else this hyperbolic line disconnects $\mathcal{F}_q$) and the end points of this edge are the required vertices $u$ and $v$.
\end{proof}

\begin{proof}[Proof of Theorem~\ref{theorem 13}]
We prove the contrapositive of Theorem~\ref{theorem 13}. Suppose that the sequence of convergents $v_1,v_2,\dotsc$ of an infinite Rosen continued fraction diverges. Then this sequence has two distinct limit points, $x$ and $y$. Assume for the moment that $x$ and $y$ are not adjacent vertices of $\mathcal{F}_q$. Then Lemma~\ref{lemma 8} tells us that there are two vertices $u$ and $v$ of some face of $\mathcal{F}_q$ such that $x$ and $y$ lie in distinct components of $\mathbb{R}_\infty\setminus\{u,v\}$. Since $x$ and $y$ are both limit points of the sequence $v_1,v_2,\dotsc$, it follows from Lemma~\ref{lemma 6} that the sequence either contains infinitely many terms equal to $u$ or else it contains infinitely many terms equal to $v$.

Suppose now that $x$ and $y$ are adjacent vertices of $\mathcal{F}_q$, which, after applying an element of $G_q$, we can assume are $0$ and $\infty$. Since the sequence of convergents accumulates at $0$ it contains infinitely many terms inside one of the intervals $[-\lambda_q,0]$ or $[0,\lambda_q]$, and it also contains infinitely many terms that lie outside the union of these two intervals. It follows from Lemma~\ref{lemma 6} that the sequence contains infinitely many equal terms, all equal to one of $-\lambda_q$, $0$, or $\lambda_q$.
\end{proof}

Let us now discuss infinite \emph{geodesic} Rosen continued fractions, which were defined in the introduction. An infinite path $\langle v_0,v_1,\dotsc\rangle$  in $\mathcal{F}_q$ is said to be a   \emph{geodesic path} if $\langle v_0,v_1,\dotsc,v_n\rangle$ is a geodesic path for each positive integer $n$. It follows that $[b_1,b_2,\dots]_q$ is a geodesic Rosen continued fraction if and only if its path of convergents is a geodesic path. We can determine whether $[b_1,b_2,\dots]_q$ is a geodesic Rosen continued fraction  using Theorems~\ref{theorem 3} and~\ref{theorem 12}.

The next observation is an immediate corollary of Theorem~\ref{theorem 13}.

\begin{corollary}\label{corollary 4}
Every infinite geodesic Rosen continued fraction converges.
\end{corollary}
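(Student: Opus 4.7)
The plan is to derive Corollary~\ref{corollary 4} directly from Theorem~\ref{theorem 13} by arguing the contrapositive: if the sequence of convergents of an infinite geodesic Rosen continued fraction had infinitely many equal terms, then some truncation of the continued fraction would fail to be geodesic, contradicting the definition of a geodesic infinite Rosen continued fraction.

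In more detail, let $[b_1,b_2,\dotsc]_q$ be an infinite geodesic Rosen continued fraction with path of convergents $\langle \infty,v_1,v_2,\dotsc\rangle$. By definition each finite prefix $\langle \infty,v_1,\dots,v_m\rangle$ is a geodesic path in $\mathcal{F}_q$. Suppose for contradiction that infinitely many convergents are equal to some common vertex $y$, and pick indices $n_1<n_2$ with $v_{n_1}=v_{n_2}=y$. Then $\langle \infty,v_1,\dots,v_{n_1}\rangle$ is a path from $\infty$ to $y$ of length $n_1$, so $d_q(\infty,y)\leq n_1<n_2$. Hence $\langle \infty,v_1,\dots,v_{n_2}\rangle$ is a path from $\infty$ to $y$ whose length $n_2$ strictly exceeds $d_q(\infty,y)$, contradicting the fact that it is a geodesic path.

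Consequently no vertex appears infinitely often among the convergents, so the hypothesis of Theorem~\ref{theorem 13} is satisfied and the continued fraction converges. I do not anticipate any real obstacle here, since the only ingredient beyond Theorem~\ref{theorem 13} is the elementary observation that a geodesic path cannot revisit any vertex.
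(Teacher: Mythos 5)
Your proof is correct and matches the paper's intended argument: the paper simply calls Corollary~\ref{corollary 4} an immediate consequence of Theorem~\ref{theorem 13}, the implicit point being exactly your observation that a geodesic path of convergents cannot revisit a vertex, so no convergent value can recur infinitely often.
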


Earlier we proved that, given vertices $x$ and $y$ of $\mathcal{F}_q$, we can construct a geodesic path from $x$ to $y$ by iterating the map $\alpha_y$. In fact, we can define $\alpha_y$ even when $y$ is not a vertex of $\mathcal{F}_q$. In this case, for any vertex $x$, the iterates $x, \alpha_y(x), \alpha^2_y(x),\dotsc$ form an infinite path in $\mathcal{F}_q$. We can see that this is a geodesic path because, for any positive integer $n$, we can choose a vertex $y_0$ of $\mathcal{F}_q$ that is sufficiently close to $y$ in the spherical metric on $\mathbb{R}_\infty$ that the two paths $\langle x,\alpha_y(x),\dots,{\alpha_y}^n(x)\rangle$ and $\langle x,\alpha_{y_0}(x),\dots,\alpha_{y_0}^n(x)\rangle$ are identical, and we know that the latter is a geodesic path.  Corollary~\ref{corollary 4} now tells us that the sequence $x, \alpha_y(x), \alpha^2_y(x),\dotsc$ converges in $\mathbb{R}_\infty$, and a short argument that  we omit shows that the limit must be $y$. This gives us the following theorem.

% "short argument" : suppose sequence converges to $z\neq y$. choose adjacent vertices $u$ and $v$ such that $x$ and $y$ lie in one component of $\mathbb{R}_\infty\setminus\{u,v\}$ and $z$ lies in the other (ie, choose u and v are very close to z). Let P be one of the polygons incident to [u,v]; the smaller one around z. If we choose y_0 close to y (y_0 also lies in same component of $\mathbb{R}_\infty\setminus\{u,v\}$ as y) then P doesn't appear in q-chain from x to y_0. Therefore alpha iterates of x can never hit u or v, so they can never get to z.

\begin{theorem}\label{theorem 14}
Given a vertex $x$ of $\mathcal{F}_q$ and a real number $y$ that is not a vertex of $\mathcal{F}_q$, the infinite path $\langle x,\alpha_y(x),{\alpha_y}^2(x)\dots,\rangle$ is a geodesic path in $\mathcal{F}_q$ which converges in $\mathbb{R}_\infty$ to $y$.
\end{theorem}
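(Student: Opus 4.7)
The proof has three components, sketched in the paragraph immediately preceding the theorem: (a) verifying that $\alpha_y$ makes sense when $y$ is not a vertex of $\mathcal{F}_q$, (b) showing the infinite path of iterates is a geodesic path, and (c) showing the sequence converges in $\mathbb{R}_\infty$ and identifying the limit with $y$.

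For (a), the content of Lemma~\ref{lemma 2} extends with no essential change. After mapping $x$ to $\infty$ by an element of $G_q$, a non-vertex $y$ lies in a unique open interval $(b\lambda_q, (b+1)\lambda_q)$, pinning down $P_y(x)$ as the $\tau^b$-translate of the fundamental domain $E$. The subsequent labeling distinguishing $\alpha_y$ from $\beta_y$ depends only on which side of the vertex or edge opposite $x$ the point $y$ lies on, and carries over verbatim.

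For (b), I would follow the approximation argument outlined before the theorem. Fix $n \geq 1$. Each of the iterates $\alpha_y^k(x)$ for $k \leq n$ is determined by finitely many ``which-side'' decisions, each of which is the condition that $y$ lie in a specified open arc of $\mathbb{R}_\infty$. Since the vertices of $\mathcal{F}_q$ are dense in $\mathbb{R}_\infty$, I can pick a vertex $y_0$ in the intersection of these arcs and disjoint from the finite set $\{\alpha_y^k(x) : 0 \leq k \leq n\}$. Then $\alpha_{y_0}^k(x) = \alpha_y^k(x)$ for all $k \leq n$ and $d_q(x, y_0) > n$. By Corollary~\ref{corollary 2}, the full iteration $x, \alpha_{y_0}(x), \ldots, \alpha_{y_0}^{d_q(x,y_0)}(x) = y_0$ is a finite geodesic, so its length-$n$ initial segment is also geodesic and coincides with ours.

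For (c), Corollary~\ref{corollary 4} gives convergence of the iterates to some $z \in \mathbb{R}_\infty$, and I would identify $z$ with $y$ by contradiction. If $z \neq y$, choose disjoint open arcs $U \ni y$ and $V \ni z$ and fix $k_0$ such that $\alpha_y^k(x) \in V$ for $k > k_0$. Since every graph-ball of finite radius in $\mathcal{F}_q$ contains only finitely many vertices, $d_q(x, y_0) \to \infty$ as $y_0$ approaches $y$ through vertices of $\mathcal{F}_q$. Extending the open-condition analysis from (b), I can choose $y_0 \in U$ close enough to $y$ that $m := d_q(x, y_0) > k_0$ and that $\alpha_{y_0}^k(x) = \alpha_y^k(x)$ for all $k \leq m$. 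Then $\alpha_y^m(x) = \alpha_{y_0}^m(x) = y_0 \in U$, but $m > k_0$ also forces $\alpha_y^m(x) \in V$, giving the required contradiction. The main obstacle lies in this last step: arranging the two conditions ``$m > k_0$'' and ``the first $m$ iterates agree'' simultaneously, since $m$ itself depends on $y_0$. The cleanest way to handle it is to reinterpret the agreement condition as the requirement that $y_0$ lie in a suitably deeply nested arc of the (infinite) $q$-chain from $x$ to $y$; density of vertices of $\mathcal{F}_q$ together with finiteness of graph-balls then guarantees such $y_0$ exist arbitrarily close to $y$.
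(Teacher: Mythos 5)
Your parts (a) and (b), together with the appeal to Corollary~\ref{corollary 4} for convergence, are exactly the paper's own (sketched) argument: the paper also justifies the geodesic property by choosing a vertex $y_0$ so close to $y$ that the first $n$ iterates of $\alpha_{y_0}$ and $\alpha_y$ coincide, and then invokes Corollary~\ref{corollary 2}; your observation that one should also arrange $d_q(x,y_0)>n$ is a worthwhile refinement of a point the paper glosses over. The paper, however, explicitly omits the identification of the limit with $y$ (``a short argument that we omit''), so that step is where your proposal carries all of the new weight, and it is there that you have a genuine gap.

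Twice in part (c) you rely on the claim that every graph-ball of finite radius in $\mathcal{F}_q$ contains only finitely many vertices. This is false: $\mathcal{F}_q$ has infinite valency (the neighbours of $\infty$ are all integer multiples of $\lambda_q$), so already the ball of radius $1$ about any vertex is infinite. The fact you want, that $d_q(x,y_0)\to\infty$ as vertices $y_0$ approach the non-vertex $y$, is true, but it must come from the separating structure of the infinite $q$-chain $P_1,P_2,\dotsc$ from $x$ towards $y$: by Lemma~\ref{lemma 6}, any path from $x$ to a vertex lying in the arc $J_i$ cut off by the $i$-th pairwise-disjoint separating edge $\{a_{n_i},b_{n_i}\}$ must contain a vertex of each of the first $i$ such edges, hence has length at least $i$. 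The same structure gives a cleaner identification of the limit that avoids your delicate coupling of ``$m>k_0$'' with ``the first $m=d_q(x,y_0)$ iterates agree'': if the endpoints $a_{n_i},b_{n_i}$ did not both converge to $y$, the nonempty open arc between $y$ and their limit would lie in every $J_i$ and, by density, would contain a vertex $w$ with $d_q(x,w)\geq i$ for all $i$, which is absurd; hence $\overline{J_i}$ shrinks to $\{y\}$. Since the iterates $\alpha_y^{\,k}(x)$ lie in the $q$-chain (Lemma~\ref{lemma 4} extends to non-vertex $y$) and only finitely many of them can lie in $P_1\cup\dots\cup P_{n_i}$, all but finitely many lie in $\overline{J_i}$, so the limit $z$ lies in $\bigcap_i\overline{J_i}=\{y\}$. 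As written, both your assertion that $d_q(x,y_0)\to\infty$ and your final nested-arc gluing rest on the false finiteness premise, so the argument does not stand without this repair.
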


We can now complete the proof of Theorem~\ref{theorem 1}, the first part of which was proved in Section~\ref{section 3}.

\begin{proof}[Proof of Theorem~\ref{theorem 1}: infinite case]
The path $\langle \infty,\alpha_\infty(y),{\alpha_\infty}^2(y)\dots,\rangle$ is the path of convergents of the Rosen continued fraction expansion of $y$ obtained by applying the nearest-integer algorithm. We have seen this already when $y$ is a vertex of $\mathcal{F}_q$, and the same is true when $y$ is not a vertex. Theorem~\ref{theorem 14} says that this path is a geodesic path, so we have proved  Theorem~\ref{theorem 1} for infinite  Rosen continued fractions.
\end{proof}

We finish here with an example to show that a real number may have infinitely many infinite geodesic Rosen continued fraction expansions. In our example $q=4$, but there are similar examples for other values of $q$. The simplest way to describe the example is using the infinite $q$-chain suggested by Figure~\ref{figure 22}. The number $y$ is equal to $[2,2,\dots]_4$, and we can see from the infinite $q$-chain that there are infinitely many geodesic paths from $\infty$ to $y$.

\begin{figure}[ht]
\centering
\includegraphics{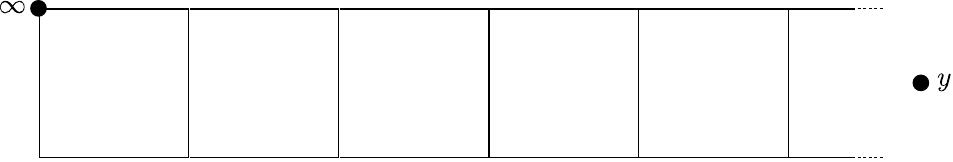}
\caption{There are infinitely many geodesic paths from $\infty$ to $y$.}
\label{figure 22}
\end{figure}

\section{The theta group}\label{section 9}

The Hecke groups $G_q$ are in fact only a countable collection from a larger class of Fuchsian groups that are also known as Hecke groups. To describe this class, let
\[
\sigma(z)=-\frac{1}{z}\quad\text{and}\quad \tau(z)=z+\lambda,
\]
where $\lambda>0$. Hecke proved (see \cite{Ev1973}) that the group generated by $\sigma$ and $\tau$ is discrete if and only if either $\lambda=2\cos (\pi/q)$, $q=3,4,\dotsc$, or $\lambda\geq 2$. The groups with $\lambda<2$ are the Hecke groups $G_q$. In this section we discuss (without details) the group with $\lambda=2$, which we denote by $G_\infty$ because $2\cos(\pi/q)\to 2$ as $q\to\infty$. This group is commonly known as the \emph{theta group}. The Hecke groups with $\lambda>2$ are those Hecke groups  of the second kind (their limit sets are not dense in $\mathbb{R}_\infty$). The Farey graphs for these groups are similar to that of $G_\infty$, and we do not consider these groups any further.

The theta group $G_\infty$ can be described explicitly as the collection of those M\"obius transformations $z\mapsto (az+b)/(cz+d)$, where $a,b,c,d\in\mathbb{Z}$ and $ad-bc=1$, such that
\[
\begin{pmatrix}a & b \\ c& d\end{pmatrix}\equiv \begin{pmatrix}1 & 0 \\ 0& 1\end{pmatrix}\,\text{or}\,\begin{pmatrix}0 & 1 \\ 1& 0\end{pmatrix} \pmod{2}
\]
(see \cite[Corollary~4]{Kn1970}). It is a subgroup of index 3 in the modular group $G_3$. We can define a Farey graph $\mathcal{F}_\infty$ for $G_\infty$ just as we did for the Hecke groups $G_q$ (either by using the fundamental domain of a suitable normal subgroup of $G_\infty$ or using the orbit of the hyperbolic line between $0$ and $\infty$ under $G_\infty$). The Farey graph $\mathcal{F}_\infty$ is illustrated in Figure~\ref{figure 20}.

\begin{figure}[ht]
\centering
\includegraphics{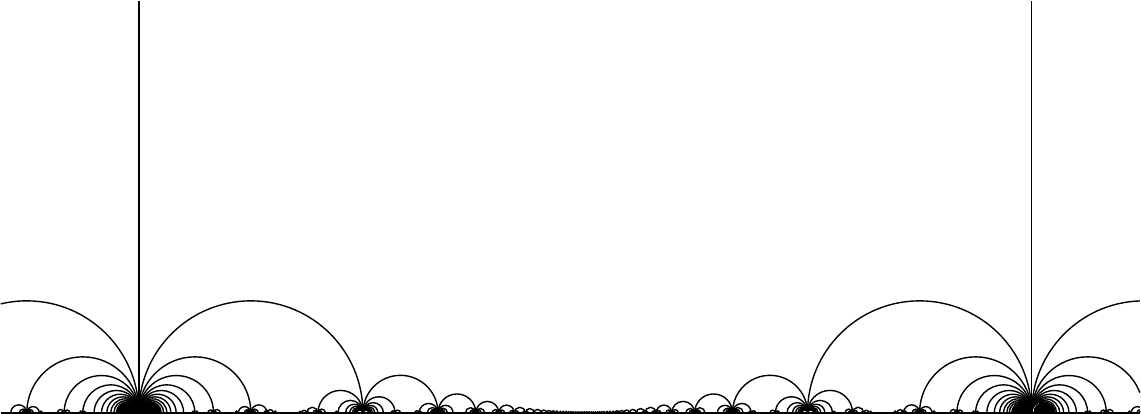}
\caption{The Farey graph $\mathcal{F}_\infty$}
\label{figure 20}
\end{figure}

Unlike the Farey graphs we have met so far, the graph $\mathcal{F}_\infty$ is a tree. Since we know what the elements of $G_\infty$ are explicitly, we can determine the vertices of $\mathcal{F}_\infty$: they are $\infty$ and rational numbers $a/b$, where $a$ and $b$ are coprime integers such that one of them is odd and the other is even. Two vertices $a/b$ and $c/d$ are connected by an edge if and only if $|ad-bc|=1$. From this we see that $\mathcal{F}_\infty$ is a subgraph of the usual Farey tessellation $\mathcal{F}_3$.

Since $\lambda=2$, a Rosen continued fraction of the form
\[
b_1\lambda +\cfrac{-1}{b_2\lambda
          + \cfrac{-1}{b_3\lambda
          +\cfrac{-1}{\raisebox{-1ex}{$\dotsb + b_n\lambda$}}}}\,,
\]
is just a continued fraction with coefficients that are even integers. We denote it by $[b_1,\dots,b_n]_\infty$. Continued fractions with even-integer coefficients have been studied elsewhere; see, for example, \cite{KrLo1996}. These continued fractions correspond to paths in $\mathcal{F}_\infty$ in the same way that Rosen continued fractions $[b_1,\dots,b_n]_q$ correspond to paths in $\mathcal{F}_q$. Since $\mathcal{F}_\infty$ is a tree we immediately obtain the following strong characterisation of geodesic paths.

\begin{theorem}\label{theorem 11}
Each vertex of $\mathcal{F}_\infty$ has a unique geodesic Rosen continued fraction expansion. Furthermore, the continued fraction $[b_1,\dots,b_n]_\infty$ is a geodesic Rosen continued fraction if and only if $b_i\neq0$ for $i=2,\dots,n$.
\end{theorem}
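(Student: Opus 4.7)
The plan is to exploit the key geometric fact, already highlighted in the excerpt, that $\mathcal{F}_\infty$ is a tree. In any tree, between two distinct vertices there is a unique simple path, and a walk from $u$ to $v$ is the (necessarily unique) geodesic path if and only if it does not backtrack. Indeed, if a walk $v_0,v_1,\dots,v_n$ that never satisfies $v_{i-2}=v_i$ were to repeat a vertex, say $v_i=v_j$ with $j-i>0$ minimal, then $v_i,v_{i+1},\dots,v_j$ would be a non-trivial closed non-backtracking walk, which is impossible in a cycle-free graph. Hence a non-backtracking walk in $\mathcal{F}_\infty$ is automatically a simple path, and therefore the unique geodesic from its initial to its terminal vertex.

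Next, I would appeal to the correspondence of Theorem~\ref{theorem 4}: its statement and proof apply verbatim to $\mathcal{F}_\infty$ (the generators $\sigma$ and $\tau$ of $G_\infty$ play exactly the same role they do for $G_q$), so Rosen continued fraction expansions of a vertex $y$ of $\mathcal{F}_\infty$ are in bijection with finite paths $\langle \infty,v_1,\dots,v_n\rangle$ from $\infty$ to $y$ in $\mathcal{F}_\infty$. A geodesic Rosen continued fraction expansion of $y$ therefore corresponds to a geodesic path from $\infty$ to $y$, and the uniqueness of geodesic paths in the tree $\mathcal{F}_\infty$ immediately gives the first assertion.

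For the second assertion, I would use Lemma~\ref{lemma 14}, which (again, because its proof is purely formal) says that the path of convergents $\langle \infty,v_1,\dots,v_n\rangle$ of $[b_1,\dots,b_n]_\infty$ satisfies $v_{i-2}=v_i$ for some $i$ with $2\le i\le n$ if and only if the corresponding $b_i$ equals $0$. Combined with the tree characterisation from the first paragraph, the path of convergents is a geodesic path if and only if it does not backtrack, i.e.\ if and only if $b_i\neq 0$ for every $i=2,\dots,n$. This is exactly the stated criterion.

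There is no serious obstacle here: the whole argument rests on the tree property of $\mathcal{F}_\infty$ together with two results (Theorem~\ref{theorem 4} and Lemma~\ref{lemma 14}) whose proofs transfer to the theta-group setting without modification. The only point requiring a brief verification is the equivalence \emph{non-backtracking $=$ geodesic} in a tree, which is a standard fact and was dispatched in the first paragraph.
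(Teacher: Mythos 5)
Your proposal is correct and follows exactly the route the paper intends: the paper simply asserts that the theorem is immediate from the fact that $\mathcal{F}_\infty$ is a tree, and your argument fills in precisely the details (uniqueness of geodesics in a tree, the path correspondence of Theorem~\ref{theorem 4}, and the backtracking criterion of Lemma~\ref{lemma 14}) that the authors leave implicit. No gaps.
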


To find the unique geodesic Rosen continued fraction expansion of a vertex $x$ of $\mathcal{F}_\infty$, you can apply the nearest-\emph{even}-integer algorithm to $x$. This algorithm will also give you a geodesic expansion when $x$ is not a vertex of $\mathcal{F}_\infty$; however, in this case there may be other geodesic expansions of $x$. For example,
\[
1=2 +\cfrac{-1}{2
          + \cfrac{-1}{2
          +\dotsb}}
= \cfrac{-1}{-2
          + \cfrac{-1}{-2
          +\dotsb}}\, .
\]

\begin{bibdiv}
\begin{biblist}

\bib*{ArSc2009}{article}{
   author={Arnoux, P.},
   author={Schmidt, T. A.},
   title={Veech surfaces with nonperiodic directions in the trace field},
   journal={J. Mod. Dyn.},
   volume={3},
   date={2009},
   number={4},
   pages={611--629},
}

\bib{ArSc2013}{article}{
   author={Arnoux, P.},
   author={Schmidt, T. A.},
   title={Cross sections for geodesic flows and $\alpha$-continued
   fractions},
   journal={Nonlinearity},
   volume={26},
   date={2013},
   number={3},
   pages={711--726},
}

\bib{BeHoSh2012}{article}{
   author={Beardon, A. F.},
   author={Hockman, M.},
   author={Short, I.},
   title={Geodesic continued fractions},
   journal={Michigan Math. J.},
   volume={61},
   date={2012},
   number={1},
   pages={133--150},
}

\bib{BeKn2008}{book}{
   author={Berndt, B. C.},
   author={Knopp, M. I.},
   title={Hecke's theory of modular forms and Dirichlet series},
   series={Monographs in Number Theory},
   volume={5},
   publisher={World Scientific Publishing Co. Pte. Ltd., Hackensack, NJ},
   date={2008},
   pages={xii+137},
}

\bib{BuHuSc2013}{article}{
   author={Bugeaud, Y.},
   author={Hubert, P.},
   author={Schmidt, T.},
   title={Transcendence with Rosen continued fractions},
   journal={J. Eur. Math. Soc. (JEMS)},
   volume={15},
   date={2013},
   number={1},
   pages={39--51},
}

\bib{BuKrSc2000}{article}{
   author={Burton, R. M.},
   author={Kraaikamp, C.},
   author={Schmidt, T. A.},
   title={Natural extensions for the Rosen fractions},
   journal={Trans. Amer. Math. Soc.},
   volume={352},
   date={2000},
   number={3},
   pages={1277--1298},
}

\bib{CoGrKuZi1998}{book}{
   author={Collins, D. J.},
   author={Grigorchuk, R. I.},
   author={Kurchanov, P. F.},
   author={Zieschang, H.},
   title={Combinatorial group theory and applications to geometry},
   note={Translated from the 1990 Russian original by P. M. Cohn;
   Reprint of the original English edition from the series Encyclopaedia of
   Mathematical Sciences [{\it Algebra. VII}, Encyclopaedia Math. Sci., 58,
   Springer, Berlin, 1993;  MR1265269 (95g:57004)]},
   publisher={Springer-Verlag, Berlin},
   date={1998},
   pages={vi+240},
}

\bib{DaKrSt2009}{article}{
   author={Dajani, K.},
   author={Kraaikamp, C.},
   author={Steiner, W.},
   title={Metrical theory for $\alpha$-Rosen fractions},
   journal={J. Eur. Math. Soc. (JEMS)},
   volume={11},
   date={2009},
   number={6},
   pages={1259--1283},
}

\bib{Ev1973}{article}{
   author={Evans, R.},
   title={A fundamental region for Hecke's modular group},
   journal={J. Number Theory},
   volume={5},
   date={1973},
   pages={108--115},
}

\bib*{HaMeToYu2008}{article}{
   author={Hanson, E.},
   author={Merberg, A.},
   author={Towse, C.},
   author={Yudovina, E.},
   title={Generalized continued fractions and orbits under the action of
   Hecke triangle groups},
   journal={Acta Arith.},
   volume={134},
   date={2008},
   number={4},
   pages={337--348},
}

\bib{IvSi2005}{article}{
   author={Ivrissimtzis, I.},
   author={Singerman, D.},
   title={Regular maps and principal congruence subgroups of Hecke groups},
   journal={European J. Combin.},
   volume={26},
   date={2005},
   number={3-4},
   pages={437--456},
}

\bib{JoSiWi1991}{article}{
   author={Jones, G. A.},
   author={Singerman, D.},
   author={Wicks, K.},
   title={The modular group and generalized Farey graphs},
   conference={
      title={Groups---St. Andrews 1989, Vol. 2},
   },
   book={
      series={London Math. Soc. Lecture Note Ser.},
      volume={160},
      publisher={Cambridge Univ. Press, Cambridge},
   },
   date={1991},
   pages={316--338},
}

\bib*{Ka2003}{article}{
   author={Katok, S.},
   title={Continued fractions, hyperbolic geometry and quadratic forms},
   conference={
      title={MASS selecta},
   },
   book={
      publisher={Amer. Math. Soc., Providence, RI},
   },
   date={2003},
   pages={121--160},
}

\bib{KaUg2007}{article}{
   author={Katok, S.},
   author={Ugarcovici, I.},
   title={Symbolic dynamics for the modular surface and beyond},
   journal={Bull. Amer. Math. Soc. (N.S.)},
   volume={44},
   date={2007},
   number={1},
   pages={87--132},
}

\bib{Kn1970}{book}{
   author={Knopp, M. I.},
   title={Modular functions in analytic number theory},
   publisher={Markham Publishing Co., Chicago, Ill.},
   date={1970},
   pages={x+150},
}

\bib*{Kr1991}{article}{
   author={Kraaikamp, C.},
   title={A new class of continued fraction expansions},
   journal={Acta Arith.},
   volume={57},
   date={1991},
   number={1},
   pages={1--39},
}

\bib{KrScSm2010}{article}{
   author={Kraaikamp, C.},
   author={Schmidt, T. A.},
   author={Smeets, I.},
   title={Natural extensions for $\alpha$-Rosen continued fractions},
   journal={J. Math. Soc. Japan},
   volume={62},
   date={2010},
   number={2},
   pages={649--671},
}

\bib{KrLo1996}{article}{
   author={Kraaikamp, C.},
   author={Lopes, A.},
   title={The theta group and the continued fraction expansion with even
   partial quotients},
   journal={Geom. Dedicata},
   volume={59},
   date={1996},
   number={3},
   pages={293--333},
}

\bib{LaTr1995}{article}{
   author={Lagarias, J. C.},
   author={Tresser, C. P.},
   title={A walk along the branches of the extended Farey tree},
   journal={IBM J. Res. Develop.},
   volume={39},
   date={1995},
   number={3},
   pages={283--294},
}

\bib{Le1964}{book}{
   author={Lehner, J.},
   title={Discontinuous groups and automorphic functions},
   series={Mathematical Surveys, No. VIII},
   publisher={American Mathematical Society, Providence, R.I.},
   date={1964},
   pages={xi+425},
}

\bib{Le1985}{article}{
   author={Lehner, J.},
   title={Diophantine approximation on Hecke groups},
   journal={Glasgow Math. J.},
   volume={27},
   date={1985},
   pages={117--127},
}

\bib*{Le1974}{article}{
   author={Leutbecher, A.},
   title={\"Uber die Heckeschen Gruppen $G(\lambda )$. II},
   language={German},
   journal={Math. Ann.},
   volume={211},
   date={1974},
   pages={63--86},
}

\bib{MaMu2010}{article}{
   author={Mayer, D.},
   author={M{\"u}hlenbruch, T.},
   title={Nearest $\lambda\sb q$-multiple fractions},
   conference={
      title={Spectrum and dynamics},
   },
   book={
      series={CRM Proc. Lecture Notes},
      volume={52},
      publisher={Amer. Math. Soc., Providence, RI},
   },
   date={2010},
   pages={147--184},
}

\bib{MaMuSt2012}{article}{
   author={Mayer, D.},
   author={M{\"u}hlenbruch, T.},
   author={Str{\"o}mberg, F.},
   title={The transfer operator for the Hecke triangle groups},
   journal={Discrete Contin. Dyn. Syst.},
   volume={32},
   date={2012},
   number={7},
   pages={2453--2484},
}

\bib{MaSt2008}{article}{
   author={Mayer, D.},
   author={Str{\"o}mberg, F.},
   title={Symbolic dynamics for the geodesic flow on Hecke surfaces},
   journal={J. Mod. Dyn.},
   volume={2},
   date={2008},
   number={4},
   pages={581--627},
}

\bib{Mo1982}{article}{
   author={Moeckel, R.},
   title={Geodesics on modular surfaces and continued fractions},
   journal={Ergodic Theory Dynamical Systems},
   volume={2},
   date={1982},
   number={1},
   pages={69--83},
}

\bib{Na1995}{article}{
   author={Nakada, H.},
   title={Continued fractions, geodesic flows and Ford circles},
   conference={
      title={Algorithms, fractals, and dynamics},
      address={Okayama/Kyoto},
      date={1992},
   },
   book={
      publisher={Plenum, New York},
   },
   date={1995},
   pages={179--191},
}

\bib{Na2010}{article}{
   author={Nakada, H.},
   title={On the Lenstra constant associated to the Rosen continued
   fractions},
   journal={J. Eur. Math. Soc. (JEMS)},
   volume={12},
   date={2010},
   number={1},
   pages={55--70},
}

\bib{Pe1950}{book}{
   author={Perron, O.},
   title={Die Lehre von den Kettenbr\"uchen},
   language={German},
   note={2d ed},
   publisher={Chelsea Publishing Co., New York, N. Y.},
   date={1950},
   pages={xii+524},
}

\bib{Ro1954}{article}{
   author={Rosen, D.},
   title={A class of continued fractions associated with certain properly
   discontinuous groups},
   journal={Duke Math. J.},
   volume={21},
   date={1954},
   pages={549--563},
}

\bib{RoSc1992}{article}{
   author={Rosen, D.},
   author={Schmidt, T. A.},
   title={Hecke groups and continued fractions},
   journal={Bull. Austral. Math. Soc.},
   volume={46},
   date={1992},
   number={3},
   pages={459--474},
}

\bib{Sc2011}{book}{
   author={Schwartz, R. E.},
   title={Mostly surfaces},
   series={Student Mathematical Library},
   volume={60},
   publisher={American Mathematical Society, Providence, RI},
   date={2011},
   pages={xiv+314},
}

\bib{ScSh1995}{article}{
   author={Schmidt, T. A.},
   author={Sheingorn, M.},
   title={Length spectra of the Hecke triangle groups},
   journal={Math. Z.},
   volume={220},
   date={1995},
   number={3},
   pages={369--397},
}

\bib{ScSh1997}{article}{
   author={Schmidt, T. A.},
   author={Sheingorn, M.},
   title={Covering the Hecke triangle surfaces},
   journal={Ramanujan J.},
   volume={1},
   date={1997},
   number={2},
   pages={155--163},
}

\bib{Se1985}{article}{
   author={Series, C.},
   title={The modular surface and continued fractions},
   journal={J. London Math. Soc. (2)},
   volume={31},
   date={1985},
   number={1},
   pages={69--80},
}

\bib{Si1988}{article}{
   author={Singerman, D.},
   title={Universal tessellations},
   journal={Rev. Mat. Univ. Complut. Madrid},
   volume={1},
   date={1988},
   number={1-3},
   pages={111--123},
}

\end{biblist}
\end{bibdiv}

\end{document}